\documentclass[11pt, reqno]{amsart}
\usepackage{amsmath,amssymb}
\usepackage{iftex}
\usepackage{tabularray}
\usepackage{subcaption}
\usepackage{amsthm,amsmath,amssymb,amsfonts}
\usepackage{natbib}
\usepackage{txfonts}
\usepackage{booktabs}
\usepackage{float}
\usepackage{graphicx}
\usepackage{graphics}
\usepackage{ascmac}
\usepackage{bm}
\usepackage{accents}
\usepackage[colorlinks=true, allcolors=blue]{hyperref}
\usepackage{url}
\usepackage{algorithm}
\usepackage{algorithmic}
\usepackage{enumitem}

\newcommand{\R}{\mathbb{R}}

\newcommand{\N}{\mathbb{N}}

\newcommand{\bE}{\mathbb{E}}
\newcommand{\bP}{\mathbb{P}}

\renewcommand{\tilde}{\widetilde}
\renewcommand{\hat}{\widehat}
\renewcommand{\le}{\leqslant}
\renewcommand{\ge}{\geqslant}

\newcommand{\mC}{\mathcal{C}}

\newcommand{\mP}{\mathcal{P}}
\newcommand{\mS}{\mathcal{S}}

\newcommand{\mY}{\mathcal{Y}}

\newcommand{\deq}{\stackrel{d}{=}}
\newcommand{\1}{\mbox{1}\hspace{-0.25em}\mbox{l}}

\usepackage{color}

\usepackage{comment}

\newcommand{\obs}{\mathrm{obs}}
\newcommand{\rel}{\mathrm{rel}}
\newcommand{\rank}{\mathrm{rank}}

\DeclareMathOperator{\range}{range}
\DeclareMathOperator*{\argmax}{argmax}
\DeclareMathOperator*{\argmin}{argmin}

\newcommand{\Normal}{\mathrm{N}}
\newcommand{\Normalp}[2]{\Normal\left(#1,#2\right)}

\theoremstyle{definition}\newtheorem{problem}{Problem}[section]
\theoremstyle{definition}
\theoremstyle{remark}\newtheorem{assumption}{Assumption}

\theoremstyle{remark}\newtheorem{remark}[problem]{Remark}
\theoremstyle{definition}
\theoremstyle{plain}\newtheorem{theorem}[problem]{Theorem}
\theoremstyle{plain}
\theoremstyle{plain}\newtheorem{lemma}[problem]{Lemma}
\theoremstyle{plain}\newtheorem{proposition}[problem]{Proposition}
\theoremstyle{plain}
\theoremstyle{plain}

\keywords{repro samples method, selective inference, permuted regression, shuffled regression, unmatched data, Hungarian algorithms.}

	\title{Finite-Sample Inference for Sparsely Permuted Linear Regression}
    \author[H. Ota]{Hirofumi Ota}
    \address[H. Ota]{Graduate School of Arts and Sciences, The University of Tokyo.}
    \email{hirofumi-ota@g.ecc.u-tokyo.ac.jp}
    \author[M. Imaizumi]{Masaaki Imaizumi}
    \address[M. Imaizumi]{Graduate School of Arts and Sciences, The University of Tokyo. \\RIKEN Center for Advanced Intelligence Project.}
        \email{imaizumi@g.ecc.u-tokyo.ac.jp}
	\date{\today\  (first version)}

\begin{document}
\maketitle
\begin{abstract}
We study a linear observation model with an unknown permutation called \textit{permuted/shuffled linear regression}, where responses and covariates are mismatched and the permutation forms a discrete, factorial-size parameter. 
The permutation is a key component of the data-generating process, yet its statistical investigation remains challenging due to its discrete nature.
We develop a general statistical inference framework on the permutation and regression coefficients. First, we introduce a localization step that reduces the permutation space to a small candidate set building on recent advances in the repro samples method, whose miscoverage decays polynomially with the number of Monte Carlo samples. Then, based on this localized set, we provide statistical inference procedures: a conditional Monte Carlo test of permutation structures with valid finite-sample Type-I error control. We also develop coefficient inference that remains valid under alignment uncertainty of permutations. For computational purposes, we develop a linear assignment problem computable in polynomial time and demonstrate that, with high probability, the solution is equivalent to that of the conventional least squares with large computational cost. Extensions to partially permuted designs and ridge regularization are further discussed. Extensive simulations and an application to air-quality data corroborate finite-sample validity, strong power to detect mismatches, and practical scalability.
\end{abstract}

\section{Introduction}

\subsection{Background and motivation}

Modern data integration and privacy mechanisms often disrupt the one-to-one correspondence between covariates and responses.  
In this context, the \emph{shuffled/permuted linear regression model} \citep{abid2017linear, pananjady2017linear} serves as a fundamental regression framework that explicitly accounts for possible mismatches through an unknown permutation acting on the design or response indices.  
This model provides a canonical baseline for studying statistical inference under alignment uncertainty and has become a central object in the recent literature on regression with unobserved correspondences.

Such structures arise in settings involving anonymized or aggregated data \citep{narayanan2008robust}, probabilistic record linkage \citep{fellegi1969theory, christen2012evaluation, winkler2014matching}, and privacy-preserving or decentralized data collection protocols \citep{vatsalan2017privacy, dwork2014algorithmic}.  
In these situations, the pairings between predictor and response units may be only partially known or implicitly inferred, thereby motivating the study of regression models under alignment uncertainty.
We focus on the regime in which only a small fraction of the sample pairs are mismatched, often referred to as \emph{sparsely permuted} data \citep{slawski2019linear}.  
Even a small number of mismatches can distort parameter estimates and invalidate uncertainty quantification if ignored.

The appearance of sparsely permuted data motivates two core inferential questions: \textit{whether any permutation is present in the data, and how to construct confidence sets for the regression coefficients that remain valid under alignment uncertainty}.
Addressing these problems presents methodological challenges, because the permutation is discrete and combinatorial, and the search space grows factorially with the sample size, which undermines local asymptotics, smooth likelihood expansions, and tractable selective conditioning. We therefore adopt a nonasymptotic, finite-sample approach based on the \emph{repro samples method} \citep{xie2022repro, xie2024repro, wang2022finite} and the principle of \emph{locally simultaneous inference} \citep{zrnic2024locally}. At a high level, we localize the permutation space to a data-dependent candidate set and perform inference uniformly over that set; details are presented in Section \ref{sec:methodology}-\ref{sec:inference}.

\subsection{Problem statement}
We consider the linear regression problem with permuted data, where the observed response vector $\bm{Y}^{\obs} \in \R^n$ is paired with an $n \times p$ deterministic design matrix $\bm{X}$ with full column rank $\rank(\bm{X})=p$. The model can be described as 
\begin{equation}\label{eq:model intro}
    \bm{Y}^{\obs}  = \Pi_0 \bm{X} \bm{\beta}_{0} + \sigma_0 \bm{u}^{\rel},
\end{equation}
where $\bm{\beta}_{0} \in \R^p$ is an unknown coefficient vector, $\Pi_0$ is an unknown $n \times n$ permutation matrix, and $\bm{u}^{\rel}$ is a realization from $\bm{U} \sim \Normal(0, I_n)$, which is the noise vector associated with the observation $\bm{Y}^{\obs}$. The scale parameter $\sigma_0 > 0$ is also unknown. Under the frequentist framework, the statistical model in \eqref{eq:model intro} can also be described as the following random-sample version of the data-generating process
\begin{equation}\label{eq:random}
    \bm{Y}  = \Pi_0 \bm{X} \bm{\beta}_{0} + \sigma_0 \bm{U}, \qquad \bm{U} \sim \Normal(0, I_n).
\end{equation}

We further assume that the number of mismatches is at most $k$. Such situation is referred to as sparsely permuted data first appeared in \cite{slawski2019linear}, indicating that the permutation affects only a small fraction of all response-predictor pairs. 
To quantify the number of mismatches, 
we define the Hamming distance between an identity matrix $I_n$ and a $n \times n$ permutation matrix as 
\begin{equation}
d(\Pi, I_n) = \sum_{i = 1}^{n} \1\{\Pi_{ii} = 0\},
\end{equation}
where $\Pi_{ii}$ denotes the $i$-th diagonal element of $\Pi$, and focus on the $k$-sparse permutation matrix that satisfies $d(\Pi, I_n) \le k$. Furthermore, we denote the set of $k$-sparse permutation matrices as 
\begin{equation}
    \mP_{n,k}=\{\Pi\in\mP_n:d(\Pi,I_n)\le k\}.
\end{equation}

Based on the canonical setting, this paper addresses two goals of the statistical inference:
\begin{itemize}
\item[(G1)] \textbf{Testing for mismatches.}  
Construct a finite-sample valid test of the composite hypothesis $H_0: d(\Pi_0,I_n)\le k_0$ (equivalently, $\Pi_0\in\mP_{n,k_0}$) versus $H_1: d(\Pi_0,I_n)>k_0$ (equivalently, $\Pi_0\in\mP_{n,k}\setminus\mP_{n,k_0}$). The special case $k_0=0$ tests whether any mismatch is present.
\item[(G2)] \textbf{Confidence sets for $\bm{\beta}_0$ under unknown $\Pi_0$.}  
Build a confidence set for $\bm{\beta}_0$ that accounts for the uncertainty in estimating $\Pi_0$ and attains nominal coverage in finite samples.
\end{itemize}

\subsection{Overview of contributions}
We develop a methodology for statistical inference toward the above goals, which consists of two stages: generating candidate sets and conducting inference based on them.
In the first step, we generate a candidate set of plausible permutations based on the repro samples method \citep{xie2022repro,xie2024repro}. We generate artificial noise $L$ times, and for each noise draw, we optimize an objective function that mimics the least-squares objective under the corresponding noise realization, and then collect the resulting optimizers into a data-dependent candidate set $\mathcal{C}^{(L)}\subset\mathcal{P}_{n,k}$. This localized domain is then used to conduct coefficient inference by taking union of projection-based $F$-statistics over $\mathcal{C}^{(L)}$, which yields locally simultaneous confidence regions for $\bm{\beta}_0$.
In the second step, we develop a test for the hypothesis about the permutation structure: $H_0:\Pi_0\in\mP_{n,k_0}\quad\text{vs.}\quad H_1:\Pi_0\in\mP_{n,k}\setminus\mP_{n,k_0}$. We construct a discrete statistic based on $\mC^{(L)}$ and calibrate it via conditional Monte Carlo uniformly over the localized null set $\hat{\mP}_{n,k_0}:=\mC^{(L)}\cap\mP_{n,k_0}$. These steps achieve the two statistical inference goals (G1) and (G2).

We summarize the contributions of our study.
\begin{itemize}
    \item Our developed approach is theoretically guaranteed to be valid. For the generation of the candidate set in the first step with the localization, we prove a polynomial-rate bound on the miscoverage probability $\mathbb{P}(\Pi_0\notin\mathcal{C}^{(L)})$ in terms of the number of repro samples $L$ (Theorem~\ref{thm:recovery}). Also, we show the coverage of the confidence regions for $\bm{\beta}_0$ at the nominal level up to the localization error that vanishes at a polynomial rate in $L$ (Theorem~\ref{thm:confidence}). For testing, the developed statistical test controls type-I error at any sample size (Theorem~\ref{thm:p-value}) localized on the event $\{\Pi_0\in\mC^{(L)}\}$. Marginally, the size inflation is bounded by the miscoverage probability of $\mC^{(L)}$.
    \item We develop a computationally effective scheme to implement the method. In particular, we develop a score-weighted linear assignment problem (LAP) as a surrogate to the original least square problem for generating the candidate set. This surrogate problem is solved by the Hungarian algorithm with the computational complexity $O(n^3)$, which avoids the combinatorial explosion of the computational complexity of the original problem. We also prove the high-probability equivalence of the surrogate problem to the original problem (Theorem~\ref{thm:high-probability equivalence}). 
    \item Our methodology is comprehensively validated through both simulation and real-data analysis in an applicati   on to Beijing air-quality data. We also apply our approach  to the partially permuted designs and the permuted ridge regression model.
    Further, we propose selection methods for several necessary hyperparameters, and their proper functioning is verified experimentally.
\end{itemize}

\subsection{Related works}

\subsubsection{Shuffled regression}
Several recent methods use Hungarian/LAP solvers as computational surrogates for shuffled regression, e.g., spectral matching via covariance eigenbasis alignment \citep{liu2025shuffled} or LAP subproblems within alternating schemes \citep{li2021generalized}. 
Relatedly, local search over $k$-sparse permutations performs greedy swaps with monotone loss descent but offers no guarantee of reaching a global optimum in noisy settings \citep{mazumder2023linear}. 
In contrast, Theorem~\ref{thm:high-probability equivalence} provides a finite-sample certification: under explicit conditions, our score-weighted LAP attains the same minimizer as the repro-sample objective, and thus agrees with its global minimizer with high probability.

Research related to shuffled regression includes the estimation of shape-constrained models when the correspondence between covariates and responses is unknown.
For one-dimensional covariate, \cite{rigollet2019uncoupled} obtain minimax rates of an estimation error for uncoupled isotonic regression via minimum Wasserstein deconvolution. 
In multivariate settings, there are more studies.
For coordinate-wise isotonic regression with axis-wise permutations, \cite{pananjady2022isotonic} derive minimax and adaptation results and propose a polynomial-time partition estimator. \cite{slawski2024permuted} establish identification via cyclical monotonicity and develop optimal-transport/likelihood-based estimators with certain error bounds. Permutation uncertainty also appears in structured arrays: \cite{ma2021optimal} give information-theoretic limits and near-optimal polynomial-time recovery for monotone matrices, and \cite{lee2025statistical} provide statistically optimal and computationally efficient estimators for smooth tensors. 

A complementary line of work develops statistical inference for shuffled linear regression. \cite{slawski2021pseudo} propose a pseudo-likelihood-based asymptotic inference for the coefficient vector and provide EM-type algorithm for the optimization. \cite{chakraborty2024learning} study sparsely permuted data from a robust Bayesian perspective, treating mispaired entries as outliers within a generalized fractional posterior, and establish posterior consistency alongside practical sampling strategies.

\subsubsection{Repro samples method}

Our work adopts the repro samples method \citep{xie2022repro,xie2024repro}, which provides non-asymptotic, simulation-based inference by quantifying uncertainty through artificial samples. 
In high-dimensional linear models, repro samples have been used to construct model confidence sets and valid inference for coefficients \citep{wang2022finite}, with extensions to high-dimensional logistic models \citep{hou2025repro}. 
We transfer these ideas from support uncertainty (scaling primarily with $p$) to the qualitatively different regime of permutation uncertainty (scaling factorially with $n$): we build a data-dependent candidate set of permutations $\mC^{(L)}$ that contains the truth with high probability, then conduct inference uniformly over $\mC^{(L)}$ in the spirit of locally simultaneous inference. 
This yields (i) a finite-sample valid test for the presence of any mismatch and (ii) union-type confidence sets for $\bm{\beta}_0$ that account for alignment uncertainty. 

\subsection{Organization}
Section~\ref{sec:methodology} develops a localization step via repro samples, yielding a data-dependent candidate set of permutations and a practical score-weighted LAP. Section~\ref{sec:inference} then introduces finite-sample procedures: a conditional Monte Carlo test of no permutation and coefficient inference that remains valid under alignment uncertainty. Section~\ref{sec:theory} presents the theoretical guarantees: oracle identifiability, polynomial-rate miscoverage bounds for the candidate set, finite-sample validity of the test, coverage of the coefficient regions, and a high-probability argmin equivalence for the Hungarian surrogate. Section~\ref{sec:extensions} discusses partially permuted designs and ridge regularization. Section \ref{sec:tuning} discusses how to select the tuning parameters and Section~\ref{sec:numerical} reports empirical evidence from extensive simulations and an application to Beijing air-quality data. Finally, Section~\ref{sec:conclusion} concludes. The Appendix provides methodological background, additional theory, full proofs, and supplementary figures.

\subsection{Notation}
Vectors and matrices are bold.  
$\|\cdot\|_2$ denotes the Euclidean norm, $\|\cdot\|_\infty$ the max norm, and $\|\cdot\|_{\mathrm{op}}$ the operator norm.  
For a matrix $\bm A\in\R^{n\times p}$, $M_{\bm{A}} \in \R^{n \times n}$ denotes the orthogonal projection matrix onto $\range(\bm{A})$, and $R_{\bm{A}}:=I_n-M_{\bm{A}}\in \R^{n \times n}$ denotes the residual projection matrix.
The Hamming distance between permutations $\Pi$ and $I_n$ is $d(\Pi,I_n)=\sum_{i=1}^n\1\{\Pi_{ii}=0\}$.
We use $\mP_n$ for the set of all $n\times n$ permutation matrices and $\mP_{n,k}:=\{\Pi\in\mP_n: d(\Pi,I_n)\le k\}$ for the $k$-sparse subset.  
For random variables, $\Normal(\mu,\Sigma)$ denotes the Gaussian distribution, and $F_{a,b}$ denotes an $F$-distribution with $(a,b)$ degrees of freedom.

\section{Candidate set of permutations via repro samples}\label{sec:methodology}
We develop the candidate set of permutation matrices, which is the key ingredient of our statistical inference framework.
This section includes (i) a localization approach based on the repro sample method, and (ii) an score-weighted linear assignment problem (LAP) to handle our setup.

\subsection{Localization principle}\label{subsec:candidate}

We present the localization approach to make a candidate set $\mC^{(L)}\subset\mP_{n,k}$, which focuses on a small subset containing the permutation $\Pi_0$ with high probability and avoids the exhaustive search over all permutations in $\mP_{n,k}$.
This approach leverages the fact that the permutation estimated under artificially generated noise, known as a repro sample, is close to the true permutation identified under the realized noise.

To illustrate the idea, we first consider an ideal situation in which the unobserved realization of noise $\bm{u}^{\rel}$ in \eqref{eq:model intro} is fully available. The following Proposition \ref{prop:identification} claims that we can identify the true permutation $\Pi_0$ with $\bm{u}^{\rel}$ under $k$-sparse setup.

We make the following baseline assumption on the design matrix $\bm X$.
\begin{assumption}[Design matrix]\label{ass:generic-design}
The rows of $\bm X$ are generated i.i.d.\ from a distribution on
$\R^p$ with a Lebesgue density.
\end{assumption}
Throughout the paper, we treat $\bm X$ as a fixed design matrix and
conduct inference conditionally on its realized value.
The assumption above is imposed solely to guarantee that, with
probability one, the realized design matrix satisfies the required
full-rank condition.

\begin{proposition}[Unique recovery in the oracle setting]\label{prop:identification}
    In addition to Assumption \ref{ass:generic-design}, suppose $n-2k \ge p$ and $\bm\beta_0\neq \bm 0$ hold. Then, the true permutation $\Pi_0$ uniquely satisfies 

\begin{equation}\label{eq:repro_ideal_Pi}
   \Pi_0 = \argmin_{\Pi \in \mP_{n,k}}\| (I_n - M_{(\Pi \bm{X}, \bm{u}^{\rel})}) \bm{Y}^{\obs}\|_2^2,
\end{equation}
where $M_{(\Pi \bm{X}, \bm{u}^{\rel})}$ is the orthogonal projector onto the range of $(\Pi \bm{X}, \bm{u}^{\rel})$. 
\end{proposition}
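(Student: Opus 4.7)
The squared residual $\|(I_n - M_{(\Pi \bm{X}, \bm{u}^{\rel})})\bm{Y}^{\obs}\|_2^2$ vanishes if and only if $\bm{Y}^{\obs} \in V_\Pi := \range(\Pi \bm{X}, \bm{u}^{\rel})$, and the model $\bm{Y}^{\obs} = \Pi_0 \bm{X} \bm{\beta}_0 + \sigma_0 \bm{u}^{\rel}$ puts $\bm{Y}^{\obs}$ in $V_{\Pi_0}$ automatically, so the minimum value $0$ is attained at $\Pi = \Pi_0$. My plan is to show that, with probability one over the design (by Assumption~\ref{ass:generic-design}) and the Gaussian noise, $\bm{Y}^{\obs} \notin V_\Pi$ for every $\Pi \in \mP_{n,k} \setminus \{\Pi_0\}$. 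Since $\mP_{n,k}$ is finite, a union bound reduces the task to one fixed $\Pi \neq \Pi_0$. Substituting the model, the bad event becomes the existence of $(\bm{\gamma}, \tilde c) \in \R^p \times \R$ with $\Pi_0 \bm{X} \bm{\beta}_0 - \Pi \bm{X} \bm{\gamma} = \tilde c\, \bm{u}^{\rel}$, where $\tilde c := c - \sigma_0$; I then split on whether $\tilde c = 0$.

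The subcase $\tilde c = 0$ is where the real work happens and will be the main obstacle. Here $\Pi_0 \bm{X} \bm{\beta}_0 = \Pi \bm{X} \bm{\gamma}$. Writing $\pi_0,\pi$ for the underlying permutations, each fixes all but at most $k$ indices, so their common fixed-point set $F := \{i : \pi_0(i) = i = \pi(i)\}$ satisfies $|F| \ge n - 2k \ge p$. Reading the equation on the rows in $F$ gives $\bm{X}_F (\bm{\beta}_0 - \bm{\gamma}) = \bm 0$; since the rows of $\bm{X}$ are i.i.d.\ from a Lebesgue density, any $p$ of them are almost surely linearly independent, forcing $\bm{\gamma} = \bm{\beta}_0$. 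Substituting back yields $\bm{X}_{\pi_0(i)} \bm{\beta}_0 = \bm{X}_{\pi(i)} \bm{\beta}_0$ for every $i$; for $\Pi \neq \Pi_0$ there is some $i$ with $\pi_0(i) \neq \pi(i)$, so $\bm{X}_{\pi_0(i)}$ and $\bm{X}_{\pi(i)}$ are independent continuous row vectors, and the single linear constraint $(\bm{X}_{\pi_0(i)} - \bm{X}_{\pi(i)}) \bm{\beta}_0 = 0$ is nontrivial (using $\bm{\beta}_0 \neq \bm 0$) and hence carves out a null set. This is the only step where both hypotheses $n - 2k \ge p$ and $\bm{\beta}_0 \neq \bm 0$ are genuinely used.

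The subcase $\tilde c \neq 0$ is a soft measure-theoretic argument: solving for $\bm{u}^{\rel}$ shows that $\bm{u}^{\rel}$ must lie in $W_\Pi := \mathrm{span}(\Pi_0 \bm{X} \bm{\beta}_0) + \range(\Pi \bm{X})$, which has dimension at most $p + 1$. For $k \ge 1$ the hypothesis $n \ge p + 2k$ yields $p + 1 < n$, so $W_\Pi$ is a proper subspace of $\R^n$; since $\bm{u}^{\rel}$ has a full-rank Gaussian density conditionally on $\bm{X}$, it lies in $W_\Pi$ only with probability zero. The trivial case $k = 0$ contributes nothing since $\mP_{n,0} = \{\Pi_0\}$. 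A union bound over the finite set $\mP_{n,k} \setminus \{\Pi_0\}$ then completes the argument.
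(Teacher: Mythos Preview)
Your argument is correct and follows the same overall architecture as the paper: the minimum of the objective is $0$, attained at $\Pi_0$; any other minimizer $\Pi$ forces a relation $\Pi_0\bm X\bm\beta_0-\Pi\bm X\bm\gamma=\tilde c\,\bm u^{\rel}$; and one then rules this out case by case using the density assumptions together with a fixed-point set of size at least $n-2k\ge p$.

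Two tactical differences are worth noting. First, the paper multiplies through by $\Pi_0^\top$ and works with the fixed points of the composite $P=\Pi_0^\top\Pi\in\mP_{n,2k}$ (indices where $\pi_0$ and $\pi$ \emph{agree}), whereas you use the generally smaller set of common fixed points $\{i:\pi_0(i)=i=\pi(i)\}$; both sets have cardinality at least $n-2k$, so either choice works. Second, and more substantively, your handling of the case $\tilde c\neq 0$ is cleaner than the paper's: you observe directly that $\bm u^{\rel}$ would have to lie in the subspace $\mathrm{span}(\Pi_0\bm X\bm\beta_0)+\range(\Pi\bm X)$ of dimension at most $p+1<n$ (for $k\ge 1$), which has Gaussian measure zero. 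The paper instead restricts the equation to the fixed-point set $T$ and then needs a further case split on whether $|T|>p$ or $|T|=p$, the latter requiring an extra algebraic argument involving indices outside $T$. Your global dimension count sidesteps that split entirely.
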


When $\bm{u}^{\rel}$ is available, Proposition \ref{prop:identification} states that $\Pi_0$ is uniquely recovered. Also, we note that the $k$-sparse setup enables this recovery result. The complete proof is deferred to Appendix \ref{proof:prop identification}.

Next, we construct a candidate set in the case where $\bm{u}^{\rel}$ is not available.
By utilizing the insight from the oracle result as Proposition \ref{prop:identification}, we generate artificial samples of the noise that mimic the objective in \eqref{eq:repro_ideal_Pi}. Specifically, we generate $L$ i.i.d.\ vectors $\bm{u}_1^*, \ldots, \bm{u}_L^* \sim \Normal(0, I_n)$ 
and, for each $\ell=1,\ldots,L$, we define the least square problem and its optimizer
\begin{equation}\label{eq:repro_Pi}
   \hat{\Pi}_{\ell} = \argmin_{\Pi \in \mP_{n,k}}\| (I_n - M_{(\Pi \bm{X}, \bm{u}_{\ell}^*)}) \bm{Y}^{\obs}\|_2^2.
\end{equation}
This is an analogue of the oracle problem \eqref{eq:repro_ideal_Pi} by replacing the realization $\bm{u}^{\rel}$ by its artificial counterpart $\bm{u}^*_\ell$. We expect $\hat{\Pi}_{\ell}$ is close to $\Pi_0$ when $\bm{u}_{\ell}^*$ is sufficiently aligned with $\bm{u}^{\rel}$.

Finally, we collect $\hat{\Pi}_{1},\ldots,\hat{\Pi}_{L}$ to form the candidate set
\begin{equation}\label{eq:permutation candidates set}
    \mathcal{C}^{(L)}=\left\{\hat{\Pi}_{\ell}: \ell =1, \ldots, L\right\}.
\end{equation}

\subsection{Score-weighted linear assignment problem} \label{sec:algorithm}

We develop a methodology to approximately solve the least square objective in \eqref{eq:repro_Pi}.
Since the objective in \eqref{eq:repro_Pi} minimizes a nonlinear projection residual over permutation matrices,  the problem remains computationally demanding due to its combinatorial nature even when we restrict the search to $\mP_{n,k}$ (see \cite{pananjady2016linear,pananjady2017linear}).
To obtain a tractable surrogate, we replace \eqref{eq:repro_Pi} by a score-weighted linear assignment problem (LAP), which has the form $\min_{\Pi} \sum_{i} \sum_j \Omega_{ij} \Pi_{ij}$ with a cost matrix $\Omega \in \R^{n \times n}$ and can be solved exactly in $O(n^3)$ time by the Hungarian algorithm \citep{kuhn1955hungarian}.

To develop the LAP problem as a surrogate, for a fixed repro sample $\bm u^*$, we reform the original objective \eqref{eq:repro_Pi} into a (scaled) LAP cost plus a projection-mismatch term. Concretely, we expand \eqref{eq:repro_Pi} as follows: \begin{align} 
&\left\|(I_n-M_{(\Pi\bm X,\bm u^*)})\bm Y^{\obs}\right\|_2^2 \label{eq:lap-core-decomp}\\
&=
\underbrace{\frac{1}{2}\sum_{i=1}^n\sum_{j=1}^n (Y_i-m_j)^2\Pi_{ij}}_{=:T_1(\Pi)}
+ \underbrace{\left(\sum_{i,j} Y_i m_j\Pi_{ij}-{\bm Y^{\obs}}^\top M_{(\Pi\bm X,\bm u^*)}\bm Y^{\obs}\right)}_{=: T_2(\Pi)}
+ \underbrace{\frac12\left(\|\bm Y^{\obs}\|_2^2-\sum_{j=1}^n m_j^2\right)}_{=:T_3}. \notag 
\end{align}
The first term $T_1(\Pi)$ is a standard LAP objective with costs $(Y_i-m_j)^2$ where $m_j$ denotes the $j$-th element of $M_{(\bm{X}, \bm{u}^*)}\bm{Y}^{\obs}$, and the third term $T_3$ is ignorable since it does not depend on $\Pi$.
About the second term $T_2(\Pi)$, since $I_n$ is one of the minimizers of the term, i.e., $T_2(I_n) = \min_{\Pi \in \mP_n} T_2(\Pi)$, we can approximate the minimization of $T_2(\Pi)$ by augmenting a cost which induces an identity to be a minimizer. 
In particular, with some coefficients $\lambda_1, \lambda_2 > 0$, we consider a LAP with a sparsity-inducing cost $\lambda_1\1\{j\neq i\} -\lambda_2\1\{j=i\}(Y_i-m_i)^2$, where the first term penalizes each off-diagonal reassignment (an $\ell_0$-type cost on the number of moved indices), and the second term adds a credit to the diagonal residual. 

Consequently, we consider an estimator by solving the following score-weighted LAP as a surrogate for the least square problem \eqref{eq:repro_Pi}
\begin{align}  \label{def:tilde_Pi}
  \tilde{\Pi}
  = \argmin_{\Pi \in \mP_n}
    \sum_{i=1}^n \sum_{j=1}^n \Omega_{ij}(\bm{u}^*)\Pi_{ij},
\end{align}
where $\Omega(\bm{u}^*) \in \R^{n \times n}$ is a cost matrix with $\bm{u}^*$ defined as
\begin{align} \label{eq:Omega_ustar}
  \Omega_{ij}(\bm{u}^*)
  =  (Y_i - m_j)^2 + \lambda_1\1\{j\neq i\}
     -\lambda_2 \1\{j=i\}(Y_i - m_i)^2.
\end{align}
Theorem~\ref{thm:high-probability equivalence} formalizes this reduction: for suitable $(\lambda_1,\lambda_2)$, the score-weighted LAP minimizer coincides with the argmin of \eqref{eq:repro_Pi} with high probability.
We will further discuss the selection of the parameters $\lambda_1$ and $\lambda_2$ in Section \ref{sec:tuning}.
For the sake of completeness, we present the original Hungarian algorithm in Algorithm \ref{alg:scoring hungarian} in Appendix \ref{sec:hungarian alg}.

\subsection{Unique recovery of the parameters}

We extend the results of the recovery of permutation matrices $\Pi_0$ in Proposition \ref{prop:identification} to include coefficients $(\bm{\beta}_0, \sigma_0) $. This is crucial for performing parameter inference.

\begin{proposition}\label{prop:identification_joint}
    If $n-2k\ge p$ holds, then, given the noise realization $\bm u^{\rel}$, the triple $(\Pi_0,\bm\beta_0,\sigma_0) \in \mP_{n,k} \times \R^p \times \R_+$ is the unique minimizer of
\begin{equation}\label{eq:repro_ideal}
   \min_{\Pi \in \mP_{n,k},\ \bm{\beta} \in \R^p,\ \sigma \ge 0 }
   \left\| \bm{Y}^{\obs} -  \Pi \bm{X} \bm{\beta} - \sigma \bm{u}^{\rel}\right\|_2^2.
\end{equation}
\end{proposition}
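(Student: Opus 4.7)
The plan is to reduce the joint minimization to the permutation-only problem of Proposition~\ref{prop:identification}, and then recover $(\bm\beta,\sigma)$ by a linear-algebra argument on the augmented design.

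First I would observe that the objective $\|\bm Y^{\obs}-\Pi\bm X\bm\beta-\sigma\bm u^{\rel}\|_2^2$ is nonnegative and vanishes at the true triple $(\Pi_0,\bm\beta_0,\sigma_0)$ by the data-generating equation \eqref{eq:model intro}, so the minimum value equals $0$. Consequently, every minimizer $(\Pi,\bm\beta,\sigma)\in\mP_{n,k}\times\R^p\times\R_+$ must satisfy $\bm Y^{\obs}=\Pi\bm X\bm\beta+\sigma\bm u^{\rel}\in\range(\Pi\bm X,\bm u^{\rel})$, which is equivalent to the projection identity $\|(I_n-M_{(\Pi\bm X,\bm u^{\rel})})\bm Y^{\obs}\|_2^2=0$.

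Next I would invoke Proposition~\ref{prop:identification}, which asserts that $\Pi_0$ is the unique element of $\mP_{n,k}$ making $\|(I_n-M_{(\Pi\bm X,\bm u^{\rel})})\bm Y^{\obs}\|_2^2$ vanish. Thus the only $\Pi\in\mP_{n,k}$ compatible with a zero objective is $\Pi=\Pi_0$, and the problem reduces to uniqueness of $(\bm\beta,\sigma)$ in $\Pi_0\bm X\bm\beta+\sigma\bm u^{\rel}=\bm Y^{\obs}$, equivalently $\Pi_0\bm X(\bm\beta-\bm\beta_0)=(\sigma_0-\sigma)\bm u^{\rel}$.

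For the final step I would argue that the augmented matrix $(\Pi_0\bm X,\bm u^{\rel})\in\R^{n\times(p+1)}$ has full column rank $p+1$ for the given realization. Under Assumption~\ref{ass:generic-design} the design $\bm X$ has rank $p$, so $\range(\Pi_0\bm X)$ is a fixed $p$-dimensional subspace of $\R^n$; since $n\ge p+2k\ge p+1$ and $\bm u^{\rel}$ is a realization of $\Normal(0,I_n)$, it lies outside $\range(\Pi_0\bm X)$ with probability one. Full rank then forces $\sigma=\sigma_0$ (otherwise $\bm u^{\rel}\in\range(\Pi_0\bm X)$) and consequently $\bm X(\bm\beta-\bm\beta_0)=\bm 0$, giving $\bm\beta=\bm\beta_0$ by the full column rank of $\bm X$. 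The main --- essentially only --- obstacle is this generic-position claim for $\bm u^{\rel}$, which is handled by the standard fact that a Gaussian vector avoids a fixed lower-dimensional subspace almost surely; everything else follows directly from Proposition~\ref{prop:identification} and linear algebra.
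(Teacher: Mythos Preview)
Your argument is correct and is in fact more modular than what the paper does. The paper gives no separate proof of Proposition~\ref{prop:identification_joint}; joint uniqueness is established \emph{inside} the proof of Proposition~\ref{prop:identification} in Appendix~\ref{proof:prop identification}, and in the opposite order from yours. There, starting from any $\tilde\Pi\in\mP_{n,k}$ attaining zero, the paper restricts to the fixed-point set of $\Pi_0^\top\tilde\Pi$ (of size at least $n-2k\ge p$), performs a case analysis on whether $|T|>p$ or $|T|=p$ to deduce $\tilde\sigma=\sigma_0$ and $\tilde{\bm\beta}=\bm\beta_0$ first, and only \emph{then} concludes $\tilde\Pi=\Pi_0$ from $\tilde\Pi\bm X\bm\beta_0=\Pi_0\bm X\bm\beta_0$ and distinctness of the entries of $\bm X\bm\beta_0$. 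Your route instead invokes Proposition~\ref{prop:identification} as a black box to pin down $\Pi=\Pi_0$ first, after which the uniqueness of $(\bm\beta,\sigma)$ reduces to the almost-sure full column rank of $(\Pi_0\bm X,\bm u^{\rel})$. The paper's approach is self-contained (both propositions fall out of one argument) but requires the fixed-point case analysis; yours is shorter and avoids that case split, at the cost of relying on Proposition~\ref{prop:identification} as an input.

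One small slip: the chain $n\ge p+2k\ge p+1$ requires $k\ge 1$. When $k=0$ the hypothesis only gives $n\ge p$, and at $n=p$ the augmented matrix $(\Pi_0\bm X,\bm u^{\rel})$ cannot have rank $p+1$; this degenerate case (where $\mP_{n,0}=\{I_n\}$) is not really the regime of interest and the paper's own proof has the same implicit restriction.
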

 
We compare the identifiability condition $n-2k\ge p$ in Proposition \ref{prop:identification_joint} to that of a previous study without the $k$-sparsity.
In particular, for the recovery problem without any constraints,  \citep{unnikrishnan2018unlabeled} derived the identifiability condition $n\ge 2p$.
Compared with this condition, our derived condition shows that $k$-sparsity can reduce the required sample size compared to the unrestricted case when $k$ is small.
See Theorem \ref{thm:counterexample} in Appendix \ref{section:appendix theory} for details.

\section{Finite-sample statistical inference}\label{sec:inference}

\subsection{Inference on the permutation matrix}\label{subsubsection:testing permutation}
This section construct a statistic for the $k_0$-sparsity test with the hypotheses
\begin{equation}\label{eq:hypothesis-k0}
    H_0:\Pi_0\in\mP_{n,k_0}\quad\text{vs.}\quad H_1:\Pi_0\in\mP_{n,k}\setminus\mP_{n,k_0},
\end{equation}
where $k_0 \le k$, by introducing a conditional Monte Carlo method for valid finite-sample statistical inference of the permutation matrix $\Pi_0$. The special case with $k_0=0$ reduces to testing whether any mismatch is present (i.e., $\Pi_0 = I_n$). In the context of shuffled regression, the presence of a permutation is of direct interest \citep{slawski2021pseudo}. 

\subsubsection{Applying localization}

First, we localize the null domain using the candidate set $\mC^{(L)}$ constructed in Section \ref{sec:methodology}.
The key property established there is that $\mC^{(L)}$ contains the true permutation $\Pi_0$ with high probability (see Theorem \ref{thm:recovery}).
Localization restricts attention to the data-dependent but typically small set $\hat{\mP}_{n,k_0}:=\mC^{(L)}\cap\mP_{n,k_0}$, on which we calibrate the rejection threshold uniformly over $\Pi\in\hat{\mP}_{n,k_0}$.
This approach reduces the computational burden of the conditional Monte Carlo calibration under
$H_0:\Pi_0\in\mP_{n,k_0}$, which requires controlling null distribution whose size is combinatorial in the sample size $n$.

More formally, we compute a least squares estimator over the candidate set,
\begin{equation}\label{eq:Pi-hat-LS}
    \hat{\Pi}(\bm Y)\in\argmin_{\Pi\in\mC^{(L)}}\left\|(I_n-M_{\Pi\bm X})\bm Y\right\|_2^2,
\end{equation}
and use the test statistic
\begin{equation}\label{eq:Dk-stat}
    D(\bm Y):=d\left(\hat{\Pi}(\bm Y),I_n\right),\qquad D^{\obs}:=D(\bm Y^{\obs}).
\end{equation}
Intuitively, under $H_0$, the best-fitting permutation in $\mC^{(L)}$ should typically have at most $k_0$ mismatches, whereas under $H_1$, it must typically use more than $k_0$ mismatches to explain the data. 

\subsubsection{Handling nuisance}

Next, we remove an effect of the nuisance parameter $(\bm{\beta}, \sigma)$ to enable inference focused on $\Pi$, which is our primary interest.
To this aim, we consider a statistical model $\bm{Y}_{\bm{\theta}} = \Pi \bm{X} \bm{\beta} + \sigma \bm{U}$ with $\bm{\theta} = (\Pi, \bm{\beta}, \sigma) \in \mP_{n,k} \times \R^p \times \R_+$ and $\bm U\sim\Normal(0,I_n)$, then study a conditional distribution of $\bm{Y}_{\bm{\theta}}$, which is necessary for the conditional Monte Carlo approximation.

We derive the conditional distribution of $ \bm{Y}_{\bm \theta}$ using a formula of sufficient statistics.
By the reformulation $
\bm{Y}_{\bm{\theta}} = \bm{M}_{\Pi \bm{X}} \bm{Y}_{\bm{\theta}} + (I_n - \bm{M}_{\Pi \bm{X}})\bm{Y}_{\bm{\theta}} = \bm{M}_{\Pi \bm{X}} \bm{Y}_{\bm{\theta}} + \sigma(I_n - \bm{M}_{\Pi \bm{X}})\bm{U}
$ with a fixed $\Pi$, 
we define a pair of sufficient statistics for $(\bm \beta, \sigma)$ as
\begin{equation}\label{eq:sufficient statistics}
    S_{1,\Pi}(\bm Y_{\bm \theta}):=M_{\Pi\bm X}\bm Y_{\bm \theta},\qquad S_{2,\Pi}(\bm Y_{\bm \theta}):=\left\|(I_n-M_{\Pi\bm X})\bm Y_{\bm \theta}\right\|_2.
\end{equation}
Then, 
we have the representation of the conditional distribution as
\begin{equation}\label{eq:conditional-representation}
    \bm Y_{\bm \theta}  \mid \left(S_{1,\Pi}(\bm Y_{\bm \theta}),S_{2,\Pi}(\bm Y_{\bm \theta})\right)
     \overset{d}{=}\
    S_{1,\Pi}(\bm Y_{\bm \theta})+S_{2,\Pi}(\bm Y_{\bm \theta})\frac{(I_n-M_{\Pi\bm X})\bm U}{\|(I_n-M_{\Pi\bm X})\bm U\|_2}.
\end{equation}
For a fixed $\Pi$, the pair $(S_{1,\Pi}(\bm Y_{\bm \theta}), S_{2,\Pi}(\bm Y_{\bm \theta}))$ is sufficient for $(\bm \beta, \sigma)$, and the direction $\frac{(I_n-M_{\Pi\bm X})\bm U}{\|(I_n-M_{\Pi\bm X})\bm U\|_2}$ is ancillary, being independent of $(\bm \beta, \sigma)$. Consequently, the conditional distribution of $\bm Y_{\bm \theta}  \mid (S_{1,\Pi}(\bm Y_{\bm \theta}),S_{2,\Pi}(\bm Y_{\bm \theta}))$ is free of $(\bm \beta, \sigma)$. This yields the conditional Monte Carlo approximation of the distribution of $D(\bm Y_{\bm \theta})$ in \eqref{eq:Dk-stat}.

\subsubsection{Critical value}

We derive a critical value localized on $\hat{\mP}_{n,k_0}$ by the conditional Monte Carlo approximation.
Given $\bm Y^{\obs}$, we set $s_{1,\Pi}^{\obs}=S_{1,\Pi}(\bm Y^{\obs})$ and $s_{2,\Pi}^{\obs}=S_{2,\Pi}(\bm Y^{\obs})$. 
Let $\widehat c_{1-\alpha}(\Pi)$ denote $(1-\alpha)$-quantile of the distribution of $D(\bm Y) \mid (S_{1,\Pi}(\bm Y),S_{2,\Pi}(\bm Y)) =(s_{1,\Pi}^{\obs}, s_{2,\Pi}^{\obs})$, and define the composite critical value localized on $\hat{\mP}_{n,k_0}$:
\begin{equation}\label{eq:critical-value-composite}
    \widehat c_{1-\alpha}^{(k_0)}:=\max_{\Pi\in\hat{\mP}_{n,k_0}}\widehat c_{1-\alpha}(\Pi), \mbox{~~and~~}
     \hat B^{(k_0)}_{1-\alpha}:=\{0,1,\ldots,\widehat c_{1-\alpha}^{(k_0)}\}.
\end{equation}
We reject $H_0$ when $D^{\obs}\notin \hat B^{(k_0)}_{1-\alpha}$ holds, i.e., when $D^{\obs}>\hat c_{1-\alpha}^{(k_0)}$ holds. To approximate the critical value, we generate i.i.d.\ $\bm u_1^*,\ldots,\bm u_M^*\sim\Normal(0,I_n)$ and define
\[
    \bm Y_{m,\Pi}^*:=s_{1,\Pi}^{\obs}+s_{2,\Pi}^{\obs}\frac{(I_n-M_{\Pi\bm X})\bm u_m^*}{\|(I_n-M_{\Pi\bm X})\bm u_m^*\|_2},
    \qquad m=1,\ldots,M.
\]
For each $m=1,\dots, M$, we compute a test statistic $D_{m,\Pi}^*:=D(\bm Y_{m,\Pi}^*)$ as \eqref{eq:Dk-stat} and use the $(1-\alpha)$-empirical quantile of the conditional Monte Carlo samples $\{ D_{m,\Pi}^*\}_{m=1}^M$.

Theorem \ref{thm:p-value} in Section \ref{sec:theory} shows that, the resulting test controls the type-I error at level $\alpha$ in finite samples conditionally on $s_{1,\Pi}^{\obs}$ and $s_{2,\Pi}^{\obs}$. We also obtain the marginal type-I error control, where the only additional error comes from the miscoverage probability of the candidate set.

\begin{remark}
An alternative test statistic is possible. For example, the statistic $| \min_{\Pi \in \mP_{n,k}}\| (I_n - M_{\Pi \bm{X}}) \bm{Y}_{\bm{\theta}}\|_2^2 - \| (I_n - M_{\bm{X}}) \bm{Y}_{\bm{\theta}}\|_2^2 |$
is continuous and supported on $[0,\infty)$, which avoids computing the minimizer explicitly but complicates conditional Monte Carlo approximation.
\end{remark}

\subsection{Inference on the coefficient vector}\label{subsec:coef}
We now construct a finite-sample valid confidence set for the coefficient vector $\bm{\beta}_0$, accounting for uncertainty from estimating the permutation. Since the parameter of interest is $\bm{\beta}$, we treat $\Pi$ and $\sigma$ as nuisance. We first consider a pivotal map that depends only on $(\Pi, \bm{\beta})$, and then quantify uncertainty about $\Pi$ via the candidate set $\mathcal{C}^{(L)}$. The resulting region is the union of coefficient confidence sets obtained by fixing each $\Pi \in \mathcal{C}^{(L)}$.

We define a statistics for the purpose. With given $(\Pi, \bm{\beta})$, we define it as
\begin{align*}
    T(\bm Y_{\bm \theta}, (\Pi, \bm{\beta}))  
    &= \frac{\bm{U}^\top \bm{M}_{\Pi \bm{X}}\bm{U}/p}{\bm{U}^\top (I_n - \bm{M}_{\Pi \bm{X}}) \bm{U}/(n-p)} 
     = \frac{(\bm Y_{\bm \theta} - \Pi \bm{X} \bm{\beta})^\top \bm{M}_{\Pi \bm{X}} (\bm Y_{\bm \theta} - \Pi \bm{X} \bm{\beta} )/p}{(\bm Y_{\bm \theta} - \Pi \bm{X} \bm{\beta} )^\top (I_n - \bm{M}_{\Pi \bm{X}})(\bm Y_{\bm \theta} - \Pi \bm{X} \bm{\beta} )/(n-p)}.
\end{align*}
Under the case $(\Pi, \bm{\beta}) = (\Pi_0, \bm{\beta}_0)$ with the true parameters, this statistic has an $F_{p, n-p}$ distribution.

Then, we define the level-$\alpha$ confidence set as
\begin{equation}\label{eq:confidence set beta zero}
    \Gamma_{\alpha}^{\mathrm{coef}}(\bm{Y}^{\obs}) = \bigcup_{\Pi \in \mathcal{C}^{(L)}} \left\{\bm{\beta}\in \R^p : T(\bm{Y}^{\obs}, (\Pi, \bm{\beta})) \le F_{p, n-p}^{-1}(\alpha) \right\},
\end{equation}
where $F_{p, n-p}^{-1}$ is the quantile function of the $F_{p, n-p}$ distribution. We show in Theorem \ref{thm:confidence} how the coverage probability of $\Gamma_{\alpha}^{\mathrm{coef}}(\bm{Y}^{\obs})$ is controlled by the nominal level $\alpha$ and by the miscoverage probability $\delta$ of the candidate set.

\section{Theory}\label{sec:theory}

\subsection{High-probability recovery of $k$-sparse permutation matrices}

We study the candidate set \eqref{eq:permutation candidates set} and show that it contains the true permutation $\Pi_0$ with high-probability.
Let $\bP_{\left(\mathcal{U}^L, \bm{U}\right)}$ denote probability with respect to the joint law of $(\mathcal{U}^L, \bm{U})$, where $\mathcal{U}^L = (\bm{U}_1,\dots, \bm{U}_L)$. 

To state the main results, we define the numerical constant
\begin{equation}\label{eq:c min}
     C_{\min}(\Pi_0) = \min_{\substack{\Pi \in \mP_{n,k}\\ \Pi \neq \Pi_0}} \frac{\|(I_n - \Pi M_{\bm{X}} \Pi^\top) \Pi_0\bm{X}\bm{\beta}_{0} \|_2^2}{n \max (d(\Pi_0, I_n) - d(\Pi, I_n), 1)},
\end{equation}
where $M_{\bm{X}} = \bm{X} (\bm{X}^\top \bm{X})^{-1}\bm{X}^\top$. 
The constant $C_{\min}(\Pi_0)$ is the degree of separation between the true structure $\Pi_0\bm{X}\bm{\beta}_{0}$ and its projections onto shuffled designs.

We give a theoretical analysis on the miscoverage probability of the candidate set $\mathcal{C}^{(L)}$.

\begin{theorem}[Coverage probability of the candidate set]\label{thm:recovery}
Suppose $C_{\min}(\Pi_0) > 0$ and $n > p + 1$ hold. Then, there exists a positive real sequence $(\delta_L)_{L \in \N}$ satisfying
\begin{equation}
\label{eq:recovery-finite-L}
\bP_{\left(\mathcal{U}^L, \bm{U}\right)}\left( \Pi_0 \notin \mathcal{C}^{(L)} \right) \le
\delta_L,
\end{equation}
for all $L \ge 2$, such that we have the following as $L \to \infty$:
\begin{equation}\label{eq:delta_L}
     \delta_L
= O\left( \frac{\left(\log L\right)^{(n-p)/2}}{L^{(n-p-1)/(2(n-1))}} \right).   
\end{equation}

\end{theorem}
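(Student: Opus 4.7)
The plan is to work conditionally on the realized noise $\bm U=\bm u^{\rel}$, bound for a single repro draw $\bm u_\ell^*$ the probability $p(\bm u^{\rel})$ that $\hat\Pi_\ell=\Pi_0$ from below, use independence across $\ell$ to get $\bP(\Pi_0\notin\mC^{(L)}\mid\bm U=\bm u^{\rel})\le\exp\{-Lp(\bm u^{\rel})\}$, and finally integrate over $\bm U\sim\Normal(0,I_n)$. The starting point is Proposition~\ref{prop:identification}, which ensures that the oracle objective
\[
F(\Pi,\bm v):=\bigl\|(I_n-M_{(\Pi\bm X,\bm v)})\bm Y^{\obs}\bigr\|_2^2
\]
is uniquely minimized at $\Pi_0$ when $\bm v=\bm u^{\rel}$, so that there is a strictly positive gap between $F(\Pi_0,\bm u^{\rel})$ and $\min_{\Pi\ne\Pi_0}F(\Pi,\bm u^{\rel})$.

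The first technical step is to convert this qualitative gap into a quantitative lower bound. Using the identity $F(\Pi,\bm v)=\|(I_n-M_{\Pi\bm X})\bm Y^{\obs}\|_2^2\sin^2\phi_\Pi(\bm v)$, where $\phi_\Pi(\bm v)$ denotes the angle between $(I_n-M_{\Pi\bm X})\bm Y^{\obs}$ and $(I_n-M_{\Pi\bm X})\bm v$, one can exhibit
\[
\min_{\Pi\ne\Pi_0}F(\Pi,\bm u^{\rel})-F(\Pi_0,\bm u^{\rel})\ \ge\ G(\bm u^{\rel}),
\]
with $G(\bm u^{\rel})\gtrsim n\,C_{\min}(\Pi_0)$ modulo lower-order noise contributions; the constant $C_{\min}(\Pi_0)>0$ enters naturally because $\|(I_n-M_{\Pi\bm X})\Pi_0\bm X\bm\beta_0\|_2^2$ is the signal-mismatch driver. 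A complementary Lipschitz-type perturbation estimate, controlling $|F(\Pi,\bm u_\ell^*)-F(\Pi,\bm u^{\rel})|$ uniformly over $\Pi\in\mP_{n,k}$ with a constant polynomial in $\|\bm u^{\rel}\|_2$, then identifies a deterministic stability radius $\eta(\bm u^{\rel})$ such that, whenever $(I_n-M_{\Pi_0\bm X})\bm u_\ell^*$ lies within angle $\eta(\bm u^{\rel})$ of $(I_n-M_{\Pi_0\bm X})\bm u^{\rel}$, the ordering at $\bm u^{\rel}$ is preserved and $\hat\Pi_\ell=\Pi_0$.

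Conditionally on $\bm U$, the direction of $(I_n-M_{\Pi_0\bm X})\bm u_\ell^*$ is uniform on $\mathbb S^{n-p-1}$, so the solid-angle fraction of a spherical cap of half-angle $\eta$ yields $p(\bm u^{\rel})\gtrsim\eta(\bm u^{\rel})^{n-p-1}$. Combining this with the $\exp\{-Lp\}$ bound and taking expectation gives
\[
\bP(\Pi_0\notin\mC^{(L)})\le\bE\!\left[\exp\!\left\{-c\,L\,\eta(\bm U)^{n-p-1}\right\}\right].
\]
I would then split the expectation at a threshold $R_0=R_0(L)$: on $\{\|\bm U\|_2\le R_0\}$ I substitute the in-bulk exponential bound, and on the complement I use the $\chi_n$-tail estimate $\bP(\|\bm U\|_2>R_0)\lesssim R_0^{n-2}e^{-R_0^2/2}$. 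Choosing $R_0^2$ of order $\log L$ and optimizing balances the two contributions, producing the claimed polynomial rate with logarithmic factors.

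The main obstacle is this final balancing: the exponent $(n-p-1)/(2(n-1))$ and the $(\log L)^{(n-p)/2}$ factor depend sensitively on the polynomial scaling of $\eta(\bm u^{\rel})$ in $\|\bm u^{\rel}\|_2$ and on a careful use of the $\chi_n$ tail alongside the solid-angle dimension $n-p-1$. A loose estimate in the perturbation step would inflate the power of $\|\bm u^{\rel}\|_2$ appearing in $1/\eta$ and noticeably degrade the rate; hence the technical core of the argument is a sharp, dimension-aware perturbation bound for the projection $M_{(\Pi\bm X,\bm u^*)}$ uniformly in $\Pi\in\mP_{n,k}$, after which the final Laplace-type integration is essentially routine.
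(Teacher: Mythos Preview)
Your stability-radius step contains a genuine gap. You claim that once the direction of $(I_n-M_{\Pi_0\bm X})\bm u_\ell^*$ lies within angle $\eta(\bm u^{\rel})$ of $(I_n-M_{\Pi_0\bm X})\bm u^{\rel}$, the ordering $F(\Pi_0,\bm u_\ell^*)<F(\Pi,\bm u_\ell^*)$ for all $\Pi\neq\Pi_0$ is preserved. But $F(\Pi,\bm v)$ depends on $\bm v$ only through the direction of $(I_n-M_{\Pi\bm X})\bm v$, and the $\Pi_0$-projected direction of $\bm u_\ell^*$ leaves $M_{\Pi_0\bm X}\bm u_\ell^*$ completely free. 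For $\Pi\neq\Pi_0$, the column spaces of $\Pi\bm X$ and $\Pi_0\bm X$ differ, so $(I_n-M_{\Pi\bm X})\bm u_\ell^*$ contains a nontrivial contribution from $M_{\Pi_0\bm X}\bm u_\ell^*$; this piece can rotate $(I_n-M_{\Pi\bm X})\bm u_\ell^*$ into alignment with $(I_n-M_{\Pi\bm X})\bm Y^{\obs}$ and drive $F(\Pi,\bm u_\ell^*)$ to zero, regardless of how close you are in the $\Pi_0$-projected direction. A uniform Lipschitz bound on $|F(\Pi,\bm u_\ell^*)-F(\Pi,\bm u^{\rel})|$ therefore cannot be stated in terms of a $\mathbb S^{n-p-1}$ neighborhood; at minimum you would need the full-sphere angle, yielding a cap volume $\eta^{n-1}$ rather than $\eta^{n-p-1}$, and your final Laplace integration would no longer produce the claimed exponents.

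The paper avoids this problem by decoupling the two pieces rather than treating them via a single perturbation bound. It conditions on the full-space alignment event $\{\phi(\bm U_\ell^*,\bm U)>1-\gamma^2\}$, whose probability is $\gtrsim\gamma^{n-1}/(n-1)$; this is what makes $F(\Pi_0,\bm U_\ell^*)$ small and simultaneously bounds the noise cross-terms in $F(\Pi,\bm U_\ell^*)-F(\Pi_0,\bm U_\ell^*)$. Separately and independently of any closeness to $\bm U$, it uses a union bound over $\Pi\in\mP_{n,k}$ and a spherical-cap estimate in $\range(I_n-M_{\Pi\bm X})$ to show that the signal term $\|(I_n-M_{(\Pi\bm X,\bm U_\ell^*)})\Pi_0\bm X\bm\beta_0\|_2^2$ is at least $(1-\gamma_1^2)\|w_\Pi\|_2^2$ with high probability; this is where the $(n-p-1)$-dimensional geometry enters. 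The two probabilities are then balanced via a deterministic choice $\gamma_L\asymp((\log L)/L)^{1/(n-1)}$, and the product of the two exponents $(n-p-1)/2$ and $1/(n-1)$ is exactly what yields $L^{-(n-p-1)/(2(n-1))}$. No Laplace-type integration over $\bm U$ is needed: the $\chi^2$ tail of $\|\bm U\|_2^2$ is handled inside the conditional bound, not by a threshold-splitting of the outer expectation.
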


Theorem~\ref{thm:recovery} theoretically validates the localization principle that underlies our framework.  
The candidate set $\mathcal{C}^{(L)}$ acts as an empirical analogue of a confidence region for $\Pi_0$: as $L$ grows, the probability that $\Pi_0$ is excluded from $\mathcal{C}^{(L)}$ decays polynomially.  
Thus, Theorem~\ref{thm:recovery} shows that $\mathcal{C}^{(L)}$ provides a localized representation of the combinatorial parameter space of $\Pi$, turning an intractable search into a manageable and possibly small random subset that contains the truth with high probability.

Theorem~\ref{thm:recovery} is the permutation analogue of the model candidate set results  \citep{wang2022finite,hou2025repro} by the repro-sample approach: whereas those works treat support uncertainty (scaling with $p$), Theorem~\ref{thm:recovery} addresses permutation uncertainty (scaling with $n$), providing the first finite-sample localization guarantee on the discrete setup. 

\begin{remark}[Comparison with selective inference]
    Theorem~\ref{thm:recovery} offers a principled alternative to classical selective inference, where exact conditioning on the selection event is analytically and computationally infeasible for permutations due to their nonpolyhedral, combinatorial nature.  
Our candidate-set approach replaces exact conditioning with a localization step: it identifies a small data-dependent subset that contains $\Pi_0$ with high probability and then performs inference uniformly over this subset.  
\end{remark}

\subsection{A finite-sample valid test for the permutation matrix}
We now formalize the validity of the $k_0$-sparsity test introduced in Section~\ref{subsubsection:testing permutation}.
Fix $k_0\le k$ and define the statistic $D(\bm Y)$ in \eqref{eq:Dk-stat} and the localized null set $\hat{\mP}_{n,k_0}$.

Then, we validate the critical value $\hat{c}_{1-\alpha}^{(k_0)}$ and the confidence set $\hat{B}_{1-\alpha}^{(k_0)}$ in \eqref{eq:critical-value-composite}.

\begin{theorem}[Type-I error control for the sparsity test] \label{thm:p-value}
Suppose $C_{\min}(\Pi_0) > 0$ and $n > p + 1$ hold.
Then, for the true parameter $\theta_0=(\Pi_0,\bm\beta_0,\sigma_0)$ with $\Pi_0\in\mP_{n,k_0}$, we have the finite-sample bound
\begin{equation}\label{eq:marginal-validity-k0}
    \bP_{\left(\mathcal{U}^L, \bm{U}\right)}\left(D(\bm Y_{\theta_0})\notin \hat{B}^{(k_0)}_{1-\alpha}\right)\le \alpha+\delta_L,
\end{equation}
where $\delta_L$ is specified in Theorem \ref{thm:recovery}.
\end{theorem}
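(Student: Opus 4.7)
The plan is to combine a set-containment decomposition based on Theorem~\ref{thm:recovery} with a conditional pivotality argument based on the representation \eqref{eq:conditional-representation}. Writing $\mathcal{E}:=\{\Pi_0\in\mC^{(L)}\}$, a union bound gives
\[
\bP_{(\mathcal{U}^L,\bm U)}\!\left(D(\bm Y_{\theta_0})\notin\hat B^{(k_0)}_{1-\alpha}\right)
\le \bP_{(\mathcal{U}^L,\bm U)}\!\left(D(\bm Y_{\theta_0})>\widehat c_{1-\alpha}^{(k_0)},\ \mathcal{E}\right)+\bP_{(\mathcal{U}^L,\bm U)}(\mathcal{E}^c),
\]
and Theorem~\ref{thm:recovery} immediately controls the second summand by $\delta_L$. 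This localization reduces matters to showing the first term is at most $\alpha$.

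On the event $\mathcal{E}$, the hypothesis $\Pi_0\in\mP_{n,k_0}$ yields $\Pi_0\in\hat{\mP}_{n,k_0}=\mC^{(L)}\cap\mP_{n,k_0}$, so the maximum in \eqref{eq:critical-value-composite} is taken over a set that contains $\Pi_0$, and therefore $\widehat c_{1-\alpha}^{(k_0)}\ge \widehat c_{1-\alpha}(\Pi_0)$. Consequently,
\[
\{D(\bm Y_{\theta_0})>\widehat c_{1-\alpha}^{(k_0)}\}\cap\mathcal{E}\subseteq\{D(\bm Y_{\theta_0})>\widehat c_{1-\alpha}(\Pi_0)\},
\]
so it remains to establish the unconditional bound $\bP(D(\bm Y_{\theta_0})>\widehat c_{1-\alpha}(\Pi_0))\le\alpha$, which no longer involves the maximum over the random set $\hat{\mP}_{n,k_0}$.

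For that bound I would invoke conditioning on the sigma-field $\mathcal{F}:=\sigma\!\left(S_{1,\Pi_0}(\bm Y_{\theta_0}),\,S_{2,\Pi_0}(\bm Y_{\theta_0}),\,\mathcal{U}^L\right)$. Because $\mathcal{U}^L$ is independent of $\bm U$, enlarging the conditioning by $\mathcal{U}^L$ does not alter the conditional law of $\bm Y_{\theta_0}$ given the sufficient statistics, which by \eqref{eq:conditional-representation} is free of $(\bm\beta_0,\sigma_0)$. The critical value $\widehat c_{1-\alpha}(\Pi_0)$ is, by definition, the $(1-\alpha)$-quantile of the conditional law of $D(\bm Y)$ at the observed $(s^{\obs}_{1,\Pi_0},s^{\obs}_{2,\Pi_0})$, hence $\mathcal{F}$-measurable. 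The upper-quantile inequality—valid also for integer-valued $D$ under the convention implicit in \eqref{eq:critical-value-composite}—then delivers
\[
\bP\!\left(D(\bm Y_{\theta_0})>\widehat c_{1-\alpha}(\Pi_0)\mid\mathcal{F}\right)\le\alpha\quad\text{a.s.},
\]
and the tower property yields the unconditional bound. Combined with the localization step, this gives the claim.

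The main obstacle is the measurability bookkeeping around the data-dependent candidate set: since $D(\cdot)$ depends implicitly on $\mC^{(L)}$, which is itself a function of $\bm Y^{\obs}$ and $\mathcal{U}^L$, one must verify that the conditional distribution of $D(\bm Y_{\theta_0})$ given $\mathcal{F}$ is exactly the one used to define $\widehat c_{1-\alpha}(\Pi_0)$. The cleanest route is to treat $\mC^{(L)}$ as a $\sigma(\bm Y^{\obs},\mathcal{U}^L)$-measurable functional and exploit the independence $\mathcal{U}^L\indep\bm U$ to preserve the pivotal representation \eqref{eq:conditional-representation}; once that is set up, the remaining steps—the quantile inequality and the two-piece union bound—are routine.
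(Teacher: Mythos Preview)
The proposal is correct and follows essentially the same approach as the paper: a union-bound split on $\{\Pi_0\in\mC^{(L)}\}$, the monotonicity $\widehat c_{1-\alpha}^{(k_0)}\ge\widehat c_{1-\alpha}(\Pi_0)$ on that event, and the conditional quantile inequality drawn from the representation \eqref{eq:conditional-representation}, combined via the tower property. You are in fact a bit more careful than the paper by explicitly enlarging the conditioning to include $\mathcal{U}^L$ and by flagging the measurability subtlety around the data-dependent $\mC^{(L)}$, which the paper's own argument leaves implicit.
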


There are two terms in the right-hand side of \eqref{eq:marginal-validity-k0}.
The second term $\delta_L$ is the error from the localization, which is revealed in Theorem \ref{thm:recovery}.
The first term $\alpha$ is the type-I error without the localization. 
To derive the term, we define the composite critical value with fixed $\Pi$:
\begin{equation}\label{eq:c-composite-theory}
    c_{1-\alpha}^{(k_0)}:=\max_{\Pi\in {\mP}_{n,k_0}}c_{1-\alpha}(\Pi), \mbox{~~and~~}
    B^{(k_0)}_{1-\alpha}:=\{0,1,\ldots,c_{1-\alpha}^{(k_0)}\},
\end{equation}
where $c_{1-\alpha}(\Pi)$ denotes the $(1-\alpha)$-quantile of the conditional distribution of $D(\bm Y_{\bm \theta})$ given $(S_{1,\Pi}(\bm Y_{\bm \theta}),S_{2,\Pi}(\bm Y_{\bm \theta}))$, then we show $\bP(D(\bm Y_{\theta_0})\notin B^{(k_0)}_{1-\alpha}\mid S_{1,\Pi_0}(\bm Y_{\theta_0}),\ S_{2,\Pi_0}(\bm Y_{\theta_0}))\le \alpha$ (See Proposition \ref{thm:p-value_without_delta}).
The key ingredients are the conditional representation \eqref{eq:conditional-representation} and the fact that $c_{1-\alpha}^{(k_0)}$ is chosen uniformly over the localized null $\hat \Pi\in {\mP}_{n,k_0}$ 
In practice, we approximate the conditional quantiles by conditional Monte Carlo as described in Section \ref{subsubsection:testing permutation}.

\subsection{A finite-sample valid confidence set for the coefficient vector}

We next state the finite-sample coverage guarantee for the confidence region
constructed from the candidate permutation set $\mathcal{C}^{(L)}$.

\begin{theorem}[Coverage probability of the confidence set]\label{thm:confidence}
Suppose $C_{\min}(\Pi_0) > 0$ and $n > p + 1$ hold. Then, it holds that
\begin{equation}
         \bP_{\left(\mathcal{U}^L, \bm{U}\right)} \left( \bm{\beta}_0 \in \Gamma_{\alpha}^{\mathrm{coef}}(\bm{Y}^{\obs}) \right) \ge \alpha - \delta_L,
\end{equation}
where $\delta_L$ is specified in Theorem \ref{thm:recovery}.
\end{theorem}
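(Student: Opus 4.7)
The plan is to combine two ingredients: (i) the statistic $T(\bm Y, (\Pi_0, \bm\beta_0))$ is an exact pivot with an $F_{p, n-p}$ distribution at the true parameter, and (ii) on the event that $\mathcal{C}^{(L)}$ contains $\Pi_0$, the union that defines $\Gamma_{\alpha}^{\mathrm{coef}}$ already includes the $\Pi_0$-slice. A simple union bound then trades the nominal level $\alpha$ against the miscoverage rate $\delta_L$ supplied by Theorem \ref{thm:recovery}. Throughout I work with the random version $\bm Y = \Pi_0 \bm X \bm\beta_0 + \sigma_0 \bm U$ conditionally on the fixed design $\bm X$.

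The first step is to establish the pivotal distribution. Substituting $\bm Y - \Pi_0 \bm X \bm\beta_0 = \sigma_0 \bm U$ into the definition of $T$ cancels both $\bm\beta_0$ and $\sigma_0$, giving
\[
T(\bm Y, (\Pi_0, \bm\beta_0)) = \frac{\bm U^\top M_{\Pi_0 \bm X}\bm U/p}{\bm U^\top (I_n - M_{\Pi_0 \bm X})\bm U/(n-p)}.
\]
Because $\rank(\bm X) = p$ by Assumption \ref{ass:generic-design} and permutation matrices preserve rank, $M_{\Pi_0 \bm X}$ is an orthogonal projection of rank $p$. With $\bm U \sim \Normal(0, I_n)$, Cochran's theorem then yields $T(\bm Y, (\Pi_0, \bm\beta_0)) \sim F_{p, n-p}$, so that $\bP(T(\bm Y, (\Pi_0, \bm\beta_0)) \le F_{p, n-p}^{-1}(\alpha)) = \alpha$. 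This pivotal step uses only $\bm U$ and is independent of the auxiliary draws $\mathcal{U}^L$.

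The second step is the set inclusion: on the event $E := \{\Pi_0 \in \mathcal{C}^{(L)}\}$, the union $\Gamma_{\alpha}^{\mathrm{coef}}(\bm Y)$ contains the $\Pi_0$-slice $\{\bm\beta : T(\bm Y, (\Pi_0, \bm\beta)) \le F_{p, n-p}^{-1}(\alpha)\}$. Consequently,
\[
\{T(\bm Y, (\Pi_0, \bm\beta_0)) \le F_{p, n-p}^{-1}(\alpha)\} \cap E \subseteq \{\bm\beta_0 \in \Gamma_{\alpha}^{\mathrm{coef}}(\bm Y)\}.
\]
Taking complements and applying the two-event union bound $\bP(A^c \cup B^c) \le \bP(A^c) + \bP(B^c)$ gives
\[
\bP_{(\mathcal{U}^L, \bm U)}(\bm\beta_0 \notin \Gamma_{\alpha}^{\mathrm{coef}}(\bm Y)) \le \bP(T(\bm Y, (\Pi_0, \bm\beta_0)) > F_{p, n-p}^{-1}(\alpha)) + \bP(\Pi_0 \notin \mathcal{C}^{(L)}) \le (1-\alpha) + \delta_L,
\]
where the last inequality uses the pivot from the first step and the candidate-set bound from Theorem \ref{thm:recovery}. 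Rearranging yields the stated lower bound $\alpha - \delta_L$.

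There is no substantive obstacle here; the only bookkeeping point to be careful about is that $\Gamma_{\alpha}^{\mathrm{coef}}$ depends on $\mathcal{U}^L$ through $\mathcal{C}^{(L)}$, whereas the pivotal event depends only on $\bm U$. The union bound tolerates any joint dependence, so independence of $\mathcal{U}^L$ and $\bm U$ (which holds by construction) is not even needed; it only streamlines the verification that the pivot retains its $F_{p, n-p}$ law once one conditions on the design and on $\mathcal{U}^L$.
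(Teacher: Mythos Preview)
Your proposal is correct and follows essentially the same approach as the paper's proof: establish that $T(\bm Y,(\Pi_0,\bm\beta_0))\sim F_{p,n-p}$ under the model, use the set inclusion on the event $\{\Pi_0\in\mathcal{C}^{(L)}\}$, and combine with Theorem~\ref{thm:recovery} via a union bound to obtain $\bP(\bm\beta_0\notin\Gamma_\alpha^{\mathrm{coef}})\le (1-\alpha)+\delta_L$. Your version is slightly more explicit about the Cochran-type justification for the pivot and the dependence bookkeeping, but the argument is the same.
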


Theorem~\ref{thm:confidence} establishes that the coverage of $\Gamma_{\alpha}^{\mathrm{coef}}(\bm{Y}^{\obs})$ for $\bm{\beta}_0$ up to two small, vanishing errors: one from the localization for the candidate set and another from Monte Carlo approximation.  
As $L$ grows, the coverage converges to the nominal level~$\alpha$.  

Theorem \ref{thm:confidence} represents a \emph{locally simultaneous} form of selective inference, which is conducted uniformly over a localized, data-dependent neighborhood of plausible permutations $\hat{\mP}_{n,k_0}$ that contains the true alignment with high probability (as shown in Theorem \ref{thm:recovery}), instead of conditioning on a single realized permutation. Within this neighborhood, classical reference laws such as the $F$ distribution remain valid, providing finite-sample coverage without the need to describe the full combinatorial selection event. At the same time, it highlights a notion of robustness to alignment uncertainty. In particular, if the true data-generating alignment is not uniquely identifiable but lies within this candidate set (as in record linkage, partial mismatching, or anonymization), then inference on $\beta_0$ still remains valid.

\subsection{Algorithmic guarantees}

In this section, we show that the estimator defined as the solution to the LAP \eqref{def:tilde_Pi} proposed in Section \ref{sec:algorithm} 
coincides with the solution to the original least square problem \eqref{eq:repro_Pi} with high probability. 
To clarify the dependence of the solutions on $\bm{U}$ and $\bm{U}^*$, we denote the solution to the LAP \eqref{def:tilde_Pi} as $\tilde{\Pi}(\bm{U}, \bm{U}^*)$, and the solution to the original problem \eqref{eq:repro_Pi} as $\hat{\Pi}(\bm{U}, \bm{U}^*)$.

\begin{theorem}[High-probability equivalence]\label{thm:high-probability equivalence}
For $\xi \in (0,1)$, let $B_Y(\xi), B_{\rm diag}(\xi), \eta_{\rm op}(\xi), \underline\Delta_\xi$ be the positive constants defined later in Section~\ref{subsec: proof of hungarian}.
Choose $\lambda_1,\lambda_2\ge 0$ such that
\begin{equation}\label{eq:lambda-window}
\lambda_1+\lambda_2 B_{\rm diag}(\xi) + \eta_{\rm op}(\xi)
 < \frac{n \underline\Delta_\xi}{2k}
\end{equation}
with the condition $\lambda_1\ge B_Y(\xi)/k$.
Then, we have
\begin{equation}
    \bP_{\bm{U}, \bm{U}^*}\left(\hat{\Pi}(\bm{U}, \bm{U}^*) = \tilde{\Pi}(\bm{U}, \bm{U}^*) \right) \ge 1 - 4\xi.
\end{equation}
\end{theorem}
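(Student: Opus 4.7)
The plan is to reduce the equivalence $\hat\Pi(\bm U, \bm U^*) = \tilde\Pi(\bm U, \bm U^*)$ to a per-permutation comparison of the two objective values over $\mathcal{P}_{n,k}$, leveraging the decomposition \eqref{eq:lap-core-decomp}. Since $\sum_{ij}\1\{j\neq i\}\Pi_{ij} = d(\Pi, I_n)$ and $\sum_{ij}\1\{j=i\}(Y_i-m_i)^2\Pi_{ij} = \sum_i \Pi_{ii}(Y_i-m_i)^2$ for every permutation $\Pi$, the LAP objective in \eqref{def:tilde_Pi} rewrites as $G(\Pi) = 2T_1(\Pi) + \lambda_1 d(\Pi, I_n) - \lambda_2 \sum_i \Pi_{ii}(Y_i-m_i)^2$, whereas the LS criterion $L(\Pi) := \|(I_n - M_{(\Pi\bm X,\bm u^*)})\bm Y^{\obs}\|_2^2$ satisfies $2L(\Pi) = 2T_1(\Pi) + 2T_2(\Pi) + 2T_3$ with $T_3$ constant in $\Pi$. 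Using $T_2(I_n)=0$ as a reference (since $m_i = (M_{(\bm X, \bm u^*)}\bm Y^{\obs})_i$), the discrepancy $G(\Pi)-2L(\Pi)$ equals, up to a constant in $\Pi$, the expression $\lambda_1 d(\Pi,I_n) + \lambda_2\sum_{i\in S(\Pi)}(Y_i-m_i)^2 - 2T_2(\Pi)$, where $S(\Pi) := \{i:\Pi_{ii}=0\}$ has cardinality $d(\Pi,I_n)$.

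Next I would construct a high-probability event $E = \bigcap_{j=1}^4 E_j$ with $\bP(E)\ge 1-4\xi$, each $E_j$ corresponding to one of the four constants in the theorem. Specifically: (i) a bound on $\|\bm Y^{\obs}\|$ by $B_Y(\xi)$, used together with $\lambda_1 \ge B_Y(\xi)/k$ to ensure the unconstrained LAP minimizer over $\mathcal{P}_n$ automatically lies in $\mathcal{P}_{n,k}$; (ii) $\max_i (Y_i-m_i)^2 \le B_{\rm diag}(\xi)$, so that the $\lambda_2$-term is bounded by $\lambda_2 B_{\rm diag}(\xi)\, d(\Pi, I_n)$; (iii) a Lipschitz-type bound $|T_2(\Pi)-T_2(\Pi')| \le \eta_{\rm op}(\xi)\, d(\Pi, \Pi')$ uniformly over $\Pi, \Pi' \in \mathcal{P}_{n,k}$, quantifying how well the $(\lambda_1,\lambda_2)$-penalty approximates $2T_2$; and (iv) a stochastic LS separation $L(\Pi) - L(\hat\Pi) \ge (n\underline\Delta_\xi/(2k))\, d(\Pi, \hat\Pi)$ valid for every $\Pi \in \mathcal{P}_{n,k}$, obtained by lifting the deterministic gap $C_{\min}(\Pi_0)>0$ to the realized loss via Gaussian concentration on $\bm U$ and $\bm U^*$.

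Combining (ii) and (iii) with the triangle inequality (and $|S(\Pi)\triangle S(\Pi')|\le d(\Pi,\Pi')$), one obtains the uniform estimate
$$\left|(G(\Pi)-2L(\Pi)) - (G(\hat\Pi)-2L(\hat\Pi))\right| \le 2\left[\lambda_1 + \lambda_2 B_{\rm diag}(\xi) + \eta_{\rm op}(\xi)\right] d(\Pi, \hat\Pi).$$
The window condition \eqref{eq:lambda-window} then makes this discrepancy strictly smaller than $2[L(\Pi)-L(\hat\Pi)]$ by (iv), so $G(\Pi) > G(\hat\Pi)$ for every $\Pi\ne \hat\Pi$ in $\mathcal{P}_{n,k}$. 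Combined with (i) that forces $\tilde\Pi \in \mathcal{P}_{n,k}$, this yields $\tilde\Pi = \hat\Pi$ on $E$, so $\bP_{\bm U,\bm U^*}(\tilde\Pi = \hat\Pi) \ge \bP(E) \ge 1 - 4\xi$.

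The principal technical obstacle is the uniform Lipschitz bound (iii) for $T_2$ over the factorial set $\mathcal{P}_{n,k}$. Using the identity $M_{(\Pi\bm X,\bm u^*)} = \Pi M_{(\bm X, \Pi^\top\bm u^*)}\Pi^\top$, $T_2(\Pi)$ depends on $\Pi$ only through a rank-one update of a base projector driven by $\Pi^\top\bm u^*$, so its increment under a single Hamming-one swap reduces to a scalar functional of $\bm u^*$ and two rows of $\bm X$. I would then control the maximum such increment by a covering argument over $k$-sparse permutations, whose log-cardinality is $O(k\log n)$, combined with standard Gaussian tail bounds on $\bm u^*$ and spectral control of $\bm X$. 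The separation bound (iv) requires a parallel union-bound argument that lifts $C_{\min}(\Pi_0)$ to the observed LS loss, which is the next most delicate ingredient.
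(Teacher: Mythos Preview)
Your proposal is essentially correct and follows the same skeleton as the paper's proof: define four high-probability events (the paper calls them $\mathcal{E}_{\rm op}$, $\mathcal{E}_{\rm diag}$, $\mathcal{E}_{\rm gap}$, plus the bound on $\|\bm Y\|_2^2$), control the discrepancy between $G$ and $2F$ (your $2L$), use the separation gap to conclude $G(\Pi)>G(\hat\Pi)$ for all $\Pi\neq\hat\Pi$ in $\mP_{n,k}$, and use the $\lambda_1\ge B_Y(\xi)/k$ condition to force the unconstrained LAP minimizer into $\mP_{n,k}$.

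Two points where you diverge from the paper are worth noting. First, your separation bound (iv) is stated too strongly: the paper only proves the \emph{constant} lower bound $F(\Pi)-F(\hat\Pi)\ge n\underline\Delta_\xi$ for all $\Pi\neq\hat\Pi$ (Proposition on the $F$ separation gap), not one that scales with $d(\Pi,\hat\Pi)$. Your argument still goes through with the constant bound because $d(\Pi,\hat\Pi)\le 2k$ for $\Pi,\hat\Pi\in\mP_{n,k}$, so your displayed discrepancy is at most $2[\lambda_1+\lambda_2 B_{\rm diag}(\xi)+\eta_{\rm op}(\xi)]\cdot 2k < 2n\underline\Delta_\xi \le 2(F(\Pi)-F(\hat\Pi))$ by the window condition. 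Second, for (iii) the paper does not use a covering argument on swap increments; it observes directly that $T_2(\Pi)=\bm Y^\top(\Pi M_{(\bm X,\bm u^*)}-M_{(\Pi\bm X,\bm u^*)})\bm Y$ and bounds $|T_2(\Pi)|\le \|\bm Y\|_2^2\cdot\|\Pi M_{(\bm X,\bm u^*)}-M_{(\Pi\bm X,\bm u^*)}\|_{\rm op}$, then controls this operator norm uniformly over a transposition path in $\mP_{n,k}$ via a rank-one projector identity (their Lemma on projection differences). This avoids the log-cardinality overhead of covering $\mP_{n,k}$ and gives the explicit form of $\eta_{\rm op}(\xi)$. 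The paper then telescopes along a transposition path of length $r\le 2k$ rather than working with Hamming distance directly; the two framings give equivalent final bounds.
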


About the constraint \eqref{eq:lambda-window}, $B_{\rm diag}(\xi)$ is the upper bound of the largest diagonal residual and $\eta_{\rm op}(\xi)$
controls the projection-mismatch error (see \eqref{eq:bdiag} and \eqref{eq:eta-op-def}),
while $\underline\Delta_\xi$ is an explicit lower bound on the gap of the objective function
(See Proposition \ref{prop:F separation gap}).

Theorem~\ref{thm:high-probability equivalence} establishes a nonasymptotic argmin equivalence between a single linear assignment problem and the least-squares objective. Specifically, under an explicit finite-sample margin condition (quantified by $\underline{\Delta}_\xi$) and a transparent tuning window \eqref{eq:lambda-window}, the solution of the score-weighted LAP is identical to the global minimizer of the least square problem with probability at least $1-4\xi$.
The result provides an exact, rather than approximate, optimization correspondence under explicitly verifiable finite-sample conditions.
To our knowledge, this is the first formal proof of finite-sample equivalence between an $O(n^3)$-time assignment solver (Hungarian algorithm) and the global minimizer of a nonlinear least-squares problem defined over the combinatorially large permutation space.

\section{Extensions}\label{sec:extensions}

\subsection{Partially permuted designs}\label{subsec:partial-design}

In some applications the covariates can be decomposed as $(\bm X,\bm Z)$, where only $\bm X$ is
misaligned with the response.  We consider the partially permuted
Gaussian linear regression model
\begin{equation}\label{eq:model_partial}
\bm Y = \Pi_0 \bm X \bm\beta_{1,0} + \bm Z \bm\beta_{2,0} + \sigma_0 \bm U,
\qquad
\bm U\sim N(0,I_n),
\end{equation}
where $\bm X\in\mathbb R^{n\times p_1}$ and $\bm Z\in\mathbb R^{n\times p_2}$ are fixed.

For any $\Pi\in\mP_{n,k}$, define the expanded design matrix
\begin{equation}\label{eq:Wpartial-def}
W(\Pi):=\big[\Pi\bm X,\ \bm Z\big]\in\mathbb R^{n\times (p_1+p_2)},
\qquad
M_{W(\Pi)}:=W(\Pi)\left(W(\Pi)^\top W(\Pi)\right)^{-1}W(\Pi)^\top.
\end{equation}
Then the objective \eqref{eq:repro_Pi} becomes
\begin{equation}\label{eq:repro_Pi_partial}
F_{\rm partial}(\Pi):=\left\| (I_n-M_{W(\Pi)})\bm Y^{\obs}\right\|_2^2.
\end{equation}

Permutation candidates are generated exactly as in Section \ref{subsec:candidate}
by augmenting $W(\Pi)$ with the random direction $\bm u_\ell^*$:
\[
\widehat\Pi_{\ell}^{\rm partial}
\in \arg\min_{\Pi\in\mP_{n,k}}
\left\| (I_n - M_{(W(\Pi),\bm u_{\ell}^*)})\bm Y^{\obs}\right\|_2^2,
\qquad
\ell=1,\dots,L,
\]
and we define the candidate set
\begin{equation}\label{eq:perm-candset-partial}
\mathcal{C}_{\rm partial}^{(L)}
:=\left\{
\Pi\in\mP_{n,k}:\ \exists \ell\in[L]\ \text{s.t.}\ \widehat\Pi_{\ell}^{\rm partial}=\Pi
\right\}.
\end{equation}
Theorem~\ref{thm:recovery partial} shows that, under a
separation condition $C_{\min}^{\rm partial}(\Pi_0)>0$, the miscoverage probability
$\bP(\Pi_0\notin \mathcal{C}_{\rm partial}^{(L)})$ is bounded by
\[
\delta_L^{\rm partial}
=O\left(
\frac{(\log L)^{(n-(p_1+p_2))/2}}{L^{((n-(p_1+p_2))-1)/(2(n-1))}}
\right).
\]

Given $\Pi\in\mathcal{C}_{\rm partial}^{(L)}$, a joint confidence set for $(\bm\beta_1,\bm\beta_2)$ 
can be built using the standard $F_{p_1+p_2,n-(p_1+p_2)}$-distribution based on $W(\Pi)$:
\begin{equation}\label{eq:Tjointpartial-def}
T(\bm Y^{\obs},(\Pi,\bm\beta_1,\bm\beta_2))
:=\frac{n-(p_1+p_2)}{p_1+p_2}
\frac{\left(\bm Y^{\obs}-W(\Pi)\binom{\bm\beta_1}{\bm\beta_2}\right)^\top M_{W(\Pi)}
      \left(\bm Y^{\obs}-W(\Pi)\binom{\bm\beta_1}{\bm\beta_2}\right)}
     {\left(\bm Y^{\obs}-W(\Pi)\binom{\bm\beta_1}{\bm\beta_2}\right)^\top (I_n-M_{W(\Pi)})
      \left(\bm Y^{\obs}-W(\Pi)\binom{\bm\beta_1}{\bm\beta_2}\right)},
\end{equation}
and resulting $\alpha$-level confidence set is given by
\begin{equation}\label{eq:confset-partial-joint}
\Gamma_{\alpha}^{\mathrm{joint}}(\bm{Y}^{\obs})
:=\bigcup_{\Pi \in \mathcal{C}_{\mathrm{partial}}^{(L)}} \left\{
(\bm\beta,\bm\beta')\in \R^{p_1+p_2}:\ 
T(\bm Y^{\obs},(\Pi,\bm\beta,\bm\beta')) \le F_{p_1+p_2,n-(p_1+p_2)}^{-1}(\alpha)
\right\}.
\end{equation}

If the main interest is $\bm\beta_1$ with $\bm\beta_2$ treated as a nuisance parameter, we use the
partialled--out $F$-distribution.  Let $M_{\bm Z}$ be the orthogonal projector onto $\range(\bm Z)$ and
set $P_{\bm Z}:=I_n-M_{\bm Z}$. For any $\Pi\in\mP_{n,k}$ and $\bm\beta_1\in\mathbb R^{p_1}$,
define
\begin{equation}\label{eq:Tpartial-def}
T(\bm Y^{\obs},(\Pi,\bm\beta_1)\mid \bm Z)
:=\frac{n-(p_1+p_2)}{p_1}
\frac{
(\bm Y^{\obs}-\Pi\bm X\bm\beta_1)^\top (M_{W(\Pi)}-M_{\bm Z}) (\bm Y^{\obs}-\Pi\bm X\bm\beta_1)
}{
(\bm Y^{\obs}-\Pi\bm X\bm\beta_1)^\top (I_n-M_{W(\Pi)}) (\bm Y^{\obs}-\Pi\bm X\bm\beta_1)
}.
\end{equation}
The resulting $\alpha$-level confidence set is given by
\[
\Gamma_{\alpha}^{\rm partial}(\bm Y^{\obs})
:=\bigcup_{\Pi\in\mathcal C_{\rm partial}^{(L)}}
\left\{
\bm\beta_1\in\mathbb R^{p_1}:\ 
T(\bm Y^{\obs},(\Pi,\bm\beta_1)\mid \bm Z)\le F_{p_1,n-(p_1+p_2)}^{-1}(\alpha)
\right\},
\]
and its coverage guarantee is shown in Theorem~\ref{thm:confidence set partial beta1}.

\subsection{Permuted ridge regression}
Our methodology can also be extended to a permuted regression model with a ridge penalty.
Consider the data generating process analogous to~\eqref{eq:random},
\[
\bm Y^{\mathrm{obs}}=\Pi_{0}\bm X\bm{\beta}_{0}+\sigma_{0}\bm U,
\qquad \Pi_0\in\mP_{n,k}.
\]
For a fixed $\lambda>0$, define the ridge objective
\[
F_\lambda(\Pi):=\min_{\bm{\beta}\in\R^p}
\left\{\|\bm Y^{\mathrm{obs}}-\Pi\bm X\bm{\beta}\|_2^2+\lambda\|\bm{\beta}\|_2^2\right\},
\qquad \Pi\in\mP_{n,k}.
\]

It is convenient to rewrite the ridge objective in an augmented least-squares form
(e.g.,~\cite{hastie2020ridge}). Define
\[
\bm Y^{\mathrm{aug}}:=\begin{pmatrix}\bm Y^{\mathrm{obs}}\\ \bm 0_p\end{pmatrix}\in\R^{n+p},
\qquad
\bm X^{\mathrm{aug}}:=\begin{pmatrix}\bm X\\ \sqrt{\lambda}I_p\end{pmatrix}\in\R^{(n+p)\times p},
\qquad
\tilde\Pi:=\begin{pmatrix}\Pi & 0\\ 0 & I_p\end{pmatrix}\in\mathcal P_{n+p}.
\]
Then $F_\lambda(\Pi)=\min_{\bm{\beta}}\|\bm Y^{\mathrm{aug}}-\tilde\Pi\bm X^{\mathrm{aug}}\bm{\beta}\|_2^2$.
Let $\tilde\Pi_0$ denote the block-diagonal embedding of $\Pi_0$, namely
\[
\tilde\Pi_0 := \begin{pmatrix}\Pi_0 & 0\\ 0 & I_p\end{pmatrix}\in\mathcal P_{n+p},
\]
and define the augmented noise $\tilde{\bm U}:=(\bm U,\bm 0_p)\in\R^{n+p}$.
Under the model we have $\bm Y^{\mathrm{aug}}=\tilde\Pi_0\bm X^{\mathrm{aug}}\bm{\beta}_0+\sigma_0\tilde{\bm U}$.
For $\Pi\in\mP_{n,k}$ write $
W_\lambda(\Pi):=\tilde\Pi\bm X^{\mathrm{aug}}\in\R^{(n+p)\times p}$.

For i.i.d.\ $\bm U^*_1,\dots,\bm U^*_L\sim N(0,I_n)$ let
$\tilde{\bm U}^*_\ell:=(\bm U^*_\ell,\bm 0_p)$ and define
\begin{equation}\label{eq:perm_candidates_ridge}
\hat\Pi_{\lambda,\ell}\in
\arg\min_{\Pi\in\mP_{n,k}}
\left\|
(I_{n+p}-M_{(W_\lambda(\Pi),\tilde{\bm U}^*_\ell)})\bm Y^{\mathrm{aug}}
\right\|_2^2,
\qquad
\mC_\lambda^{(L)}:=\{\hat\Pi_{\lambda,1},\dots,\hat\Pi_{\lambda,L}\}.
\end{equation}

\begin{remark} In the augmented representation, the last $p$ pseudo-observations are noiseless, so the projected augmented noise is no longer exactly spherically symmetric when $\lambda$ is very small. To keep the theory parallel to the linear regression case, we impose the mild non-degeneracy condition \begin{equation}\label{eq:ridge_lambda_lower} c_\lambda:=\frac{\lambda}{\lambda+\|\bm X\|_{\mathrm{op}}^2}\ge \underline c\in(0,1), \end{equation} under which the argument used in Lemma~\ref{lem:max-sim} carries over up to constants (see Lemma~\ref{lem:max_sim_ridge} in Appendix~\ref{subsec:ridge_theory}). \end{remark}

\section{Selection of tuning parameters}\label{sec:tuning}

We propose a selection method of the two regularization parameters $(\lambda_1,\lambda_2)$ for the score-weighted LAP in Section~\ref{sec:algorithm}. 

Specifically, we select the parameters as satisfying the constraint \eqref{eq:lambda-window} in Theorem \ref{thm:high-probability equivalence}. 
Throughout this section, we fix $\xi=0.05$.

The constraint \eqref{eq:lambda-window} is stated in terms of the explicit lower bound
$n\underline\Delta_\xi$.
At the proof level, the comparison is with the (random) separation gap
$\Delta_F$, which will be defined in \eqref{eq:delta-F}, and Proposition~\ref{prop:F separation gap} shows that
$\Delta_F\ge n\underline\Delta_\xi$ with probability at least $1-\xi$.
Motivated by this connection, our implementation chooses $(\lambda_1,\lambda_2)$ via simple
plug-in estimates of $B_{\rm diag}(\xi)$, $\eta_{\rm op}(\xi)$, and $\Delta_F$.

\begin{enumerate}
\item Compute the empirical analogs $\widehat{\bm m} = M_{(\bm X,\bm u^*)}\bm Y^{\obs}$ and set $
      \widehat B_{\rm diag}:=\max_{1\le i\le n}\left(Y_i^{\obs}-\widehat m_i\right)^2$.
\item Evaluate the closed-form expression of $\eta_{\rm op}(\xi)$ in \eqref{eq:eta-op-def} by
      replacing the unknown norms by the observable quantity $\|\bm Y^{\obs}\|_2^2$,
      yielding $\widehat\eta_{\rm op}$.
\item Estimate the separation gap $\Delta_F$ in \eqref{eq:delta-F} as follows.
      First, compute a baseline $k$-sparse permutation $\Pi_{\rm base}$ as a solution of the score-weighted LAP with $(\lambda_1,\lambda_2)=(0,0)$. Next, generate a small collection of
      neighbouring permutations by applying random swaps to two rows of $\Pi_{\rm base}$, and compute
      $F(\Pi)-F(\Pi_{\rm base})$ for these candidates. We set $\widehat\Delta_F$ to be the minimum of
      the positive differences.
    
\item Define an available budget $
      b:=0.9(\frac{\widehat\Delta_F}{2k}-\widehat\eta_{\rm op})_+$,
      where the constant $0.9$ is a fixed safety factor to hedge against estimation error.
      Finally, split the budget equally between $\lambda_1$ and $\lambda_2$ as $
      \lambda_1=\frac{b}{2}$ and $\lambda_2=\frac{b}{2\max\{\widehat B_{\rm diag},10^{-12}\}}$.
\end{enumerate}

Through our experiments in Section \ref{sec:numerical}, we adopt these tuning parameters as a default rule.

\section{Numerical results}\label{sec:numerical}

\subsection{Simulation analysis}
In this section, we assess the finite-sample performance of the proposed method.
We focus on three aspects:
(i) the size and coverage probability of the candidate set (Section \ref{subsec:candidate});
(ii) the size and power of the conditional test for the existence of a permutation (Section \ref{subsubsection:testing permutation}); and
(iii) the coverage probability and the volume of the confidence set for the coefficient vector (Section \ref{subsec:coef}).

All experiments are based on the shuffled linear regression model
\[
\bm{Y} = \Pi_0 \bm{X}\bm{\beta}_0 + \sigma_0 \bm{U}, \qquad \bm{U}\sim\Normal(\bm{0}, I_n),
\]
where $\bm{\beta}_0=(0.5,-1,2)^\top$ and the design matrix $\bm{X} \sim\Normal(\bm{0},I_p)$ with $p=3$.
For each configuration, $\Pi_0$ and $\bm{X}$ are fixed across Monte Carlo replications.

We vary the sample size $n\in\{100,200,400\}$, the number of mismatches $k\in\{0,2,5,10,15,20\}$,
the noise level $\sigma_0\in\{0.01,0.05,0.10,0.20,0.30,0.40,0.50\}$,
and the number of repro samples $L\in\{200,400\}$.
Each setting is replicated $200$ times.
The repro step employs the score-weighted LAP (Algorithm~\ref{alg:scoring hungarian})
with tuning parameters $(\xi,\rho,\alpha_{\mathrm{diag}})=(0.05,0.9,0.5)$ specified in Section \ref{sec:tuning}.

\subsubsection{Performance of the candidate set}
We assess the empirical performance of the candidate set $\mathcal{C}^{(L)}$.
For each configuration, we compute the average size of $\mathcal{C}^{(L)}$ (the number of unique permutations) across Monte Carlo replications, the empirical probability that the true permutation $\Pi_0$ is contained in $\mathcal{C}^{(L)}$, and the average matching fraction defined below.

As summarized in Figures~\ref{fig:size candidate},
the size of $\mathcal{C}^{(L)}$  increases with the noise level $\sigma_0$ and the mismatch rate $k$,
reflecting greater combinatorial ambiguity under weaker signal or stronger misalignment, in contrast, larger $n$ yields smaller and more stable candidate sets.
To evaluate the alignment quality, we report a matching-fraction diagnostic that measures
how closely the best candidate permutation aligns with the true $\Pi_0$.
For each Monte Carlo iteration, we compute the normalized complement of the
Hamming distance $
\psi = 1 - \frac{1}{n} \min_{1 \le \ell \le L} d(\widehat{\Pi}_\ell, \Pi_0)$,
which represents the proportion of correctly matched indices in the best case among the $L$ repro samples.
We then average $\psi$ across the Monte Carlo replications and report it in Figure~\ref{fig:fraction}.
The matching fraction decreases with increasing $\sigma_0$ and $k$,
improves with larger $n$, and approaches $100\%$ when $k=0$.
Although perfect recovery is generally hard due to the discrete nature of permutation matrices,
the candidate sets achieve sufficiently high matching fractions,
indicating that most columns are correctly aligned even under moderate noise.
These results confirm that repro-sample localization effectively narrows the permutation space
while maintaining near-perfect inclusion of the true $\Pi_0$.

\subsubsection{Size and power of the test for the existence of a permutation}
Next, we assess the finite-sample size and power of the conditional Monte Carlo test described in Section \ref{subsubsection:testing permutation}.
We use the same baseline model and parameter grid as above.
For each configuration, we record whether the null $H_0:\Pi_0=I_n$ is rejected at nominal level $\alpha=0.05$.
The empirical rejection rates, averaged over $200$ repetitions,
are plotted as a function of $\sigma_0$ in Appendix~Figure~\ref{fig:test size power}.
When $k=0$ (no mismatch), the rejection frequency remains at or below the nominal significance level across all $\sigma_0$,
demonstrating valid Type-I error control guaranteed by Theorem~\ref{thm:p-value}.
As the number of mismatches $k$ increases, the rejection probability rises monotonically, indicating increasing power under stronger alternatives.
Larger sample sizes $n$ yield slightly higher power at moderate noise levels, reflecting the greater information available for detecting mismatches.
Differences between $L=200$ and $L=400$ are modest.

\subsubsection{Coverage probability of the confidence set for the coefficient vector}
Finally, we evaluate the finite-sample coverage and size of the confidence sets
constructed for the regression coefficient vector $\bm{\beta}_0$
using the union method described in Section~\ref{subsec:coef}.
For each configuration, we construct the confidence set $
\Gamma_\alpha^{\mathrm{coef}}(\bm{Y}^{\mathrm{obs}})$ with $\alpha=0.95$, and record whether $\bm{\beta}_0$ lies inside the region as well as its numerical volume.

The results in Figures \ref{fig:coverage coef} shows that the empirical coverage probability closely matches the nominal level across all noise regimes,
verifying the finite-sample validity guaranteed by Theorem~\ref{thm:confidence}. 
In addition to the scalar summaries of coverage, we visualize the geometry of the constructed confidence region.
Figure~\ref{fig:repro_union_three_planes} shows three two-dimensional projections of the union confidence set
onto the coordinate planes
for one representative configuration ($n=100$, $k=5$, $L=200$, $\sigma_0=0.10$).
In all projections, the true parameter $\bm{\beta}_0=(0.5,-1,2)^\top$ lies well within the confidence region,
demonstrating that the procedure is consistent with the finite-sample guarantee.
Moreover, the projected regions are compact and tightly concentrated around the true value,
illustrating that taking unions over $\mC^{(L)}$
does not lead to excessive conservatism: the volume of confidence regions are also reported in Figure \ref{fig:app-coef-volume}.

\begin{figure}[htbp]
    \centering
    \begin{subfigure}[b]{0.32\textwidth}
        \centering
        \includegraphics[width=\textwidth]{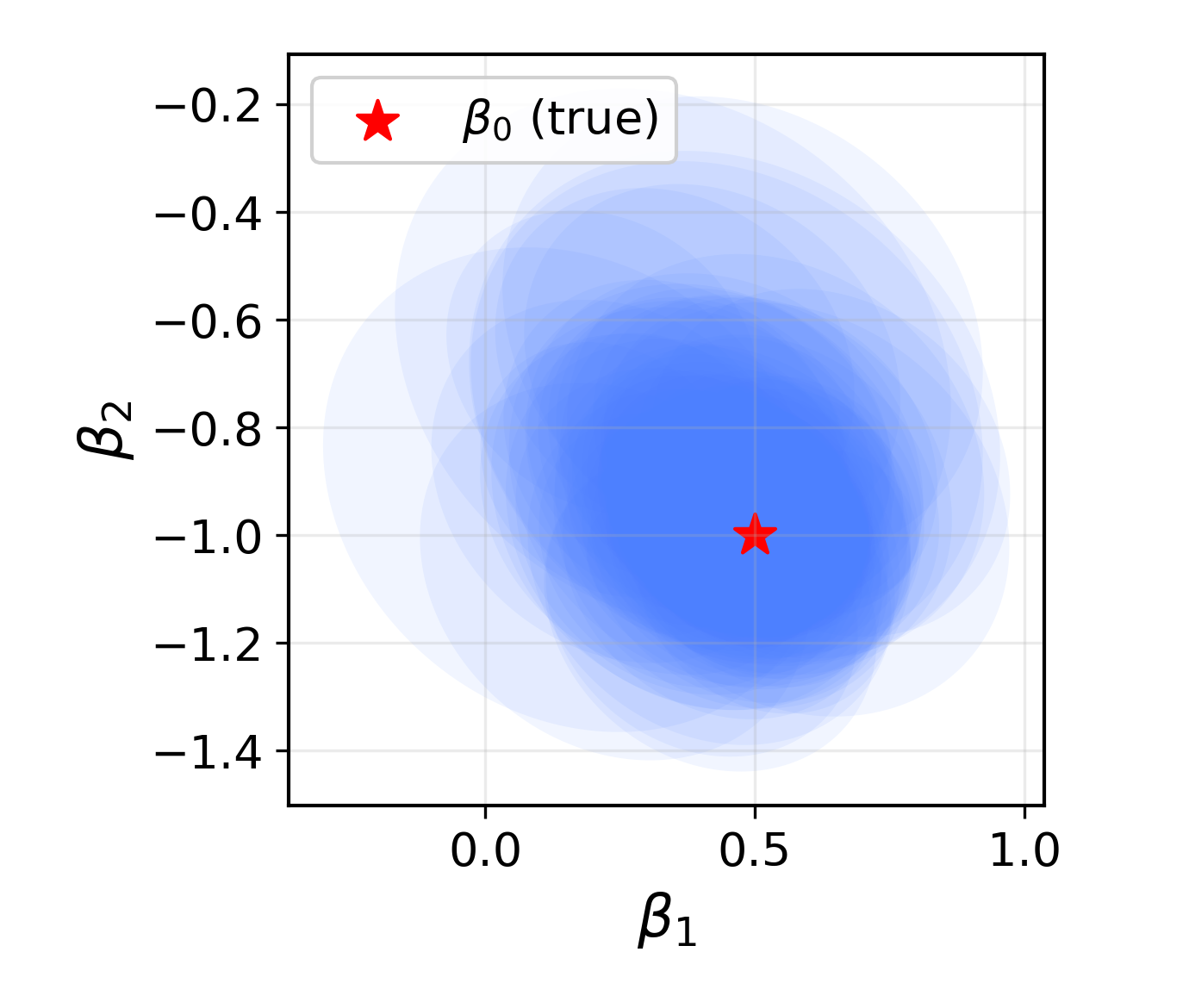}
        \caption{Projection onto $(\beta_1,\ \beta_2)$}
    \end{subfigure}
    \hfill
    \begin{subfigure}[b]{0.32\textwidth}
        \centering
        \includegraphics[width=\textwidth]{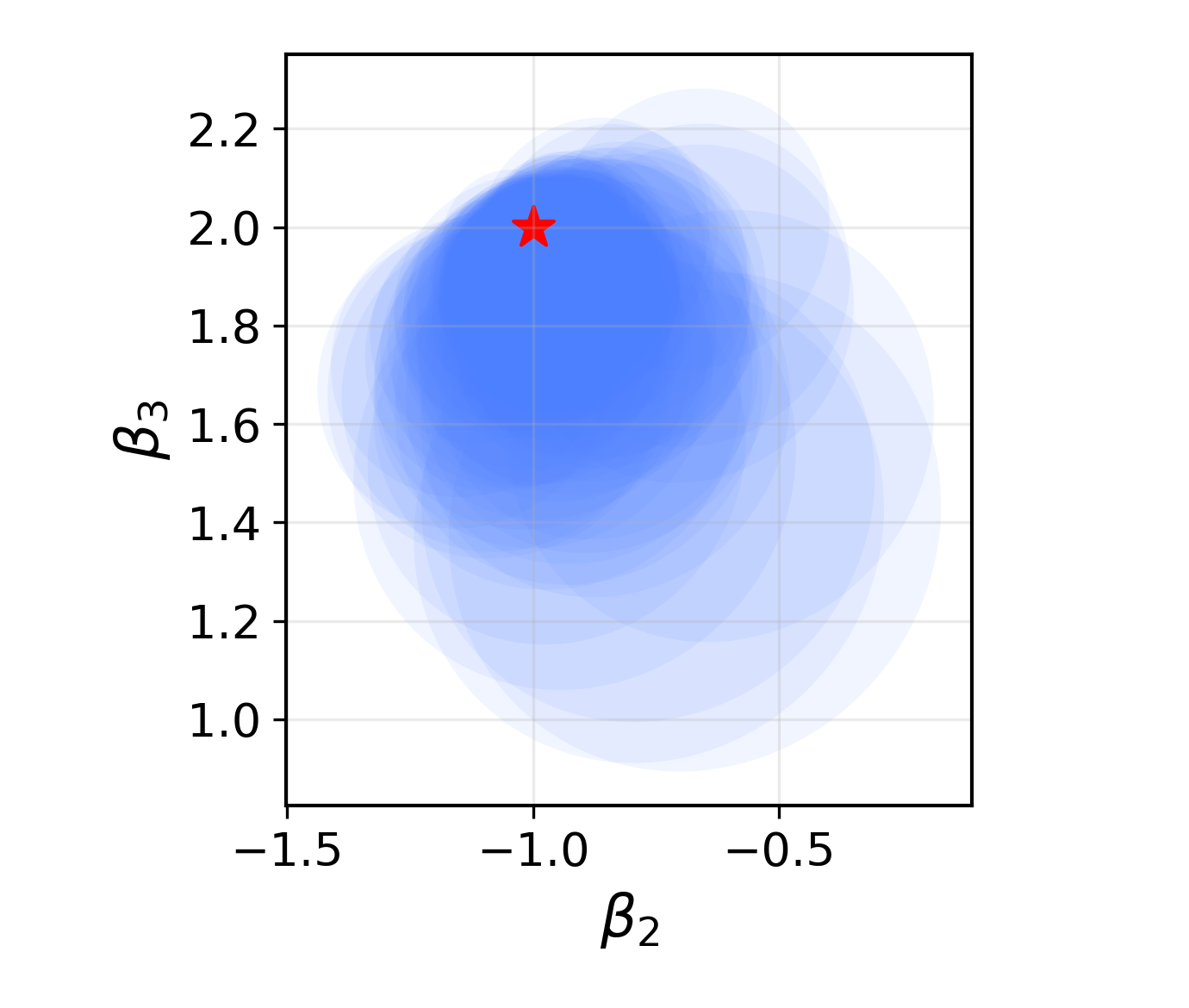}
        \caption{Projection onto $(\beta_2,\ \beta_3)$}
    \end{subfigure}
    \hfill
    \begin{subfigure}[b]{0.32\textwidth}
        \centering
        \includegraphics[width=\textwidth]{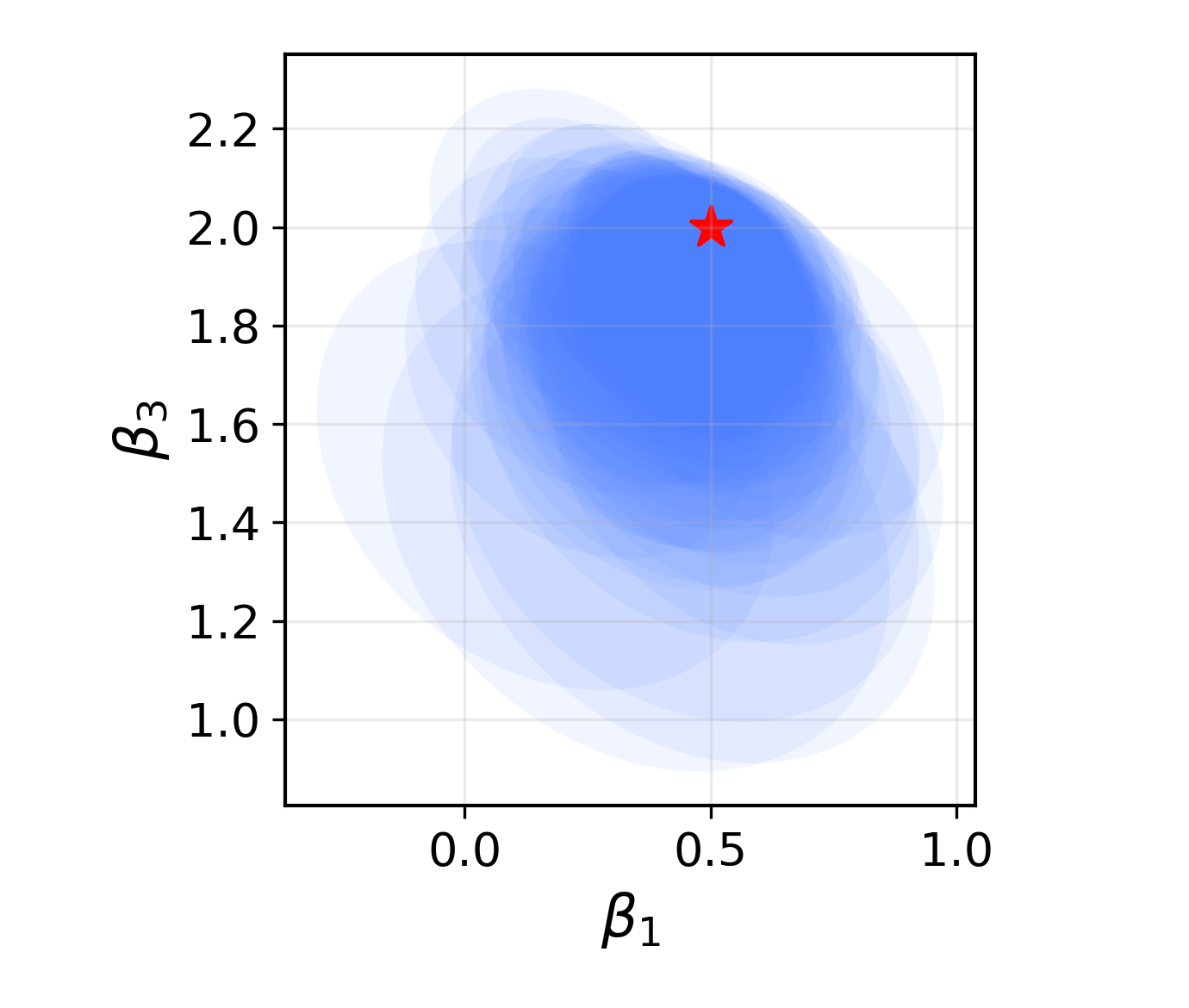}
        \caption{Projection onto $(\beta_1,\ \beta_3)$}
    \end{subfigure}
    \caption{Two-dimensional projections of the union confidence set.}
    \label{fig:repro_union_three_planes}
\end{figure}

\begin{figure}[htbp]
  \centering
  \begin{subfigure}[t]{0.48\linewidth}
    \centering
    \includegraphics[width=\linewidth]{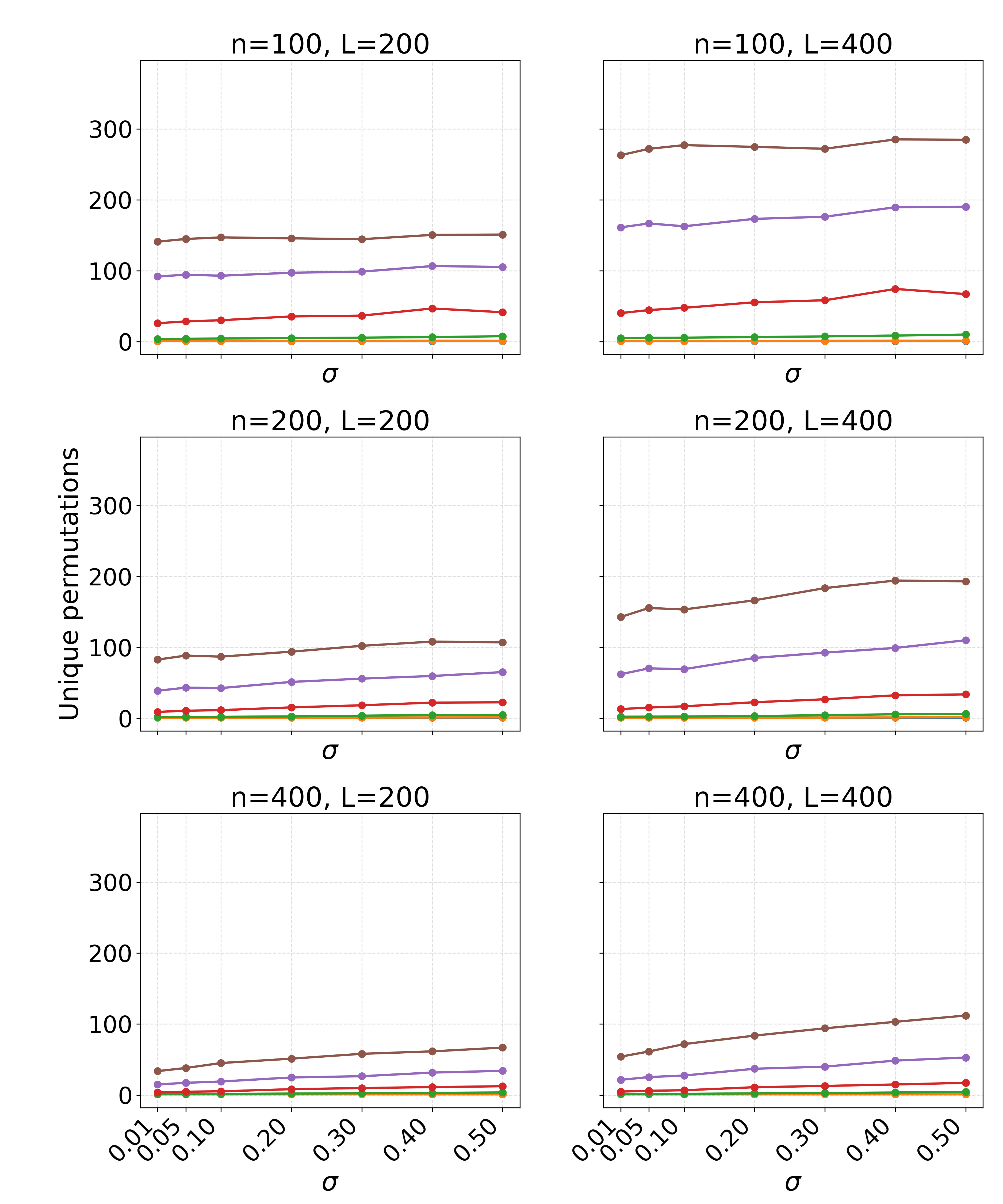}
    \caption{Unique permutations in $\mC^{(L)}$.}
    \label{fig:size candidate}
  \end{subfigure}\hfill
  \begin{subfigure}[t]{0.48\linewidth}
    \centering
        \includegraphics[width=\linewidth]{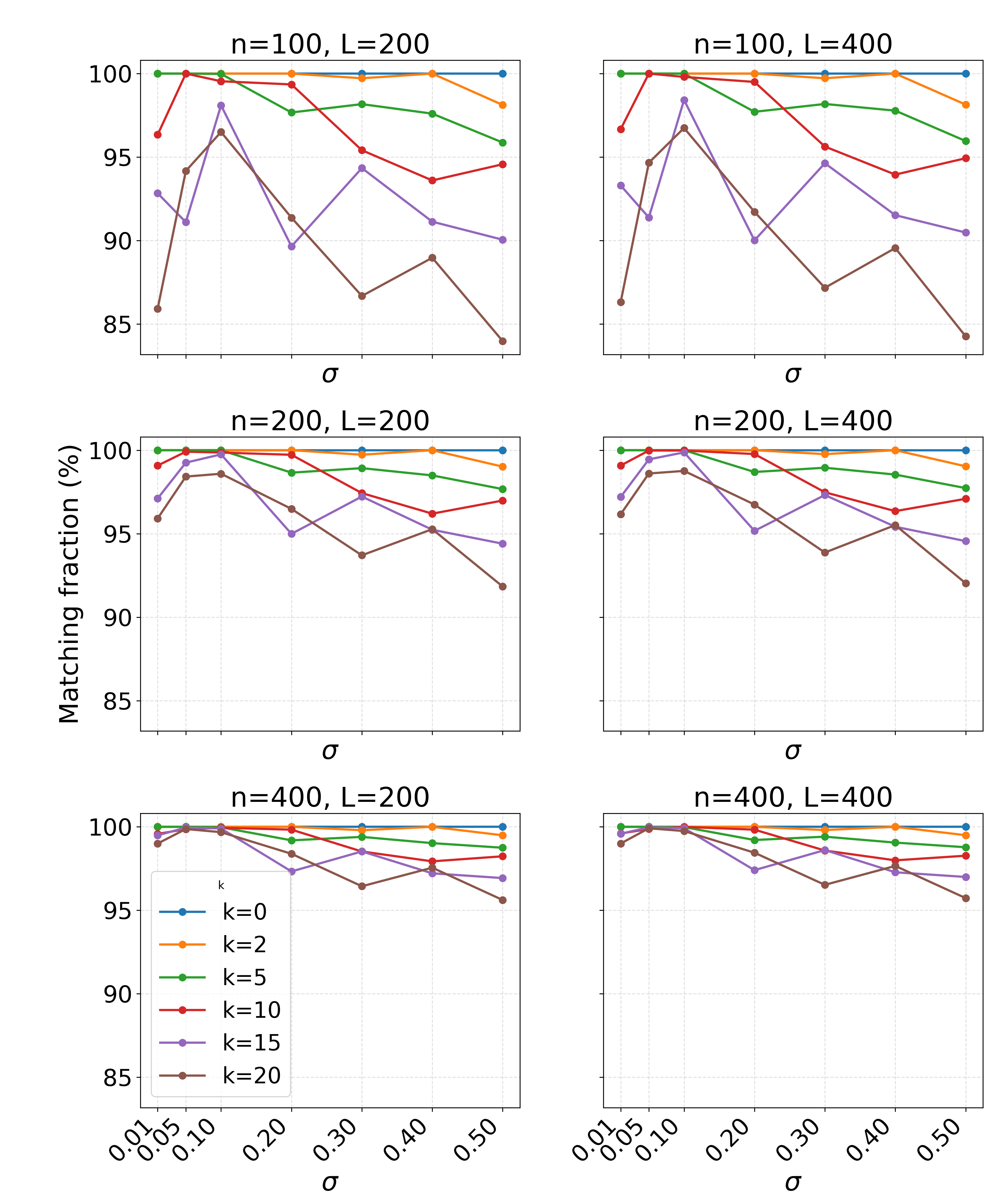}
    \caption{Matching fraction (\%).}
    \label{fig:fraction}
  \end{subfigure}

  \vspace{0.6em}

  \begin{subfigure}[t]{0.48\linewidth}
    \centering
    \includegraphics[width=\linewidth]{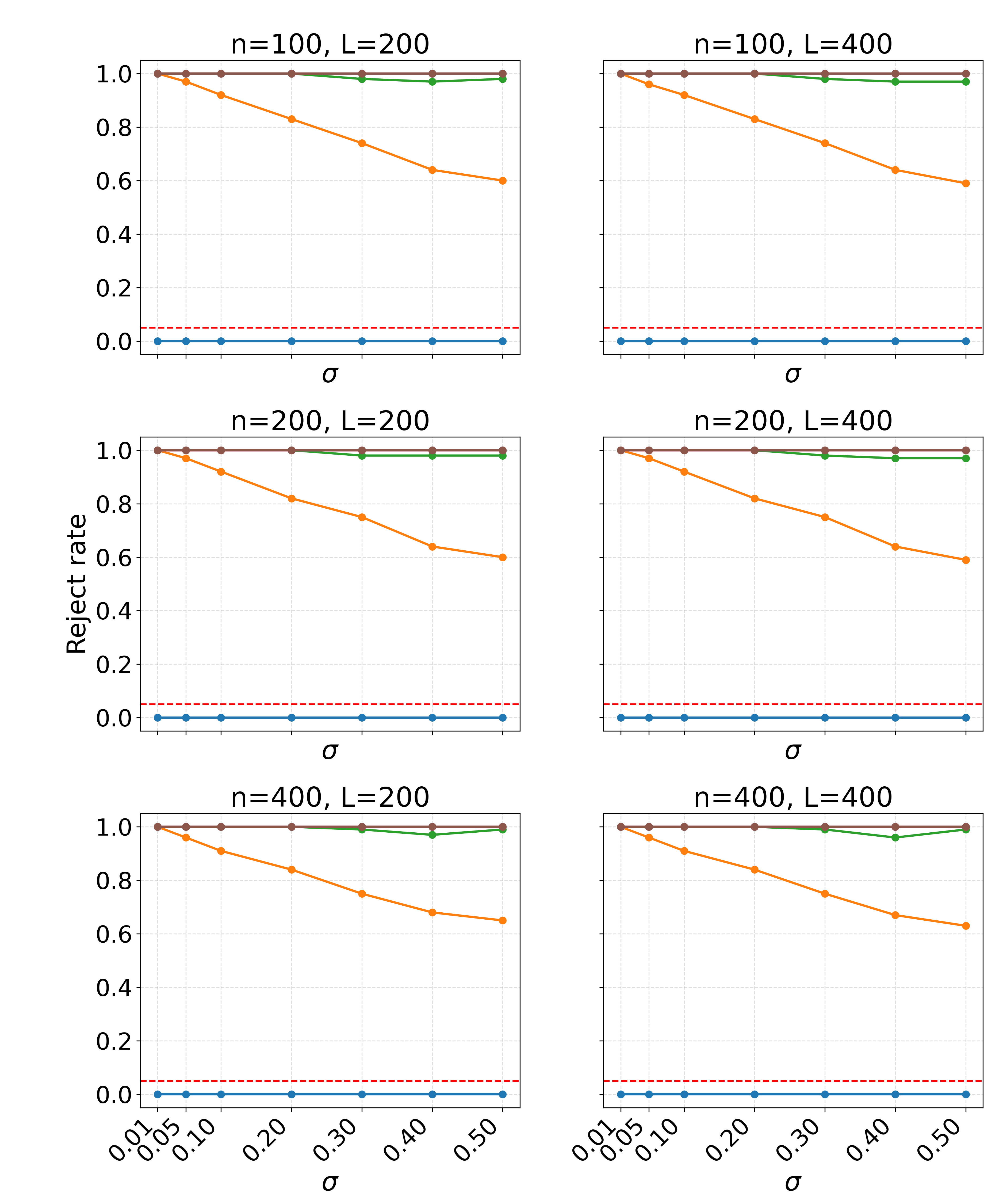}
    \caption{Rejection rate for $k=0$ (size) and power for $k>0$; nominal $\alpha=0.05$ shown (red dotted).}
    \label{fig:test size power}
  \end{subfigure}\hfill
  \begin{subfigure}[t]{0.48\linewidth}
    \centering
    \includegraphics[width=\linewidth]{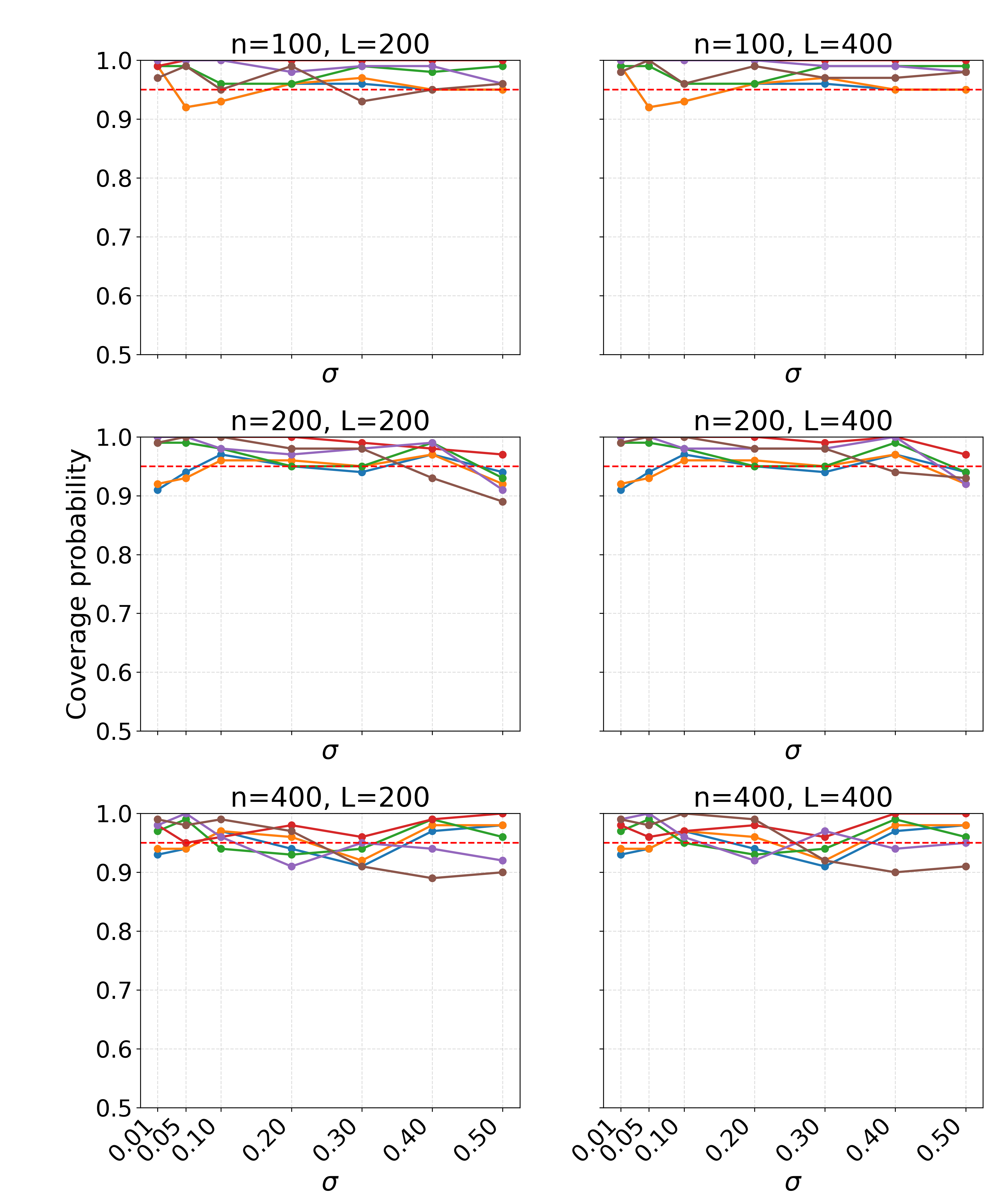}
    \caption{Coverage probability of coefficient regions; nominal $\alpha=0.95$ shown (red dotted).}
    \label{fig:coverage coef}
  \end{subfigure}
  \caption{Characteristics of the candidate set $\mC^{(L)}$.  \label{fig:simulations}}
\end{figure}

\subsection{Empirical application: Beijing air-quality data}\label{subsec:real data}

We illustrate the proposed finite-sample inference for shuffled regression using the Beijing Multi-Site Air Quality dataset \citep{beijing_multi-site_air_quality_501}. The objective is to verify that the permutation test behaves consistently with the theoretical guarantees. To this end, we construct two controlled scenarios on the real data: a baseline with no mismatch and a perturbed setting in which we artificially shuffle the response–covariate matching within short time windows. The dataset reports hourly concentrations of six major air pollutants together with meteorological covariates from twelve stations between March~2013 and February~2017.  
Following the empirical design of \cite{slawski2020two}, we focus on the Nongzhanguan station and restrict the sample to the years 2016–2017.  
After removing rows with missing entries, we standardize each variable to zero mean and unit variance and extract a contiguous window of $n=1000$ hourly observations (approximately six weeks of data).  
The response variable $\bm{Y}$ is the concentration of PM$_{2.5}$, and the design matrix $\bm X$ contains $p=10$ continuous covariates: PM$_{10}$, SO$_2$, NO$_2$, CO, O$_3$, temperature, dew point, pressure, precipitation, and wind speed.  
All computations employ the score-weighted LAP described in Section~\ref{sec:methodology} within the repro samples framework.

We analyze two complementary scenarios.  
First, a sparse and local mismatch is introduced: $8\%$ of the rows of $\bm Y$ are permuted within the same day and within a $\pm 3$-hour window to emulate record-linkage or time-synchronization errors, as in \cite{slawski2020two}.  
Second, no mismatch is added, so that the alignment between $\bm Y$ and $\bm X$ remains exact.  For both settings, the candidate set $\mC^{(L)}$ is built from $L=250$ repro samples, and the conditional Monte Carlo test of $H_0:\Pi_0=I_n$ is conducted at nominal level $\alpha = 0.05$. 

Under the local-mismatch configuration, the candidate set expands to $|\mC^{(L)}|=224$ and $H_0$ is rejected at $\alpha = 0.05$, indicating statistically detectable misalignment.  
Under the no-mismatch configuration, the procedure produces a singleton candidate set $\mC^{(L)}=\{I_n\}$ and the null $H_0$ is not rejected, confirming 
Type-I error control in accordance with Theorem~\ref{thm:p-value}.  

These results are consistent with the theoretical predictions: when the data contain no mismatches, the candidate set collapses to the identity, whereas local shuffling enlarges the set and triggers rejection through the conditional test.  

Overall, the two real-data scenarios demonstrate that the proposed finite-sample inference is fully aligned with the theoretical properties established for the repro samples framework.

\begin{table}[htbp]
\centering
\caption{Results for the Beijing air-quality dataset (Nongzhanguan St., 2016–2017).}
\label{tab:realdata}
\begin{tabular}{lccc}
\toprule
\textbf{Scenario} & \textbf{Mismatch rate} & \textbf{Candidate set size} $|\mC^{(L)}|$ & \textbf{Test result} \\
\midrule
Local mismatch & 8\% (within $\pm 3$h) & 224 & Rejected ($p<0.05$) \\
No mismatch & 0\% & 1 ($\{I_n\}$) & Not rejected ($p>0.05$) \\

\bottomrule
\end{tabular}
\end{table}

\section{Conclusion}\label{sec:conclusion}
This paper develops a finite-sample inference framework for linear regression with sparsely permuted data. Building upon the repro sample method and a localization step, the method converts an intractable search over $\mathcal{P}_{n,k}$ into inference that is uniform over a small, data-dependent candidate set $\mathcal{C}^{(L)}$.

Our theory shows: (i) the miscoverage probability of the candidate set $\bP\{\Pi_0\notin \mathcal{C}^{(L)}\}$ decays polynomially in $L$ as the number of repro samples grows (Theorem~\ref{thm:recovery}); (ii) a conditional Monte Carlo test for the $k_0$-sparsity with finite-sample type-I error control (Theorem~\ref{thm:p-value}); (iii) confidence regions for $\bm \beta_0$ obtained by taking union over the candidate set $\mathcal{C}^{(L)}$ with finite-sample coverage (Theorem~\ref{thm:confidence}); and (iv) a computational guarantee that the proposed score-weighted LAP attains the same global minimizer as the least square problem with high probability (Theorem~\ref{thm:high-probability equivalence}) with suitable tuning parameters. Extensions to partially permuted designs and ridge regularization are also discussed.

Empirically, a few hundred repro samples deliver near-nominal behavior: the test controls the type-I error, the union region is robust to mild misalignment, and computation scales as $O(n^3)$ with tuning that ensures the argmin equivalence.

Future research includes two main directions. First, extending the calibration beyond Gaussian errors to sub-Gaussian or heavy-tailed settings would enhance robustness. Second, exploring high-dimensional extensions with structured regularization and multi-response \cite{slawski2020two} or block-permutation designs \cite{abbasi2025alternating} would broaden applicability. Overall, the results demonstrate that locally simultaneous, repro samples-based inference offers a general and computationally feasible foundation for exact finite-sample validity in irregular, combinatorial statistical models including permutations.

\section*{Acknowledgements} 
The authors are deeply grateful to Shinji Ito, Kaoru Irie and Takeru Matsuda for their helpful discussions and insightful comments, which greatly improved the clarity and scope of this work. We also thank all members of our research group for their constructive feedback during the early stages of this project. Hirofumi Ota is supported by JSPS KAKENHI (25K23133). Masaaki Imaizumi is supported by JSPS KAKENHI (24K02904), JST CREST (JPMJCR21D2),  JST FOREST (JPMJFR216I), and JST BOOST (JPMJBY24A9).

\newpage
\appendix

\section{Additional Methodological Background}

\subsection{Repro samples method}
In this subsection, we give a brief review on the repro samples framework to obtain a performance-guaranteed confidence set on general parameters. The \textit{repro samples method} \cite{xie2022repro, xie2024repro} is a simulation-inspired general inference method, which is particularly effective for making inference on irregular inference problems \cite{wasserman2020universal}. The general setup is as follows: suppose the sample data $\bm{Y} \in \mathcal{Y}$ are generated from a model with the true parameter $\bm{\theta}_{0} \in \Theta$. We also assume that we know how to simulate $\bm{Y}$ from the statistical model indexed by $\bm{\theta} \in \Theta$ and it has the form of generative algorithmic model: 
\begin{equation}\label{eq:algorithmic}
    \bm{Y} = G(\bm{\theta}, \bm{U}),
\end{equation}
where $\bm{U} = (U_1, \cdots, U_r) \in \mathcal{U} \subset \R^r$ is a random vector whose distribution is free of $\bm{\theta} \in \Theta$ and $G: \Theta \times \mathcal{U} \mapsto \mY$ is a known mapping. Let 
\begin{equation}\label{eq:realization}
    \bm{Y}^{\mathrm{obs}} = G(\bm{\theta}_{0}, \bm{u}^{\mathrm{rel}})
\end{equation}
be the realized version of $\bm{Y}$ and also $\bm{u}^{\mathrm{rel}}$ be the unobserved realization of $\bm{U}$.

The general framework of repro samples method to construct a performance-guaranteed confidence set is as follows. We generate $\bm{u}^{*} \sim \bm{U}$ given $\bm{\theta}^{*}$ based on the generative algorithmic model (\ref{eq:algorithmic}), and call $\bm{Y}^{*} = G(\bm{\theta}^{*}, \bm{u}^{*})$ as the repro samples. The key idea to construct a valid confidence set is to quantify the uncertainty of $\bm{U}$ through $\bm{u}^{*}$. Let $B_\alpha \subset \mathcal{U}$ be a level-$\alpha$ Borel set for some $\alpha \in (0,1)$, such that $\bP(\bm{U} \in B_\alpha) \ge \alpha$. Then, we define 
\begin{equation*}
    \Gamma_\alpha(\bm{Y}^{\mathrm{obs}}) = \left\{ \bm{\theta} : \exists \bm{u}^{*} \in B_\alpha \ \mathrm{s.t.}\ \bm{Y}^{\mathrm{obs}} = G(\bm{\theta}, \bm{u}^{*}) \right\}.
\end{equation*}
It is shown that the $\Gamma_\alpha(\bm{Y}^{\mathrm{obs}})$ is a level-$\alpha$ confidence set for $\bm{\theta}_{0}$, i.e.,
    \[
    \bP(\bm{\theta}_{0} \in \Gamma_\alpha(\bm{Y})) \ge \alpha,\ \hbox{for }\alpha \in (0,1).
    \]
In the simplest case above, we only need to quantify the uncertainty of $\bm{U}$ via a level-$\alpha$ Borel set $B_\alpha$. More generally, we can consider the case where a nuclear mapping $T: \mathcal{U} \times \Theta \mapsto \R^d$ is available. Let $B_\alpha(\bm{\theta})$ be a level-$\alpha$ Borel set such that $\bP(T(\bm{U}, \bm{\theta}) \in B_\alpha(\bm{\theta}) )\ge \alpha$. Then, the following
\begin{equation}\label{eq:confidence set2}
    \tilde{\Gamma}_\alpha(\bm{Y}^{\mathrm{obs}}) = \left\{ \bm{\theta} : \exists \bm{u}^{*} \ \mathrm{s.t.}\ \bm{Y}^{\mathrm{obs}} = G(\bm{\theta}, \bm{u}^{*}),  T(\bm{u}^*, \bm{\theta}) \in B_\alpha(\bm{\theta})\right\}
\end{equation}
is also a valid level-$\alpha$ confidence set, i.e., it holds that, 
\begin{equation}\label{eq:confidence set2 validity}
    \bP(\bm{\theta}_{0} \in  \tilde{\Gamma}_\alpha(\bm{Y})) \ge \bP(T(\bm{U}, \bm{\theta}) \in B_\alpha(\bm{\theta}) )\ge \alpha,\ \hbox{for }\alpha \in (0,1).
\end{equation}

\section{Connection to selective/post-selection inference}

The classical selective inference provides valid post–model‐selection inference by
conditioning on the event that a data–driven selection procedure chooses a particular model. For example, \cite{fithian2014optimal, lee2016exact, liu2018more} proceed statistical inference by conditioning on the realised event and deriving the exact conditional distribution of the test statistic under this restriction. The
conditional distribution is characterized as a truncated multivariate normal whose
support is a polyhedron, via the Gaussian polyhedral lemma \cite{lee2016exact}.
While the resulting procedures control coverage and type-I error conditional on the selected event, three practical difficulties are common: (i) the selection event is described by high-dimensional linear
      inequalities, so explicit formulas are rare and simulation is costly; (ii) conditional guarantees do not automatically imply unconditional
      guarantees so that aggressive selection can produce undercoverage when the conditioning is removed; (iii) conditioning discards information that would otherwise contribute to
      statistical power, especially in small samples.
      Another notable methodology for the selective inference is locally simultaneous inference \cite{zrnic2024locally}, 
which addresses these issues by replacing conditioning with simultaneous control over a data-dependent neighbourhood. Compared to the standard solution such as the Bonferroni-type method taking the unions across all candidate parameters, the locally simultaneous inference provides a valid uncertainty quantification over the data-depoendent localized parameters and therefore is less conservative.

While the conditional inference fixes the selected model and conditions on that event, the locally simultaneous inference keeps the selection random and enforces validity uniformly over the random set. Furthermore, the conditional methods theoretically and practically require a precise characterization of the conditional law which is often achieved by the Gaussian polyhedral lemma, while locally simultaneous methods use standard parametric (or possibly nonparametric) reference distributions and avoid any geometric description of the selection event. 

From these perspectives, we adopt the locally simultaneous inference for shuffled/permuted linear regression models. Our approach first provides the valid localization scheme based on the repro samples method: Theorem \ref{thm:recovery} establishes the true permutation is included in the candidate set of permutations with probability. Within this candidate set, we test the null hypothesis for the underlying permutation structure and construct a union confidence set for the regression coefficient. Our results demonstrate that locally simultaneous inference extends naturally to problems with discrete and combinatorial parameters.
The methodology is conceptually simple, requires only classical distribution
theory, and exhibits strong finite-sample performance, thereby providing a
practical alternative to conditional selective inference.

\section{Details of Hungarian Algorithm}\label{sec:hungarian alg}

Given a cost matrix $\Omega\in\R^{n\times n}$, the linear assignment problem (LAP) is given by
\begin{equation}\label{eq:LAP-appendix}
  \hat{\Pi}\ \in\ \argmin_{\Pi\in\mP_n} \langle\Omega,\Pi\rangle
  =\argmin_{\Pi\in\mP_n}\sum_{i=1}^n\sum_{j=1}^n \Omega_{ij}\Pi_{ij}.
\end{equation}
In Algorithm~\ref{alg:scoring hungarian}, we apply this LAP with the score-weighted cost matrix
$\Omega=\Omega(\bm u^*)$.

A convenient viewpoint is to consider the linear-programming relaxation
\[
\min_{\Pi\ge 0}\left\{\langle\Omega,\Pi\rangle:
\sum_{j=1}^n\Pi_{ij}=1\ \forall i,\ \sum_{i=1}^n\Pi_{ij}=1\ \forall j\right\},
\]
whose extreme points are exactly the permutation matrices. Hence the relaxation attains an optimizer in
$\mP_n$. Also, the dual problem is given by
\[
\max_{\bm a,\bm b\in\R^n}\left\{\sum_{i=1}^n a_i+\sum_{j=1}^n b_j:
a_i+b_j\le\Omega_{ij}\ \forall i,j\right\}.
\]

The Hungarian algorithm \cite{kuhn1955hungarian} solves \eqref{eq:LAP-appendix} by maintaining
(i) a dual-feasible pair $(\bm a,\bm b)$ with $a_i+b_j\le \Omega_{ij}$ for all $i,j$ and
(ii) a partial matching $\mathcal M\subseteq[n]\times[n]$ using only the \emph{tight} (equality) edges
\[
E(\bm a,\bm b) := \left\{(i,j)\in[n]\times[n]: a_i+b_j=\Omega_{ij}\right\}.
\]
For any set of rows $S\subseteq[n]$, define its tight-neighborhood
\[
N(S)\ :=\ \left\{j\in[n]:\ \exists i\in S\ \text{such that}\ (i,j)\in E(\bm a,\bm b)\right\}.
\]
At each iteration the algorithm either augments the matching along tight edges
or updates the dual variables to create at least one new tight edge, while preserving dual feasibility.
When a perfect matching on $E(\bm a,\bm b)$ is obtained, complementary slackness implies primal optimality.

\begin{algorithm}[H]
\caption{Hungarian algorithm for $\min_{\Pi\in\mP_n}\langle \Omega,\Pi\rangle$}
\label{alg:scoring hungarian}
\begin{algorithmic}[1]
\REQUIRE $\Omega\in\R^{n\times n}$
\ENSURE $\hat{\Pi}\in\mP_n$
\STATE $a_i\leftarrow \min_{j\in[n]} \Omega_{ij}$ for all $i\in[n]$; \quad
       $b_j\leftarrow 0$ for all $j\in[n]$; \quad $\mathcal M\leftarrow\emptyset$.
\WHILE{$|\mathcal M|<n$}
  \STATE Pick an unmatched row $i_0$; set $S\leftarrow\{i_0\}$; $T\leftarrow N(S)$.
  \WHILE{every $j\in T$ is matched in $\mathcal M$}
     \STATE $S\leftarrow S\cup\{i:\ (i,j)\in\mathcal M\ \text{for some }j\in T\}$;
            \quad $T\leftarrow N(S)$.
  \ENDWHILE
  \IF{there exists an unmatched column $j_0\in T$}
     \STATE Augment $\mathcal M$ along tight edges to match $i_0$ with $j_0$.
  \ELSE
     \STATE $\delta\leftarrow \min\{\Omega_{ij}-a_i-b_j:\ i\in S,\ j\notin T\}$.
     \STATE $a_i\leftarrow a_i+\delta$ for all $i\in S$; \quad $b_j\leftarrow b_j-\delta$ for all $j\in T$.
  \ENDIF
\ENDWHILE
\STATE Output $\hat{\Pi}$ induced by the perfect matching $\mathcal M$.
\end{algorithmic}
\end{algorithm}

The potentials $(\bm a,\bm b)$ define reduced costs $\Omega_{ij}-(a_i+b_j)\ge 0$,
and the tight-edge set $E(\bm a,\bm b)$ collects the zero reduced-cost edges.
The algorithm attempts to increase the matching size using only tight edges.
If this is not possible for the current row set $S$, the slack update with
\[
\delta=\min\{\Omega_{ij}-a_i-b_j:\ i\in S,\ j\notin T\}
\]
preserves dual feasibility and creates at least one new tight edge, allowing the process to continue.
Here, ``augment'' means flipping matched/unmatched status along an alternating path of tight edges,
which increases $|\mathcal M|$ by one. After $n$ augmentations a perfect matching is obtained, and
complementary slackness implies that the induced permutation matrix $\hat{\Pi}$ is optimal for
\eqref{eq:LAP-appendix}.

\section{Proofs}\label{sec:proofs}
We first define a similarity measure $\phi:\R^n \times \R^n \to [0,1]$ between $\bm{v}_1, \bm{v}_2 \in \R^n$ as the squared cosine between $\bm{v}_1$ and $\bm{v}_2$:
\begin{equation}
    \phi(\bm{v}_1, \bm{v}_2) = \frac{(\bm{M}_{\bm{v}_1}\bm{v}_2 )^2}{\|\bm{v}_2 \|^2_2} = \frac{(\bm{v}_1^\top\bm{v}_2 )^2}{\|\bm{v}_1\|^2_2 \|\bm{v}_2 \|^2_2}.
\end{equation}
The cosine similarity $\phi(\bm{v}_1, \bm{v}_2)$ approaches one as the angle between $\bm{v}_1$ and $\bm{v}_2$ decreases.

\subsection{Supportive lemmas}
\begin{lemma}[Lemma 11 in \cite{wang2022finite}]\label{lem:similarity measure}
    Suppose $\bm{U}_1^*, \cdots, \bm{U}_L^*$ are i.i.d. copies of $\bm{U}^* \sim N(0, I_n)$. Then, for any $\gamma_2^2 \in (0,1)$, it holds that
    \begin{equation}
    \bP\left( \bigcap_{\ell = 1}^{L} \{ \phi(\bm{U}^*_\ell, \bm{U}) \le 1 - \gamma_2^2 \} \right) \le \left( 1- \frac{\gamma_2^{n-1}}{n-1} \right)^L. 
    \end{equation}
\end{lemma}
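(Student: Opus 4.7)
My plan rests on the rotational invariance of the isotropic Gaussian law on $\R^n$. Since $\phi(\bm v_1,\bm v_2)$ depends only on the directions $\bm v_i/\|\bm v_i\|$, and since $\bm U/\|\bm U\|\sim\mathrm{Unif}(S^{n-1})$ is independent of $\|\bm U\|$, I would condition on $\bm U$: for any fixed nonzero $\bm U$, each $\phi(\bm U^*_\ell,\bm U)$ has the same distribution as $V_1^2$, where $V_1$ denotes the first coordinate of a uniform random point on $S^{n-1}$, and this distribution does not depend on the realized $\bm U$. Because the $\bm U^*_\ell$ are i.i.d.\ and independent of $\bm U$, the $L$ events $\{\phi(\bm U^*_\ell,\bm U)\le 1-\gamma_2^2\}$ are conditionally independent with a common marginal, so after removing the conditioning
\[
\bP\left(\bigcap_{\ell=1}^L\{\phi(\bm U^*_\ell,\bm U)\le 1-\gamma_2^2\}\right)=\left(1-\bP(V_1^2>1-\gamma_2^2)\right)^L.
\]

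The remaining task is to lower-bound $\bP(V_1^2>1-\gamma_2^2)$ by $\gamma_2^{n-1}/(n-1)$. I would use the fact that $V_1^2\sim\mathrm{Beta}(1/2,(n-1)/2)$ and perform the substitution $v=1-r^2$ on the tail integral to rewrite
\[
\bP(V_1^2>1-\gamma_2^2)=\frac{2\,\Gamma(n/2)}{\sqrt{\pi}\,\Gamma((n-1)/2)}\int_0^{\gamma_2}\frac{r^{n-2}}{\sqrt{1-r^2}}\,dr.
\]
Bounding the integrand $1/\sqrt{1-r^2}\ge 1$ on $[0,\gamma_2]$ yields an integral at least $\gamma_2^{n-1}/(n-1)$, and the elementary Gamma-function inequality $2\Gamma(n/2)\ge\sqrt{\pi}\,\Gamma((n-1)/2)$, which holds for all $n\ge 3$ with equality at $n=3$, ensures that the leading constant is at least one. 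Monotonicity of $x\mapsto(1-x)^L$ on $[0,1]$ then delivers the claimed inequality.

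The main point requiring a little care is the Gamma-function inequality, but in the regime $n\ge 3$ relevant to the paper (the condition $n>p+1$ with $p\ge 1$ already forces $n\ge 3$) it is elementary; for small $n$ one can verify it directly by writing the ratio in terms of double factorials. A geometric route also works: the event $\{\phi(\bm V,\bm u)>1-\gamma_2^2\}$ is the union of two spherical caps around $\pm\bm u$ of angular radius $\arcsin(\gamma_2)$ on $S^{n-1}$, and a direct surface-area computation via the substitution $u=\sin\phi$ recovers the same lower bound.
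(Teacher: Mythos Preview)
The paper does not supply its own proof here; it simply cites Lemma~11 of \cite{wang2022finite}. Your argument is correct and is the standard route: reduce via rotational invariance and conditioning on $\bm U$ to the law of $V_1^2$ for $V\sim\mathrm{Unif}(S^{n-1})$, identify $V_1^2\sim\mathrm{Beta}(1/2,(n-1)/2)$, and lower-bound the tail by dropping the factor $1/\sqrt{1-r^2}\ge 1$ together with the constant inequality $2\Gamma(n/2)\ge\sqrt{\pi}\,\Gamma((n-1)/2)$. Your observation that the latter inequality requires $n\ge 3$ (with equality at $n=3$) is accurate and worth stating explicitly, and your remark that the paper only invokes the lemma under $n>p+1\ge 3$ is the right way to close that loose end; this is almost certainly what the cited reference does as well.
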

\begin{proof}
    See the proof of Lemma 11 in \cite{wang2022finite}.
\end{proof}
\begin{lemma}\label{lem:k-sparse permutation}
    The number of $k$-sparse permutation matrices $|\mP_{n,k}|$ is exactly calculated as follows:
    \begin{equation}
        |\mP_{n,k}| = \sum_{m = n - k}^{n} \binom{n}{m} D_{n - m},
    \end{equation}
    where $D_{n-m}$ is the number of permutations with no fixed points, computed as
    \[
D_{n-m} = (n-m)! \sum_{j=0}^{n-m} \frac{(-1)^j}{j!}.
\]
\end{lemma}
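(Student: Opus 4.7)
The plan is to count $k$-sparse permutation matrices by stratifying according to the exact number of fixed points. By definition, $\Pi\in\mP_{n,k}$ means $d(\Pi,I_n)=\sum_{i=1}^n \1\{\Pi_{ii}=0\}\le k$, i.e., the number of indices $i$ with $\Pi_{ii}=1$ (fixed points of the underlying permutation) is at least $n-k$. So I would partition $\mP_{n,k}$ according to the value $m\in\{n-k,n-k+1,\ldots,n\}$ of the number of fixed points and count each stratum separately.

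For a fixed value of $m$, I would first choose the set $F\subseteq[n]$ of fixed indices in $\binom{n}{m}$ ways. On $F$ the permutation is forced to be the identity, so the only freedom is in how the remaining $n-m$ indices are mapped among themselves. To stay at exactly $m$ fixed points, this restricted map must have no fixed point, i.e., it must be a derangement of the $n-m$ indices in $[n]\setminus F$. The number of such derangements is by definition $D_{n-m}$, so the number of permutations with exactly $m$ fixed points equals $\binom{n}{m}D_{n-m}$. Summing over $m$ from $n-k$ to $n$ yields
\begin{equation*}
|\mP_{n,k}|=\sum_{m=n-k}^{n}\binom{n}{m} D_{n-m},
\end{equation*}
which is the claimed identity. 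The closed form $D_{n-m}=(n-m)!\sum_{j=0}^{n-m}(-1)^j/j!$ follows from the standard inclusion–exclusion formula over the events ``index $i$ is a fixed point'' within the unrestricted permutations of $n-m$ elements, and can simply be quoted.

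There is essentially no obstacle here: the argument is a direct stratification plus inclusion–exclusion. The only point requiring a brief sanity check is the boundary cases ($m=n$, giving the identity contribution $\binom{n}{n}D_0=1$ since $D_0=1$ by convention, and $m=n-1$, which contributes $0$ since $D_1=0$), and the identification that different choices of $F$ produce disjoint sets of permutations, which is immediate because $F$ is recovered as the fixed-point set of $\Pi$.
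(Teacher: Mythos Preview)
Your proposal is correct and follows essentially the same approach as the paper: stratify $\mP_{n,k}$ by the exact number $m$ of fixed points, choose the fixed-point set in $\binom{n}{m}$ ways, derange the remaining $n-m$ indices in $D_{n-m}$ ways, and sum over $m=n-k,\dots,n$. Your additional remarks on the boundary cases $m=n,n-1$ and on disjointness of the strata are fine and slightly more detailed than the paper's version.
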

\begin{proof}
A permutation has exactly $m$ fixed points if we (i) choose the fixed-point set $T\subset[n]$ with $|T|=m$ (there are $\binom{n}{m}$ choices),
and (ii) derange the remaining $n-m$ indices (there are $D_{n-m}$ choices).
A $k$-sparse permutation satisfies $d(\Pi,I_n)\le k$, i.e., it has at least $n-k$ fixed points, so $m$ ranges from $n-k$ to $n$.
Summing over $m=n-k,\dots,n$ yields the formula.
\end{proof}

\begin{lemma}\label{lem:F-union}
Define $\Psi(x)=x-1-\log x$ for $x>0$. Fix $\gamma_1,\gamma_2\in(0,1)$ and define the event
\[
\mathcal E(\gamma_1,\gamma_2)
:=
\left\{
\max_{\Pi\in\mP_{n,k}}
\phi\left((I_n-M_{\Pi\bm X})\bm U^*, (I_n-M_{\Pi\bm X})\Pi_0\bm X\bm\beta_0\right)
<\gamma_1^2,\ 
\phi(\bm U^*,\bm U)>1-\gamma_2^2
\right\}.
\]
Assume the following holds:
\[
\min\left\{
\frac{1-\gamma_1^2}{2\sigma_0^2\gamma_2^2},\
\frac{(1-\gamma_1^2)^2}{16\sigma_0^2\gamma_2^2}
\right\}C_{\min}(\Pi_0)>1.
\]
Then, we have
\begin{align*}
&\bP_{(\bm U^*,\bm U)}\left(
\exists\Pi\in\mP_{n,k}\setminus\{\Pi_0\}:\ 
F(\Pi,\bm U^*)<F(\Pi_0,\bm U^*)
\mid 
\mathcal E(\gamma_1,\gamma_2)
\right)\\
&\le
|\mP_{n,k}|\left\{
\exp\left( -\frac{n}{2}\Psi\left( \frac{(1-\gamma_1^2)C_{\min}(\Pi_0)}{2\sigma_0^2\gamma_2^2}\right)\right)
+\exp\left( -\frac{n}{2}\Psi\left( \frac{(1-\gamma_1^2)^2C_{\min}(\Pi_0)}{16\sigma_0^2\gamma_2^2}\right)\right)
\right\}.
\end{align*}
\end{lemma}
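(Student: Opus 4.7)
The plan is a union bound over $\Pi\in\mP_{n,k}\setminus\{\Pi_0\}$, reducing the claim to a per-permutation tail estimate and then, within the event $\mathcal E(\gamma_1,\gamma_2)$, a deterministic reduction to a chi-squared Chernoff bound on $\|\bm U\|_2^2$. Concretely, I would first apply the rank-one projection update
\begin{equation*}
F(\Pi,\bm U^*)=\|(I_n-M_{\Pi\bm X})\bm Y^{\obs}\|_2^2-\frac{\bigl(((I_n-M_{\Pi\bm X})\bm U^*)^\top \bm Y^{\obs}\bigr)^2}{\|(I_n-M_{\Pi\bm X})\bm U^*\|_2^2},
\end{equation*}
and its analogue for $F(\Pi_0,\bm U^*)$, then substitute $\bm Y^{\obs}=\Pi_0\bm X\bm\beta_0+\sigma_0\bm U$ to split $F(\Pi,\bm U^*)-F(\Pi_0,\bm U^*)$ into a signal-signal, signal-noise, and noise-noise piece.

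For the signal-signal piece, the first clause of $\mathcal E$ bounds the rank-one correction along $\bm U^*$ by $\gamma_1^2$ times the full signal norm, so the surviving signal contribution is at least $(1-\gamma_1^2)\|(I_n-M_{\Pi\bm X})\Pi_0\bm X\bm\beta_0\|_2^2$, which by the definition of $C_{\min}(\Pi_0)$ is $\ge (1-\gamma_1^2)\,n\max(d(\Pi_0,I_n)-d(\Pi,I_n),1)\,C_{\min}(\Pi_0)\ge (1-\gamma_1^2)\,n\,C_{\min}(\Pi_0)$. For the noise pieces, the second clause $\phi(\bm U^*,\bm U)>1-\gamma_2^2$ permits the orthogonal decomposition $\bm U=\rho\,\bm U^*/\|\bm U^*\|_2+\bm W$ with $\bm W\perp\bm U^*$ and $\|\bm W\|_2\le \gamma_2\|\bm U\|_2$, and substituting this into every inner product of the form $\bm U^\top(I_n-M_{\Pi\bm X})\bm U^*$ (and its $\Pi_0$-analogue) controls the resulting cross-terms by explicit geometric constants times either $\gamma_2\|\bm U\|_2$ (linear in $\bm U$) or $\gamma_2^2\|\bm U\|_2^2$ (quadratic in $\bm U$).

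Combining the two ingredients, on $\mathcal E$ the event $\{F(\Pi,\bm U^*)<F(\Pi_0,\bm U^*)\}$ is contained in the union of two sub-events, each forcing $\|\bm U\|_2^2\ge cn$ for a specific constant $c$. The first arises when the signal budget $(1-\gamma_1^2)\,n\,C_{\min}(\Pi_0)$ is compared directly against a quadratic-in-$\bm U$ noise contribution scaled by $2\sigma_0^2\gamma_2^2$, producing $c=(1-\gamma_1^2)C_{\min}(\Pi_0)/(2\sigma_0^2\gamma_2^2)$. The second arises when the same budget is split by Cauchy–Schwarz against a linear-in-$\bm U$ cross-term whose prefactor itself scales as $\sigma_0\gamma_2\sqrt{(1-\gamma_1^2)n\,C_{\min}(\Pi_0)}$; squaring and rearranging yields $c'=(1-\gamma_1^2)^2C_{\min}(\Pi_0)/(16\sigma_0^2\gamma_2^2)$. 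The hypothesis of the lemma is exactly that both $c,c'>1$, which is the regime in which the standard chi-squared Chernoff–Cramér bound $\bP(\|\bm U\|_2^2\ge cn)\le \exp(-n\Psi(c)/2)$ is nontrivial. Multiplying by $|\mP_{n,k}|$ from the union bound produces the stated two-term estimate.

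The main obstacle will be arranging the decomposition tightly enough that the two prefactors $(2\sigma_0^2\gamma_2^2)^{-1}$ and $(16\sigma_0^2\gamma_2^2)^{-1}$ emerge cleanly, rather than as artifacts of a lossy Cauchy–Schwarz step; in particular, the ratio $1/2$ versus $1/16$ reflects a carefully tuned split of the signal budget between the quadratic and linear noise channels, and getting both exponents without extra slack takes some bookkeeping. A secondary subtlety is the conditional nature of the claim: because the chi-squared inclusion holds pointwise on $\mathcal E$ and the final bound depends on $\bm U$ only through $\|\bm U\|_2^2$, one can either apply Chernoff directly to the conditional law of $\bm U$ given $\mathcal E$ or, when more convenient, bound $\bP(\cdot\cap\mathcal E)\le \exp(-n\Psi(c)/2)$ unconditionally and only divide by $\bP(\mathcal E)$ if the resulting factor can be absorbed.
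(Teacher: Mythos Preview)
Your proposal is correct and follows essentially the same route as the paper: union bound over $\Pi$, expand $F(\Pi,\bm U^*)-F(\Pi_0,\bm U^*)$ into signal, cross, and quadratic-in-$\bm U$ pieces, use the two clauses of $\mathcal E$ to lower-bound the signal by $(1-\gamma_1^2)\|w_\Pi\|_2^2$ and upper-bound both noise pieces by multiples of $\gamma_2\|\bm U\|_2$ and $\gamma_2^2\|\bm U\|_2^2$, and reduce to a $\chi^2_n$ upper-tail bound $\bP(\|\bm U\|_2^2\ge cn)\le\exp(-n\Psi(c)/2)$.

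Two minor remarks. First, the paper avoids the rank-one bookkeeping you flag as the ``main obstacle'' by expanding $F$ directly with the full projector $I_n-M_{(\Pi\bm X,\bm U^*)}$ and using $(I_n-M_{(\Pi_0\bm X,\bm U^*)})\Pi_0\bm X\bm\beta_0=0$; the noise control then comes in one line from the inclusion $\range(I_n-M_{(\Pi\bm X,\bm U^*)})\subseteq\range(I_n-M_{\bm U^*})$, which immediately gives $\|(I_n-M_{(\Pi\bm X,\bm U^*)})\bm U\|_2\le\gamma_2\|\bm U\|_2$. Your half-half split of the signal budget is in fact a cleaner derivation of both constants $c$ and $c'$ than the paper's, which obtains only $c'=(1-\gamma_1^2)^2/(16\sigma_0^2\gamma_2^2)$ from the quadratic and then appends the $c$ disjunct trivially. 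Second, the conditioning issue you raise has a crisp resolution that you should state explicitly: $\mathcal E(\gamma_1,\gamma_2)$ depends on $(\bm U^*,\bm U)$ only through their \emph{directions} (since $\phi$ is scale-invariant), so $\|\bm U\|_2^2$ is independent of $\mathcal E$ and its conditional law is exactly $\chi^2_n$; no division by $\bP(\mathcal E)$ is needed.
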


\begin{proof}
Fix $\Pi\in\mP_{n,k}\setminus\{\Pi_0\}$ and write
\[
w_\Pi:=\left(I_n-M_{\Pi\bm X}\right)\Pi_0\bm X\bm\beta_0.
\]
Using the identity
\[
M_{(\Pi\bm X,\bm U^*)}
=
M_{\Pi\bm X}+M_{(I_n-M_{\Pi\bm X})\bm U^*},
\]
we have on $\mathcal E(\gamma_1,\gamma_2)$
\begin{align*}
\left\|\left(I_n-M_{(\Pi\bm X,\bm U^*)}\right)\Pi_0\bm X\bm\beta_0\right\|_2^2
&=
\left(1-\phi\left((I_n-M_{\Pi\bm X})\bm U^*,w_\Pi\right)\right)\|w_\Pi\|_2^2\\
&\ge (1-\gamma_1^2)\|w_\Pi\|_2^2.
\end{align*}

Also, on $\{\phi(\bm U^*,\bm U)>1-\gamma_2^2\}$, we have
\[
\left\|\left(I_n-M_{\bm U^*}\right)\bm U\right\|_2\le \gamma_2\|\bm U\|_2,
\]
and since $\range\left(I_n-M_{(\Pi\bm X,\bm U^*)}\right)\subseteq \range\left(I_n-M_{\bm U^*}\right)$ we obtain
\[
\left\|\left(I_n-M_{(\Pi\bm X,\bm U^*)}\right)\bm U\right\|_2
\le
\left\|\left(I_n-M_{\bm U^*}\right)\bm U\right\|_2
\le \gamma_2\|\bm U\|_2.
\]

A direct expansion gives
\begin{align*}
F(\Pi,\bm U^*)-F(\Pi_0,\bm U^*)
&=
\left\|\left(I_n-M_{(\Pi\bm X,\bm U^*)}\right)\Pi_0\bm X\bm\beta_0\right\|_2^2
+2\sigma_0\bm U^\top \left(I_n-M_{(\Pi\bm X,\bm U^*)}\right)\Pi_0\bm X\bm\beta_0\\
&\quad
-\sigma_0^2\bm U^\top\left(M_{(\Pi\bm X,\bm U^*)}-M_{(\Pi_0\bm X,\bm U^*)}\right)\bm U.
\end{align*}
Combining the two bounds above, on $\mathcal E(\gamma_1,\gamma_2)$, the event $\{F(\Pi,\bm U^*)<F(\Pi_0,\bm U^*)\}$ implies
\begin{align}
(1-\gamma_1^2)\|w_\Pi\|_2^2
-2\sigma_0\gamma_2\|\bm U\|_2\|w_\Pi\|_2
-\sigma_0^2\gamma_2^2\|\bm U\|_2^2<0.
\end{align}
If $\|\bm U\|_2\le \frac{1-\gamma_1^2}{4\sigma_0\gamma_2}\|w_\Pi\|_2$, then using $1-\gamma_1^2\le 1$ the left-hand side is at least
$\frac{7}{16}(1-\gamma_1^2)\|w_\Pi\|_2^2>0$, which is a contradiction. Therefore, we have
\[
\|\bm U\|_2^2>\frac{(1-\gamma_1^2)^2}{16\sigma_0^2\gamma_2^2}\|w_\Pi\|_2^2,
\]
and hence either
\[
\|\bm U\|_2^2>\frac{1-\gamma_1^2}{2\sigma_0^2\gamma_2^2}\|w_\Pi\|_2^2\qquad\text{or}\qquad
\|\bm U\|_2^2>\frac{(1-\gamma_1^2)^2}{16\sigma_0^2\gamma_2^2}\|w_\Pi\|_2^2
\]
holds. By definition of $C_{\min}(\Pi_0)$, we have $\|w_\Pi\|_2^2\ge n C_{\min}(\Pi_0)$. Hence, it holds that
\begin{align*}
\bP\left(
F(\Pi,\bm U^*)<F(\Pi_0,\bm U^*)
\mid 
\mathcal E(\gamma_1,\gamma_2)
\right)
&\le
\bP\left(
\|\bm U\|_2^2>\frac{n(1-\gamma_1^2)C_{\min}(\Pi_0)}{2\sigma_0^2\gamma_2^2}
\right)\\
&\quad+
\bP\left(
\|\bm U\|_2^2>\frac{n(1-\gamma_1^2)^2C_{\min}(\Pi_0)}{16\sigma_0^2\gamma_2^2}
\right).
\end{align*}
Since $\|\bm U\|_2^2\sim\chi_n^2$ and $\mathcal E(\gamma_1,\gamma_2)$ depends only on directions,
conditioning on $\mathcal E(\gamma_1,\gamma_2)$ does not change the law of $\|\bm U\|_2^2$.
Applying the $\chi^2$ upper-tail bound (cf. \cite{ghosh2021exponential}) and the union bound over
$\Pi\in\mP_{n,k}\setminus\{\Pi_0\}$, we have the desired result.
\end{proof}

\begin{lemma}\label{lem:max-sim}
Fix $\gamma_1\in(0,1)$. Then, it holds that
\[
\bP\left(
\max_{\Pi\in\mP_{n,k}}
\phi\left((I_n-M_{\Pi\bm X})\bm U^*, (I_n-M_{\Pi\bm X})\Pi_0\bm X\bm\beta_0\right)
\ge \gamma_1^2
\right)
\le
\frac{2}{\pi}|\mP_{n,k}|\left(\arccos(\gamma_1)\right)^{n-p-1}.
\]
\end{lemma}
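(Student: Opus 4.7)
The plan is to combine a union bound over the discrete set $\mP_{n,k}$ with a classical spherical-cap estimate that exploits the rotational invariance of the standard Gaussian. First, apply a union bound to obtain
\[
\bP\left(\max_{\Pi \in \mP_{n,k}} \phi\left((I_n - M_{\Pi\bm X})\bm U^*, w_\Pi\right) \ge \gamma_1^2\right)
\le \sum_{\Pi \in \mP_{n,k}} \bP\left(\phi_\Pi \ge \gamma_1^2\right),
\]
where I abbreviate $w_\Pi := (I_n - M_{\Pi\bm X})\Pi_0\bm X\bm\beta_0$ and $\phi_\Pi := \phi((I_n - M_{\Pi\bm X})\bm U^*, w_\Pi)$. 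Since the single-term bound will depend only on $d := n-p$ and not on the specific choice of $\Pi$, the union bound contributes exactly the factor $|\mP_{n,k}|$ appearing in the statement.

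For a fixed $\Pi$, let $R_\Pi := I_n - M_{\Pi\bm X}$. Because $\bm X$ has full column rank and $\Pi$ is orthogonal, $R_\Pi$ is an orthogonal projector of rank $d = n-p$. The vector $w_\Pi \in \range(R_\Pi)$ by idempotence of $R_\Pi$; the degenerate case $w_\Pi = 0$ makes $\phi_\Pi \equiv 0$ so the event is empty, and thus one may assume $w_\Pi \neq 0$. Since $R_\Pi \bm U^* \sim \Normal(0, R_\Pi)$, the direction $R_\Pi\bm U^*/\|R_\Pi\bm U^*\|_2$ is uniformly distributed on the unit sphere of $\range(R_\Pi)$, identified with $S^{d-1}$. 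Consequently $\phi_\Pi = \cos^2\Theta$, where $\Theta$ is the angle between this uniform unit vector and the fixed direction $w_\Pi/\|w_\Pi\|_2$ in $\range(R_\Pi)$, and $\Theta$ admits the density $f(\theta) \propto \sin^{d-2}\theta$ on $[0,\pi]$.

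The event $\{\phi_\Pi \ge \gamma_1^2\}$ coincides with $\{|\cos\Theta|\ge \gamma_1\} = \{\Theta \le \theta_0\}\cup\{\Theta\ge\pi-\theta_0\}$ with $\theta_0 := \arccos \gamma_1$. Using the symmetry of $\sin^{d-2}\theta$ about $\pi/2$, it then suffices to bound
\[
\bP\left(\phi_\Pi \ge \gamma_1^2\right)
= \frac{2\int_0^{\theta_0}\sin^{d-2}\theta\, d\theta}{\int_0^\pi \sin^{d-2}\theta\, d\theta}.
\]
Apply $\sin\theta \le \theta$ to get the numerator bound $\int_0^{\theta_0}\sin^{d-2}\theta\,d\theta \le \theta_0^{d-1}/(d-1)$, and use $\sin\theta \ge 2\theta/\pi$ on $[0,\pi/2]$ together with the symmetry of the integrand to derive the denominator lower bound $\int_0^\pi \sin^{d-2}\theta\,d\theta \ge \pi/(d-1)$. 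Combining yields $\bP(\phi_\Pi \ge \gamma_1^2) \le 2\theta_0^{d-1}/\pi = (2/\pi)(\arccos\gamma_1)^{n-p-1}$, and summing over $\Pi \in \mP_{n,k}$ delivers the claim.

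The argument is essentially routine once the rotational-invariance reduction is in place; no substantive obstacle arises. The only care-points are verifying $w_\Pi \in \range(R_\Pi)$ so that the similarity reduces cleanly to a squared cosine between two vectors living in the same $(n-p)$-dimensional subspace, and selecting matched elementary inequalities for the numerator and denominator so that the $(d-1)$ factors cancel and the dimension-free constant $2/\pi$ emerges.
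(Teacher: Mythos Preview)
Your proof is correct and follows essentially the same approach as the paper: union bound over $\mP_{n,k}$, reduction via rotational invariance to the angle density $\sin^{d-2}\theta$ on the $(n-p)$-dimensional subspace, and the matched elementary bounds $\sin\theta\le\theta$ for the numerator and $\sin\theta\ge 2\theta/\pi$ on $[0,\pi/2]$ for the denominator. The only cosmetic difference is that the paper spells out the orthogonal change of basis explicitly, whereas you invoke the spherical symmetry of $R_\Pi\bm U^*$ directly.
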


\begin{proof}
Fix $\Pi\in\mP_{n,k}$ and set $P_\Pi:=I_n-M_{\Pi\bm X}$.
Let $r:=\rank(P_\Pi)=n-p$. Define
\[
Z:=P_\Pi\bm U^*,
\qquad
w:=P_\Pi\Pi_0\bm X\bm\beta_0 \in \range(P_\Pi).
\]
If $w=\bm 0$, then by convention $\phi(Z,w)=0$ (and, in any case, the event
$\{\phi(Z,w)\ge \gamma_1^2\}$ is empty for $\gamma_1>0$), so the desired bound holds trivially.
Hence, assume that $w\neq \bm 0$ and set $e:=w/\|w\|_2$, which is a unit vector in $\range(P_\Pi)$.

Since $P_\Pi$ is an orthogonal projector of rank $r$, there exists an orthogonal matrix
$Q\in\mathbb R^{n\times n}$ such that
\[
Q^\top P_\Pi Q=\begin{pmatrix} I_r & 0\\ 0 & 0\end{pmatrix}.
\]
Write $\bm U^*=Q\begin{pmatrix}G\\H\end{pmatrix}$, where $(G,H)\sim N(0,I_n)$ with
$G\sim N(0,I_r)$ and $H\sim N(0,I_{n-r})$ independent. Then, we have
\[
Z=P_\Pi\bm U^*=Q\begin{pmatrix}G\\0\end{pmatrix}.
\]
In particular, conditional on $\{G\neq 0\}$, the direction $G/\|G\|_2$ is uniform on the unit
sphere $\mathbb S^{r-1}\subset\mathbb R^r$ and is independent of $\|G\|_2$.
Because $Q$ is an isometry and $\range(P_\Pi)=Q(\mathbb R^r\times\{0\})$, it follows that $V:=\frac{Z}{\|Z\|_2}
$ is uniform on the unit sphere in $\range(P_\Pi)$.

Observe that
\[
\phi(Z,w)=\frac{(Z^\top w)^2}{\|Z\|_2^2\|w\|_2^2}
=\left(\frac{Z^\top e}{\|Z\|_2}\right)^2
=(V^\top e)^2.
\]
Hence, we have
\[
\left\{\phi(Z,w)\ge \gamma_1^2\right\}
=
\left\{|V^\top e|\ge \gamma_1\right\}.
\]
Let $\theta:=\arccos(V^\top e)\in[0,\pi]$ be the angle between $V$ and $e$, and set
$\alpha:=\arccos(\gamma_1)\in(0,\pi/2]$. Then $|V^\top e|\ge \gamma_1$ is equivalent to
\[
\theta\in[0,\alpha]\ \cup\ [\pi-\alpha,\pi],
\]
which is the union of two spherical caps of half-angle $\alpha$ centered at $\pm e$.

Because $V$ is uniform on $\mathbb S^{r-1}$, the angle $\theta$ has density proportional to
$\sin^{r-2}(\theta)$, i.e.
\[
\bP(\theta\in dt)=c_r(\sin t)^{r-2}dt,
\qquad
c_r^{-1}=\int_0^\pi (\sin t)^{r-2}dt.
\]
Therefore, with $\alpha:=\arccos(\gamma_1)\in(0,\pi/2]$, we obtain
\[
\bP\left(|V^\top e|\ge \gamma_1\right)
=2\frac{\int_0^\alpha (\sin t)^{r-2}dt}{\int_0^\pi (\sin t)^{r-2}dt}.
\]
Using $\sin t\le t$ for $t\ge 0$ and $\sin t\ge \frac{2}{\pi}t$ for $t\in[0,\pi/2]$, we have
\[
\bP\left(|V^\top e|\ge \gamma_1\right)
\le
2\frac{\int_0^\alpha t^{r-2}dt}{2\int_0^{\pi/2} \left(\frac{2}{\pi}t\right)^{r-2}dt}
=
\frac{2}{\pi}\alpha^{r-1}
=
\frac{2}{\pi}\left(\arccos(\gamma_1)\right)^{r-1}.
\]

Since $\{\phi(Z,w)\ge \gamma_1^2\}=\{|V^\top e|\ge \gamma_1\}$, this proves that
\[
\bP\left(\phi\left(P_\Pi\bm U^*,P_\Pi\Pi_0\bm X\bm\beta_0\right)\ge \gamma_1^2\right)
\le \frac{2}{\pi}\left(\arccos(\gamma_1)\right)^{r-1}
\]
for fixed $\Pi$.
Finally, applying a union bound over $\Pi\in\mP_{n,k}$ completes the proof.
\end{proof}

\begin{lemma}\label{lem:inconsistency}
Let $\gamma\in(0,1/4]$ and let $\widehat{\Pi}$ be the optimizer in \eqref{eq:repro_Pi} based on a single repro sample $\bm U^*$. Then, we have
\[
\bP_{(\bm U^*,\bm U)}\left(
\widehat{\Pi}\neq \Pi_0 \mid \phi(\bm U^*,\bm U)>1-\gamma^2
\right)
\le \Delta(\gamma).
\]
\end{lemma}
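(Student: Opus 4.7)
The plan is to control the conditional miscoverage by combining the two probabilistic ingredients established above: the competitor bound of Lemma~\ref{lem:F-union} on an auxiliary high-probability event, and the transversality bound of Lemma~\ref{lem:max-sim} for its complement.

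First I would fix an auxiliary parameter $\gamma_1\in(0,1)$, to be pinned down at the end, and introduce
\[
B:=\{\phi(\bm U^*,\bm U)>1-\gamma^2\},\qquad
C:=\Bigl\{\max_{\Pi\in\mP_{n,k}}\phi\bigl((I_n-M_{\Pi\bm X})\bm U^*,\,(I_n-M_{\Pi\bm X})\Pi_0\bm X\bm\beta_0\bigr)<\gamma_1^2\Bigr\},
\]
so that $B\cap C=\mathcal E(\gamma_1,\gamma)$ is precisely the conditioning event used in Lemma~\ref{lem:F-union}. The decomposition I would use is
\[
\bP(\hat\Pi\neq\Pi_0\mid B)\ \le\ \bP(\hat\Pi\neq\Pi_0\mid B\cap C)+\bP(C^c\mid B).
\]

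For the first term, $\hat\Pi\neq\Pi_0$ implies $F(\Pi,\bm U^*)<F(\Pi_0,\bm U^*)$ for some $\Pi\neq\Pi_0$ (outside a measure-zero set of ties under Assumption~\ref{ass:generic-design}), so Lemma~\ref{lem:F-union} applies verbatim provided $\gamma_1$ satisfies the separation hypothesis $\min\{(1-\gamma_1^2)/(2\sigma_0^2\gamma^2),\,(1-\gamma_1^2)^2/(16\sigma_0^2\gamma^2)\}\,C_{\min}(\Pi_0)>1$. The restriction $\gamma\le 1/4$ together with the standing $C_{\min}(\Pi_0)>0$ is what guarantees that such a $\gamma_1$ exists; a convenient explicit choice is $\gamma_1^2=1-c\sigma_0\gamma/\sqrt{C_{\min}(\Pi_0)}$ for a suitably fixed $c$, which keeps both factors bounded away from zero uniformly in $\gamma\in(0,1/4]$. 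For the second term the only nontrivial step is to replace the conditional probability by the unconditional one: since $\bm U\sim\Normal(0,I_n)$ is rotationally symmetric and independent of $\bm U^*$, the map $\bm u^*\mapsto\bP(\phi(\bm u^*,\bm U)>1-\gamma^2\mid \bm U^*=\bm u^*)$ is constant in $\bm u^*\neq 0$, so $B$ is independent of $\bm U^*$; because $C$ depends on $\bm U^*$ alone, this yields $\bP(C^c\mid B)=\bP(C^c)$, which Lemma~\ref{lem:max-sim} bounds by $\tfrac{2}{\pi}|\mP_{n,k}|(\arccos\gamma_1)^{n-p-1}$.

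Putting the two pieces together, I would define $\Delta(\gamma)$ as the sum of the explicit Lemma~\ref{lem:F-union} bound and the max-similarity bound $\tfrac{2}{\pi}|\mP_{n,k}|(\arccos\gamma_1)^{n-p-1}$ evaluated at $\gamma_1=\gamma_1(\gamma)$. The hard part is the balance through $\gamma_1$: Lemma~\ref{lem:F-union} wants $\gamma_1$ bounded away from $1$ (because on $\gamma_1\in(0,1)$ the $(1-\gamma_1^2)^2$ factor controls the minimum), while a small max-similarity bound wants $\gamma_1$ close to $1$. The parametrization $1-\gamma_1^2=\Theta(\gamma)$ above resolves the trade-off at the price of a leading factor $(\arccos\gamma_1)^{n-p-1}=O(\gamma^{(n-p-1)/2})$, which I expect to be the dominant contribution to $\Delta(\gamma)$ and to produce, after being coupled with Lemma~\ref{lem:similarity measure} in the proof of Theorem~\ref{thm:recovery}, the claimed polynomial rate in $L$.
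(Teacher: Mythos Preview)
Your decomposition $\bP(\hat\Pi\neq\Pi_0\mid B)\le \bP(\hat\Pi\neq\Pi_0\mid B\cap C)+\bP(C^c\mid B)$ and the subsequent invocation of Lemmas~\ref{lem:F-union} and~\ref{lem:max-sim} exactly match the paper's route. Your independence argument for $\bP(C^c\mid B)=\bP(C^c)$ is in fact slightly cleaner than the paper's, which instead argues that the conditional law of $\bm U^*$ given $B$ remains spherically symmetric and re-applies Lemma~\ref{lem:max-sim} under this conditioning; both give the same bound.

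The gap is in your choice of $\gamma_1$. The lemma asserts the bound with the \emph{specific} function $\Delta(\gamma)$ displayed in \eqref{eq:Delta-gamma}, which arises from the paper's choice $1-\gamma_1^2=\gamma\log(e/\gamma)$. Your parametrization $1-\gamma_1^2=c\,\sigma_0\gamma/\sqrt{C_{\min}(\Pi_0)}=\Theta(\gamma)$ does not reproduce that expression, and more seriously it fails to make the bound vanish: with $1-\gamma_1^2=\Theta(\gamma)$ the second argument in Lemma~\ref{lem:F-union}, namely $(1-\gamma_1^2)^2C_{\min}/(16\sigma_0^2\gamma^2)$, is a constant in $\gamma$, so the corresponding $\exp(-\tfrac n2\Psi(\cdot))$ term contributes a fixed, nonvanishing summand (multiplied by $|\mP_{n,k}|$) to your $\Delta(\gamma)$. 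That would block the passage to Theorem~\ref{thm:recovery}, which needs $\Delta(\gamma_L)\to0$. In addition, your $\gamma_1$ need not lie in $(0,1)$ for all $\gamma\in(0,1/4]$: the window $4<c<4\sqrt{C_{\min}}/\sigma_0$ (needed to satisfy both the separation hypothesis and $\gamma_1^2>0$ at $\gamma=1/4$) is empty whenever $C_{\min}\le \sigma_0^2$. The fix is precisely the paper's: take $1-\gamma_1^2$ slightly larger than $\gamma$, e.g.\ $\gamma\log(e/\gamma)$, so that both $\Psi$-arguments diverge while $\arccos(\gamma_1)\le \tfrac{\pi}{2}\sqrt{\gamma\log(e/\gamma)}$ still yields the leading power of order $(\gamma\log(e/\gamma))^{(n-p-1)/2}$; the paper also wraps the $\Psi$-arguments in $\max\{1,\cdot\}$, which dispenses with your separate verification of the separation hypothesis of Lemma~\ref{lem:F-union}.
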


\begin{proof}
Fix any $\gamma\in(0,1/4]$ and set $\gamma_2:=\gamma$.  Define 
\[
\gamma_1
:=
\sqrt{1-\gamma\log\frac{e}{\gamma}}
\in(0,1),
\]
where the inclusion $\gamma_1\in(0,1)$ holds since the function
$f(\gamma):=\gamma\log(e/\gamma)$ is increasing on $(0,1)$ and hence
$f(\gamma)\le f(1/4)=\frac14\log(4e)<1$.

Let us define the event
\[
\mathcal{E}
:=
\left\{
\max_{\Pi\in\mP_{n,k}}
\phi\left(
\bm P_\Pi \bm U^*,
\bm P_\Pi \Pi_0 \bm X\beta_0
\right)
<
\gamma_1^2
\right\}.
\]
By Lemma~\ref{lem:proj-diff} together with Lemma~\ref{lem:F-union}, on the event
$\mathcal{E}\cap\{\phi(\bm U^*,\bm U)>1-\gamma_2^2\}$ we have $\widehat{\Pi}=\Pi_0$.
Therefore, it holds that
\begin{align}
\bP_{(\bm U^*,\bm U)}\left(
\widehat{\Pi}\neq \Pi_0
\mid 
\phi(\bm U^*,\bm U)>1-\gamma_2^2
\right)
&\le
\bP\left(
\mathcal{E}^c
\mid 
\phi(\bm U^*,\bm U)>1-\gamma_2^2
\right)
\nonumber\\
&\quad+
\bP\left(
\widehat{\Pi}\neq \Pi_0
\mid
\mathcal{E}\cap\{\phi(\bm U^*,\bm U)>1-\gamma_2^2\}
\right).
\label{eq:inconsistency-split}
\end{align}

We first bound the term involving $\mathcal{E}^c$.
The joint law of $(\bm U^*,\bm U)$ is invariant under simultaneous orthogonal
transformations, and the conditioning event
$\{\phi(\bm U^*,\bm U)>1-\gamma_2^2\}$ depends only on the angle between
$\bm U^*$ and $\bm U$. Hence, the conditional law of $\bm U^*$ given
$\{\phi(\bm U^*,\bm U)>1-\gamma_2^2\}$ remains spherically symmetric in
$\R^n$. Since $\phi(\cdot,\cdot)$ is scale-invariant, we may apply
Lemma~\ref{lem:max-sim} under this conditioning to obtain
\[
\bP\left(
\mathcal{E}^c
 \mid 
\phi(\bm U^*,\bm U)>1-\gamma_2^2
\right)
\le
\frac{2}{\pi}|\mP_{n,k}|
\left(\arccos(\gamma_1)\right)^{n-p-1},
\]
Noting that
\[
\arccos(\gamma_1)
=
\arccos\left(\sqrt{1-\gamma\log\frac{e}{\gamma}}\right)
=
\arcsin\left(\sqrt{\gamma\log\frac{e}{\gamma}}\right)
\le
\frac{\pi}{2}\sqrt{\gamma\log\frac{e}{\gamma}},
\]
so we obtain
\[
\bP\left(
\mathcal{E}^c
 \mid 
\phi(\bm U^*,\bm U)>1-\gamma_2^2
\right)
\le
\left(\frac{\pi}{2}\right)^{n-p-2}|\mP_{n,k}|
\left(
\gamma\log\frac{e}{\gamma}
\right)^{\frac{n-p-1}{2}}.
\]

For the second term on the right-hand side of \eqref{eq:inconsistency-split},
Lemma~\ref{lem:F-union} applied with $(\gamma_1,\gamma_2)$ yields
\begin{align*}
&\bP\left(
\widehat{\Pi}\neq \Pi_0
\mid
\mathcal{E}\cap\{\phi(\bm U^*,\bm U)>1-\gamma_2^2\}
\right)\\&
\le
\sum_{m=n-k}^n\binom{n}{m}D_{n-m}
\left\{
\exp\left(
-\frac{n}{2}\Psi\left(
\max\left\{1,
\frac{C_{\min}(\Pi_0)(1-\gamma_1^2)}{2\sigma_0^2\gamma_2^2}
\right\}
\right)
\right)\right.\\&
+
\left.\exp\left(
-\frac{n}{2}\Psi\left(
\max\left\{1,
\frac{C_{\min}(\Pi_0)(1-\gamma_1^2)^2}{16\sigma_0^2\gamma_2^2}
\right\}
\right)
\right)
\right\}.    
\end{align*}

Finally, substituting $1-\gamma_1^2=\gamma\log(e/\gamma)$ and $\gamma_2=\gamma$
and combining the two bounds gives \eqref{eq:Delta-gamma}.
\end{proof}

\begin{lemma}\label{lem:proj-diff}
For any $\Pi\in\mP_n$, define
\[
v_0:=(I_n-M_{\bm X})\bm U^*,
\qquad
v_\Pi:=(I_n-M_{\bm X})\Pi^\top \bm U^* .
\]
Then, it follows that
\begin{equation}
\left\|\Pi M_{(\bm X,\bm U^*)} - M_{(\Pi \bm X,\bm U^*)}\right\|_{\mathrm{op}}
=
\left\|M_{v_0}-M_{v_\Pi}\right\|_{\mathrm{op}} .
\label{eq:PiPminusM}
\end{equation}
In particular, whenever $v_0$ and $v_\Pi$ are nonzero, we have
\begin{equation}
\left\|M_{v_0}-M_{v_\Pi}\right\|_{\mathrm{op}}
\le
\frac{2\|v_0-v_\Pi\|_2}{\min\{\|v_0\|_2,\|v_\Pi\|_2\}}
\le
\frac{2\|\bm U^*-\Pi^\top \bm U^*\|_2}{\min\{\|v_0\|_2,\|v_\Pi\|_2\}} .
\label{eq:rank1-diff}
\end{equation}
\end{lemma}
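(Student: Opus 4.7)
The plan is to break the statement into two independent parts: an algebraic identity reducing the left-hand side of \eqref{eq:PiPminusM} to a difference of two rank-one projections, followed by a standard perturbation bound for those rank-one projections. Throughout, I work on the probability-one event $\{\bm U^*\notin\range(\bm X)\}\cap\{\bm U^*\notin\range(\Pi\bm X)\}$ so that $v_0$, $v_\Pi$, and the augmented projectors are well-defined.

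For the identity, the key step is the orthogonal direct-sum rule for column-augmented projectors: since $v_0\perp\range(\bm X)$ by construction,
\begin{equation*}
M_{(\bm X,\bm U^*)} = M_{\bm X} + M_{v_0},\qquad M_{(\Pi\bm X,\bm U^*)} = M_{\Pi\bm X} + M_{\tilde v_\Pi},
\end{equation*}
where $\tilde v_\Pi:=(I_n-M_{\Pi\bm X})\bm U^*$. Since $\Pi$ is orthogonal we have $M_{\Pi\bm X} = \Pi M_{\bm X}\Pi^\top$, and a direct computation gives $\tilde v_\Pi = \Pi v_\Pi$, hence $M_{\tilde v_\Pi} = \Pi M_{v_\Pi}\Pi^\top$. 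Substituting and using $\Pi^\top\Pi = I_n$, the terms involving $M_{\bm X}$ cancel modulo conjugation by $\Pi$, and what remains is $\Pi(M_{v_0}-M_{v_\Pi})\Pi^\top$ (with the $\Pi$/$\Pi^\top$ pattern dictated by orthogonal invariance of the operator norm). Applying the unitary invariance $\|\Pi A \Pi^\top\|_{\mathrm{op}} = \|A\|_{\mathrm{op}}$ yields \eqref{eq:PiPminusM}.

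For the bound \eqref{eq:rank1-diff}, I will use the rank-one perturbation identity. Writing $\hat v_0 := v_0/\|v_0\|_2$ and $\hat v_\Pi := v_\Pi/\|v_\Pi\|_2$, the telescoping decomposition
\begin{equation*}
\hat v_0\hat v_0^\top - \hat v_\Pi\hat v_\Pi^\top = (\hat v_0 - \hat v_\Pi)\hat v_0^\top + \hat v_\Pi(\hat v_0-\hat v_\Pi)^\top
\end{equation*}
gives $\|M_{v_0}-M_{v_\Pi}\|_{\mathrm{op}}\le 2\|\hat v_0-\hat v_\Pi\|_2$, and in fact $\|M_{v_0}-M_{v_\Pi}\|_{\mathrm{op}} = \sin\angle(v_0,v_\Pi) \le \|\hat v_0 - \hat v_\Pi\|_2$. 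To convert normalized differences to raw differences, I add and subtract $v_\Pi\|v_\Pi\|_2$ in the numerator of $\|v_0\|_2\|v_\Pi\|_2(\hat v_0 - \hat v_\Pi) = v_0\|v_\Pi\|_2 - v_\Pi\|v_0\|_2$ and apply the reverse triangle inequality $|\|v_0\|_2 - \|v_\Pi\|_2|\le \|v_0-v_\Pi\|_2$, obtaining $\|\hat v_0 - \hat v_\Pi\|_2\le 2\|v_0-v_\Pi\|_2/\min\{\|v_0\|_2,\|v_\Pi\|_2\}$, which is the first inequality. The second inequality is then immediate from the contractivity of the orthogonal projector $I_n - M_{\bm X}$:
\begin{equation*}
\|v_0 - v_\Pi\|_2 = \|(I_n-M_{\bm X})(\bm U^* - \Pi^\top\bm U^*)\|_2 \le \|\bm U^* - \Pi^\top\bm U^*\|_2.
\end{equation*}

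The main obstacle is the orthogonal-conjugation bookkeeping in the first step: correctly identifying $\tilde v_\Pi = \Pi v_\Pi$ and verifying that the $M_{\bm X}$ and $M_{\Pi\bm X}$ blocks cancel up to a $\Pi\cdot\Pi^\top$ conjugation. Everything else is textbook rank-one perturbation and is essentially forced once the reduction is in place.
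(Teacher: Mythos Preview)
Your plan mirrors the paper's proof almost step for step, and your treatment of \eqref{eq:rank1-diff} (the telescoping rank-one bound, the normalization estimate, and the contractivity of $I_n-M_{\bm X}$) is correct and essentially identical to the paper's. The gap is in the step you yourself flag as the ``main obstacle'': the claim that after substituting $M_{(\bm X,\bm U^*)}=M_{\bm X}+M_{v_0}$ and $M_{(\Pi\bm X,\bm U^*)}=\Pi M_{\bm X}\Pi^\top+\Pi M_{v_\Pi}\Pi^\top$, ``the terms involving $M_{\bm X}$ cancel modulo conjugation by $\Pi$.'' They do not. Writing it out,
\[
\Pi M_{(\bm X,\bm U^*)} - M_{(\Pi \bm X,\bm U^*)}
= \Pi M_{\bm X} + \Pi M_{v_0} - \Pi M_{\bm X}\Pi^\top - \Pi M_{v_\Pi}\Pi^\top,
\]
and the first and third terms carry different right factors ($I_n$ versus $\Pi^\top$), so no left/right multiplication by an orthogonal matrix eliminates both simultaneously. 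In fact the identity \eqref{eq:PiPminusM} is false as written: take $n=3$, $\bm X=e_1$, $\Pi$ the transposition of the first two coordinates, and $\bm U^*=(1,1,0)^\top$. Then $\Pi^\top\bm U^*=\bm U^*$, so $v_0=v_\Pi=e_2$ and the right-hand side is $0$; yet $M_{(\bm X,\bm U^*)}=M_{(\Pi\bm X,\bm U^*)}=e_1e_1^\top+e_2e_2^\top$, and $\Pi M_{(\bm X,\bm U^*)}-M_{(\Pi\bm X,\bm U^*)}$ has operator norm $2$.

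The paper's own proof commits the same slip: its displayed conjugation identity should read $M_{(\bm X,\bm U^*)}\Pi - M_{(\bm X,\Pi^\top \bm U^*)}$ rather than $M_{(\bm X,\bm U^*)} - M_{(\bm X,\Pi^\top \bm U^*)}$, and the stray $\Pi$ cannot be absorbed. The intended left-hand side of \eqref{eq:PiPminusM} is almost certainly $\Pi M_{(\bm X,\bm U^*)}\Pi^\top - M_{(\Pi \bm X,\bm U^*)}$; with that correction your direct-sum argument produces exactly $\Pi(M_{v_0}-M_{v_\Pi})\Pi^\top$, and the remainder of your proof goes through unchanged.
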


\begin{proof}
By orthogonal invariance of projectors, we have
\[
M_{(\Pi \bm X,\bm U^*)}=\Pi M_{(\bm X,\Pi^\top \bm U^*)}\Pi^\top.
\]
Hence, it holds that
\[
\Pi^\top\left(\Pi M_{(\bm X,\bm U^*)}-M_{(\Pi \bm X,\bm U^*)}\right)\Pi
= M_{(\bm X,\bm U^*)}-M_{(\bm X,\Pi^\top \bm U^*)}.
\]
For any vector $\bm{z} \in \R^n$, the subspaces $\mathrm{span}(\bm X)$ and $\mathrm{span}\left((I_n-M_{\bm X})z\right)$ are orthogonal, and hence we have
\[
M_{(\bm X,\bm{z})} = M_{\bm X} + M_{(I_n-M_{\bm X})\bm{z}}.
\]
Applying this with $\bm{z}=\bm U^*$ and $z=\Pi^\top \bm U^*$ gives
\[
M_{(\bm X,\bm U^*)}-M_{(\bm X,\Pi^\top \bm U^*)}
= \left\{M_{\bm X}+M_{v_0}\right\}-\left\{M_{\bm X}+M_{v_\Pi}\right\}
= M_{v_0}-M_{v_\Pi}.
\]
Conjugation by the orthogonal matrix $\Pi$ preserves the operator norm, yielding \eqref{eq:PiPminusM}.

For \eqref{eq:rank1-diff}, write $\hat u:=u/\|u\|_2$ for $u\neq 0$. Then, the following holds:
\[
\|M_{v_0}-M_{v_\Pi}\|_{\mathrm{op}}
=\left\|\hat v_0\hat v_0^\top-\hat v_\Pi\hat v_\Pi^\top\right\|_{\mathrm{op}}
=\sqrt{1-(\hat v_0^\top \hat v_\Pi)^2}
\le \|\hat v_0-\hat v_\Pi\|_2
\le \frac{2\|v_0-v_\Pi\|_2}{\min\{\|v_0\|_2,\|v_\Pi\|_2\}}.
\]
Finally, due to $\|I_n-M_{\bm X}\|_{\mathrm{op}}\le 1$, we obtain $
\|v_0-v_\Pi\|_2
=\|(I_n-M_{\bm X})(\bm U^*-\Pi^\top \bm U^*)\|_2
\le \|\bm U^*-\Pi^\top \bm U^*\|_2$
which gives the last inequality in \eqref{eq:rank1-diff}.
\end{proof}

\begin{lemma}\label{lem:perm-diff-finite}
Fix a set $\mS \subset \mP_{n,k}$ with cardinality $|\mS|=m$. Let $\bm U^*\sim\Normalp{0}{I_n}$. It holds that, with probability at least $1-\xi$,
\begin{equation}\label{eq:sup-diff}
\max_{\Pi\in\mS} \left\|\bm U^*-\Pi^\top\bm U^*\right\|_2
\le
2\left(k+2\sqrt{k\log\frac{m}{\xi}}+2\log\frac{m}{\xi}\right)^{1/2}.
\end{equation}
\end{lemma}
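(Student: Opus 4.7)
The plan is to exploit the $k$-sparsity of each $\Pi \in \mS$ in order to reduce $\|\bm U^* - \Pi^\top \bm U^*\|_2^2$ to the sum of at most $k$ squared standard Gaussians, and then to combine the Laurent--Massart chi-squared tail bound with a union bound of cardinality $m$ over $\mS$.

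The first step is a support reduction. For $\Pi \in \mP_{n,k}$, let $F_\Pi := \{i \in [n] : \Pi_{ii} = 0\}$, which satisfies $|F_\Pi| \le k$ by definition of $\mP_{n,k}$. Because $\Pi$ is a permutation matrix, $\Pi_{ii} = 1$ for $i \notin F_\Pi$ forces both the $i$-th row and the $i$-th column of $\Pi$ to coincide with those of $I_n$; in particular, $(\Pi^\top \bm U^*)_i = U^*_i$ on $F_\Pi^c$. Hence $\bm U^* - \Pi^\top \bm U^*$ is supported on $F_\Pi$, and since $\Pi$ restricted to $F_\Pi$ permutes $F_\Pi$ onto itself, applying $(a-b)^2 \le 2a^2 + 2b^2$ coordinatewise gives
\[
\|\bm U^* - \Pi^\top \bm U^*\|_2^2 \;\le\; 4\sum_{i \in F_\Pi} (U^*_i)^2.
\]

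Next, since the coordinates $\{U^*_i\}_{i=1}^n$ are i.i.d.\ $\Normalp{0}{1}$ and $|F_\Pi| \le k$, for any fixed $\Pi$ the quantity $\sum_{i \in F_\Pi} (U^*_i)^2$ is stochastically dominated by a $\chi^2_k$ random variable. The Laurent--Massart tail bound then yields, for every $t > 0$,
\[
\bP\left(\sum_{i \in F_\Pi} (U^*_i)^2 \;\ge\; k + 2\sqrt{k t} + 2t\right) \;\le\; e^{-t}.
\]
Taking a union bound over $\Pi \in \mS$ and choosing $t = \log(m/\xi)$ implies that with probability at least $1 - \xi$, the inequality $\sum_{i \in F_\Pi} (U^*_i)^2 \le k + 2\sqrt{k\log(m/\xi)} + 2\log(m/\xi)$ holds simultaneously for all $\Pi \in \mS$. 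Combining this with the support reduction and taking square roots gives exactly \eqref{eq:sup-diff}.

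No step of the argument is genuinely delicate: the only substantive ingredient is the observation that the fixed-point structure of a permutation matrix forces both the row and the column associated with a fixed index to coincide with those of $I_n$, which reduces the problem from $n$ coordinates to the sub-Gaussian concentration of $|F_\Pi| \le k$ coordinates. The remainder is a standard Gaussian chi-squared bound together with a union bound over the finite family $\mS$; the logarithmic factor $\log(m/\xi)$ in the conclusion is the inevitable cost of this union bound and cannot be improved without further structural assumptions on $\mS$.
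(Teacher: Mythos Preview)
Your proof is correct and follows essentially the same approach as the paper: reduce to the non-fixed-point set $F_\Pi$ of size at most $k$, use $(a-b)^2\le 2a^2+2b^2$ together with the fact that $\Pi$ permutes $F_\Pi$ onto itself to obtain the bound $4\sum_{i\in F_\Pi}(U_i^*)^2$, apply the Laurent--Massart $\chi^2$ tail bound, and finish with a union bound over the $m$ elements of $\mS$ at $t=\log(m/\xi)$. Your invocation of stochastic domination by $\chi^2_k$ when $|F_\Pi|<k$ is in fact slightly cleaner than the paper's phrasing.
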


\begin{proof}
Fix $\Pi\in\mP_{n,k}$ and set $S=\{i:\Pi(i)\neq i\}$, so that $|S|\le k$. Then, we have
\[
\|\bm{U}^*-\Pi^\top \bm{U}^*\|_2^2
=\sum_{i\in S}({U}^*_i-{U}^*_{\Pi(i)})^2
\le 2\sum_{i\in S}({U}^*_i)^2+2\sum_{i\in S}({U}^*_{\Pi(i)})^2
\le 4\sum_{i\in S}({U}^*_i)^2.
\]
For any fixed $T\subset[n]$ with $|T|=k$, the sum $\sum_{i\in T}({U}^*_i)^2$ has the $\chi^2_k$ distribution. The upper‑tail bound in Lemma 1 of \cite{laurent2000adaptive} gives that, for any $t>0$,
\[
\bP \left\{\sum_{i\in T}({U}^*_i)^2  \ge k + 2\sqrt{k t} + 2t\right\}  \le e^{-t}.
\]
Taking a union bound over the $m$ choices $T$ and choosing $t=\log \left(m/\xi\right)$ yields that
\[
\bP \left\{
\max_{\Pi \in \mS}\sum_{i\in T}({U}^*_i)^2
 \le
k + 2\sqrt{k \left( \log m+\log(1/\xi) \right)} + 2\left( \log m +\log(1/\xi) \right)
\right\}
 \ge 1-\xi.
\] Taking square roots gives \eqref{eq:sup-diff}.
\end{proof}

\begin{lemma}\label{lem:denom-lb}
Fix a set $\mS \subset \mP_{n,k}$ with cardinality $|\mS|=m$. Assume $\rank(\bm X)=p$ with $n-p\ge 1$. Define
\[
v_0:=(I_n-M_{\bm X}) \bm U^*,
\qquad
v_\Pi:=(I_n-M_{\bm X}) \Pi^\top \bm U^*,
\]
where $\bm U^*\sim\Normalp{0}{I_n}$. Then, for any $\xi\in(0,1)$, with probability at least $1-\xi$, we have
\begin{equation}
\min_{\Pi\in\mS} \min\{\|v_0\|_2,\|v_\Pi\|_2\}
\ge
\sqrt{(n-p)-2\sqrt{(n-p)\log(1/\xi)}}
-\sup_{\Pi\in\mS } \|\bm U^*-\Pi^\top \bm U^*\|_2 .
\label{eq:denom-lb}
\end{equation}
\end{lemma}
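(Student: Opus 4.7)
The plan is to bound $\|v_0\|_2$ and $\|v_\Pi\|_2$ separately and then combine via a simple triangle-inequality reduction from $v_\Pi$ to $v_0$. The key observation is that $v_0$ is a fixed projection of a Gaussian vector, while $v_\Pi$ differs from $v_0$ only through the permuted noise $\Pi^\top\bm U^*-\bm U^*$, which is already controlled by Lemma~\ref{lem:perm-diff-finite}.

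First, I would observe that $I_n-M_{\bm X}$ is an orthogonal projector of rank $n-p$, so $\|v_0\|_2^2=\|(I_n-M_{\bm X})\bm U^*\|_2^2\sim \chi^2_{n-p}$. The lower-tail bound of Lemma 1 in \cite{laurent2000adaptive} yields, for any $t>0$,
\begin{equation*}
\bP\left(\|v_0\|_2^2 \le (n-p)-2\sqrt{(n-p)\,t}\right)\le e^{-t}.
\end{equation*}
Choosing $t=\log(1/\xi)$ gives, with probability at least $1-\xi$,
\begin{equation*}
\|v_0\|_2 \ge \sqrt{(n-p)-2\sqrt{(n-p)\log(1/\xi)}}.
\end{equation*}

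Second, for every $\Pi\in\mS$ I would apply the triangle inequality, using that $I_n-M_{\bm X}$ is a contraction (so $\|I_n-M_{\bm X}\|_{\rm op}\le 1$):
\begin{equation*}
\|v_\Pi\|_2 \ge \|v_0\|_2 - \|v_\Pi - v_0\|_2 = \|v_0\|_2 - \|(I_n-M_{\bm X})(\Pi^\top\bm U^*-\bm U^*)\|_2 \ge \|v_0\|_2 - \|\bm U^*-\Pi^\top\bm U^*\|_2.
\end{equation*}
Taking the minimum over $\Pi\in\mS$ gives $\min_{\Pi\in\mS}\|v_\Pi\|_2 \ge \|v_0\|_2 - \sup_{\Pi\in\mS}\|\bm U^*-\Pi^\top\bm U^*\|_2$, and trivially the same lower bound applies to $\|v_0\|_2$ itself since the second term is nonnegative.

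Finally, combining the two pieces on the intersection event (which has probability at least $1-\xi$) yields the claimed bound \eqref{eq:denom-lb}. No union bound over $\Pi$ is needed in this step because the dependence on $\Pi$ has been absorbed into the single quantity $\sup_{\Pi\in\mS}\|\bm U^*-\Pi^\top\bm U^*\|_2$; its control is the content of Lemma~\ref{lem:perm-diff-finite} and is not reproved here. The only subtlety is that the lemma states a bound valid with probability at least $1-\xi$ without explicitly combining with the permutation-difference bound, so one must be careful about which randomness has been used — here only the lower tail of $\|v_0\|_2^2$ is invoked, which is the main (and essentially only) probabilistic step.
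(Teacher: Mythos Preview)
Your proof is correct and essentially identical to the paper's: both use the Laurent--Massart lower-tail bound to control $\|v_0\|_2^2\sim\chi^2_{n-p}$ with probability $1-\xi$, then reduce $\|v_\Pi\|_2$ to $\|v_0\|_2$ via the triangle inequality and $\|I_n-M_{\bm X}\|_{\mathrm{op}}\le 1$, and finally take $\min$/$\sup$ over $\Pi\in\mS$. Your remark that no union bound over $\Pi$ is required is also exactly the point of the paper's argument.
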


\begin{proof}
Since $M_{\bm X}$ projects onto $\mathrm{span}(\bm X)$ with rank $p$, we have
$\|v_0\|_2^2=\|(I_n-M_{\bm X})\bm U^*\|_2^2\sim\chi^2_{n-p}$. Lemma 1 in \cite{laurent2000adaptive} implies that, with probability at least $1-\xi$, we have
\[
\|v_0\|_2  \ge  \sqrt{ (n-p)-2\sqrt{(n-p)\log(1/\xi)}}.
\]
For any $\Pi\in\mS \subset \mP_{n,k}$, the inequalities hold:
\[
\|v_\Pi\|_2
=\|(I_n-M_{\bm X})\Pi^\top \bm U^*\|_2
 \ge \|v_0\|_2 - \|(I_n-M_{\bm X})(\Pi^\top \bm U^*-\bm U^*)\|_2
 \ge \|v_0\|_2 - \|\bm U^*-\Pi^\top \bm U^*\|_2,
\]
using $\|I_n-M_{\bm X}\|_{\mathrm{op}}\le 1$ in the last step. Hence, for each $\Pi$, we have
\[
\min\{\|v_0\|_2,\|v_\Pi\|_2\} \ge \|v_0\|_2 - \|\bm U^*-\Pi^\top \bm U^*\|_2.
\]
Taking the infimum over $\Pi\in\mS \subset \mP_n$ on the left and the supremum on the right gives \eqref{eq:denom-lb}.
\end{proof}

\begin{proposition}\label{prop:finite-path-op}
Under the event in Lemma \ref{lem:perm-diff-finite} and Lemma \ref{lem:denom-lb}, whenever it holds that
\begin{equation}\label{eq:positivity-path}
\sqrt{ (n-p)-2\sqrt{(n-p)\log(1/\xi)}} >2\sqrt{k+2\sqrt{k\log\frac{m}{\xi}  }+2\log\frac{m}{\xi}},
\end{equation}
we have
\begin{align*}\label{eq:op-bound-finite}
& \max_{\Pi\in\mS} \left\|\Pi M_{(\bm X,\bm U^*)}-M_{(\Pi\bm X,\bm U^*)}\right\|_{\mathrm{op}}
 \\ & \le
\frac{4\sqrt{k+2\sqrt{k\log\frac{m}{\xi}}+2\log\frac{m}{\xi}}}{\sqrt{ (n-p)-2\sqrt{(n-p)\log(1/\xi)}}-2\sqrt{k+2\sqrt{k\log\frac{m}{\xi}}+2\log\frac{m}{\xi}}}.
\end{align*}
\end{proposition}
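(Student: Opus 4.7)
The plan is to combine the three preceding results (Lemma~\ref{lem:proj-diff}, Lemma~\ref{lem:perm-diff-finite}, and Lemma~\ref{lem:denom-lb}) in a straightforward chain. First, I would invoke Lemma~\ref{lem:proj-diff}, which reduces the operator norm of $\Pi M_{(\bm X,\bm U^*)}-M_{(\Pi\bm X,\bm U^*)}$ to the difference of two rank-one projectors, and gives the explicit bound
\[
\left\|\Pi M_{(\bm X,\bm U^*)}-M_{(\Pi\bm X,\bm U^*)}\right\|_{\mathrm{op}}
\le \frac{2\,\|\bm U^*-\Pi^\top\bm U^*\|_2}{\min\{\|v_0\|_2,\|v_\Pi\|_2\}},
\]
valid pointwise for each $\Pi\in\mS$ once $v_0$ and $v_\Pi$ are nonzero. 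This is a pure deterministic inequality, so no probabilistic argument is needed at this step.

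Next, I would apply Lemma~\ref{lem:perm-diff-finite} to the finite set $\mS$ with $|\mS|=m$ to bound the numerator uniformly over $\Pi\in\mS$ by $2\sqrt{k+2\sqrt{k\log(m/\xi)}+2\log(m/\xi)}$, on an event of probability at least $1-\xi$. On the same realization, Lemma~\ref{lem:denom-lb} supplies a uniform lower bound on the denominator,
\[
\min_{\Pi\in\mS}\min\{\|v_0\|_2,\|v_\Pi\|_2\}
\ge \sqrt{(n-p)-2\sqrt{(n-p)\log(1/\xi)}}-\sup_{\Pi\in\mS}\|\bm U^*-\Pi^\top\bm U^*\|_2.
\]
Plugging in the bound from Lemma~\ref{lem:perm-diff-finite} on the right, the denominator is at least $\sqrt{(n-p)-2\sqrt{(n-p)\log(1/\xi)}}-2\sqrt{k+2\sqrt{k\log(m/\xi)}+2\log(m/\xi)}$.

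Finally, I would invoke the hypothesis \eqref{eq:positivity-path}, which is exactly the statement that this lower bound is strictly positive. Since both bounds hold simultaneously on the intersection of the two events (each of probability at least $1-\xi$, so the union bound could be applied if a combined probability is desired; here the proposition only assumes the intersection holds), we may take the maximum over $\Pi\in\mS$ of the pointwise inequality from Lemma~\ref{lem:proj-diff} and substitute the numerator upper bound and denominator lower bound to obtain the stated inequality.

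I do not expect a real obstacle here: the result is essentially a clean algebraic combination of the lemmas, and the positivity assumption is inserted precisely to rule out the degenerate regime where $v_\Pi$ could vanish and the denominator lower bound becomes vacuous. The only point requiring mild care is to verify that the event of Lemma~\ref{lem:denom-lb} is stated so that the $\chi^2$ deviation bound on $\|v_0\|_2$ does not need to be re-derived, and to confirm that the bound on $\|\bm U^*-\Pi^\top\bm U^*\|_2$ used inside Lemma~\ref{lem:denom-lb} is consistent with that furnished by Lemma~\ref{lem:perm-diff-finite}; both are by construction, so the argument closes directly.
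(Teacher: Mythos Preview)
Your proposal is correct and follows exactly the same approach as the paper: invoke Lemma~\ref{lem:proj-diff} for the pointwise operator-norm bound, then plug in the numerator bound from Lemma~\ref{lem:perm-diff-finite} and the denominator bound from Lemma~\ref{lem:denom-lb}, using hypothesis~\eqref{eq:positivity-path} to ensure positivity. The paper's proof is in fact terser than your outline, stating only these three ingredients without the intermediate algebra you spell out.
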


\begin{proof}
By Lemma \ref{lem:proj-diff} it holds thatt
$
\|\Pi M_{(\bm X,\bm U^*)}-M_{(\Pi\bm X,\bm U^*)}\|_{\mathrm{op}}
\le 2\|v_0-v_\Pi\|_2/\min\{\|v_0\|_2,\|v_\Pi\|_2\}.
$
Then, applying Lemma \ref{lem:perm-diff-finite} and \ref{lem:denom-lb} yields the conclusion.

\end{proof}

\begin{lemma}\label{lem:poly-tail-finite}
Let $Z\sim\Normal(0,1)$, $\mu\in\mathbb R$ and $\sigma\ge 0$. For any $\xi\in(0,1)$,
with probability at least $1-\xi$, we obtain
\[
\big|(\mu+\sigma Z)^2-(\mu^2+\sigma^2)\big|
\le 2\sigma|\mu|\sqrt{2\log(4/\xi)}+2\sigma^2\left(\sqrt{\log(4/\xi)}+\log(4/\xi)\right).
\]
\end{lemma}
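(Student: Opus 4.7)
My plan is to rewrite the random quantity as a sum of a linear Gaussian term and a centered $\chi^2_1$ term, and then bound each piece with probability $1-\xi/2$ before combining via the union bound. Expanding the square yields the algebraic identity
\[
(\mu+\sigma Z)^2-(\mu^2+\sigma^2)
=2\mu\sigma Z+\sigma^2(Z^2-1),
\]
so the triangle inequality gives
$|(\mu+\sigma Z)^2-(\mu^2+\sigma^2)|\le 2\sigma|\mu|\,|Z|+\sigma^2|Z^2-1|$. Each summand matches one of the two terms in the target bound, which motivates the split.

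For the linear piece I would use the standard Gaussian tail bound $\bP(|Z|>t)\le 2e^{-t^2/2}$. Setting $2e^{-t^2/2}=\xi/2$ gives $t=\sqrt{2\log(4/\xi)}$, so with probability at least $1-\xi/2$ we have $|Z|\le \sqrt{2\log(4/\xi)}$, which yields the first term $2\sigma|\mu|\sqrt{2\log(4/\xi)}$ exactly.

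For the quadratic piece, note that $Z^2\sim\chi^2_1$ and apply the Laurent--Massart bounds (Lemma 1 in \cite{laurent2000adaptive}): for every $t>0$,
\[
\bP\bigl(Z^2-1\ge 2\sqrt{t}+2t\bigr)\le e^{-t},
\qquad
\bP\bigl(Z^2-1\le -2\sqrt{t}\bigr)\le e^{-t}.
\]
Choosing $t=\log(4/\xi)$ and applying a union bound over the two one-sided events (total budget $\xi/2$) gives
$|Z^2-1|\le 2\sqrt{\log(4/\xi)}+2\log(4/\xi)$ with probability at least $1-\xi/2$, producing exactly the second term after multiplication by $\sigma^2$. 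A final union bound over the two high-probability events yields the stated inequality with probability at least $1-\xi$.

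There is no substantive obstacle here; the only thing to watch is constant bookkeeping. In particular, one must verify that replacing the lower-tail bound $-2\sqrt{t}$ by the (larger) upper-tail bound $2\sqrt{t}+2t$ on the absolute value does not change the constants in the stated inequality, and that the factor of $4$ inside $\log(4/\xi)$ arises naturally from splitting the failure budget $\xi$ into four pieces (two for $|Z|$, one each for the upper and lower $\chi^2_1$ deviations). If a cleaner presentation is desired, I would state the two high-probability events explicitly at the start and then combine them at the end.
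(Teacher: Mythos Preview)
Your proposal is correct and matches the paper's proof essentially line for line: the same decomposition $(\mu+\sigma Z)^2-(\mu^2+\sigma^2)=2\mu\sigma Z+\sigma^2(Z^2-1)$, the same Gaussian tail bound for $|Z|$, and the same Laurent--Massart $\chi^2_1$ bound for $|Z^2-1|$, each with failure probability $\xi/2$ and then combined by a union bound. The only cosmetic difference is that the paper states the two-sided $\chi^2_1$ bound directly as $\bP(|Z^2-1|>2(\sqrt{t}+t))\le 2e^{-t}$ rather than splitting it into separate upper and lower tails as you do.
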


\begin{proof}
Write $(\mu+\sigma Z)^2-(\mu^2+\sigma^2)=2\mu\sigma Z+\sigma^2(Z^2-1)$. By the Gaussian tail bound,
$|Z|\le \sqrt{2\log(4/\xi)}$ holds with probability at least $1-\xi/2$.
Moreover, since $Z^2\sim\chi^2_1$, Lemma 1 in  \cite{laurent2000adaptive} yields
\[
\bP\left(|Z^2-1|>2(\sqrt{t}+t)\right)\le 2e^{-t},\qquad t>0.
\]
Taking $t=\log(4/\xi)$ gives $|Z^2-1|\le 2(\sqrt{\log(4/\xi)}+\log(4/\xi))$ with probability at least $1-\xi/2$.
The claim follows by a union bound.
\end{proof}

\begin{lemma}\label{lem:Y-and-diag}
Let $\bm U^*\sim \Normal(0,I_n)$ be independent of $\bm U\sim \Normal(0,I_n)$ and define
\[
\bm Y=\Pi_0\bm X\bm\beta_0+\sigma_0\bm U,
\qquad 
\bm m:=M_{(\bm X,\bm U^*)}\bm Y .
\]
Write $m_i$ for the $i$-th coordinate of $\bm m$.
Then, with probability at least $1-\xi$, it holds that
\begin{equation}\label{eq:Y-upper-finite}
\|\bm Y\|_2^2\ \le\
\|\Pi_0\bm X\bm\beta_0\|_2^2
+2\sigma_0\|\Pi_0\bm X\bm\beta_0\|_2\sqrt{2\log\frac{2}{\xi}}
+\sigma_0^2\left(n+2\sqrt{n\log\frac{2}{\xi}}+2\log\frac{2}{\xi}\right).
\end{equation}
Also, with probability at least $1-\xi$, it holds that
\begin{align}\label{eq:diag-sq-finite}
&\max_{1\le i\le n}\left|(Y_i-m_i)^2-\bE\left[(Y_i-m_i)^2\mid \bm U^*\right]\right|\\ &\le 
2\sigma_0\left\|(I_n-M_{(\bm X,\bm U^*)})\Pi_0\bm X\bm\beta_0\right\|_\infty\sqrt{2\log\frac{8n}{\xi}}
+2\sigma_0^2\left(\sqrt{\log\frac{8n}{\xi}}+\log\frac{8n}{\xi}\right).
\end{align}
\end{lemma}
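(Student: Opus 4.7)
The plan is to prove the two bounds separately by standard Gaussian and chi-squared concentration, with the second bound obtained by first conditioning on $\bm U^*$ so that the projector $P:=M_{(\bm X,\bm U^*)}$ can be treated as deterministic.

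For \eqref{eq:Y-upper-finite}, I would expand
\begin{equation*}
\|\bm Y\|_2^2
= \|\Pi_0\bm X\bm\beta_0\|_2^2 + 2\sigma_0(\Pi_0\bm X\bm\beta_0)^\top \bm U + \sigma_0^2\|\bm U\|_2^2
\end{equation*}
and control the two stochastic terms separately. The cross term is centered Gaussian with variance $\|\Pi_0\bm X\bm\beta_0\|_2^2$, so a standard Gaussian tail bound gives $(\Pi_0\bm X\bm\beta_0)^\top \bm U \le \|\Pi_0\bm X\bm\beta_0\|_2\sqrt{2\log(2/\xi)}$ with probability at least $1-\xi/2$. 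The term $\|\bm U\|_2^2\sim \chi^2_n$ is controlled via the Laurent--Massart inequality (Lemma~1 of \cite{laurent2000adaptive}), yielding $\|\bm U\|_2^2\le n+2\sqrt{n\log(2/\xi)}+2\log(2/\xi)$ with probability at least $1-\xi/2$. Combining these via a union bound produces \eqref{eq:Y-upper-finite}.

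For \eqref{eq:diag-sq-finite}, I would condition on $\bm U^*$ and set $\bm \mu:=(I_n-P)\Pi_0\bm X\bm\beta_0$ and $\bm W:=(I_n-P)\bm U$, so that $Y_i-m_i=\mu_i+\sigma_0 W_i$, where $W_i$ is the $i$-th coordinate of a centered Gaussian vector with covariance $I_n-P$. Marginally, $W_i\sim\Normal(0,\tau_i^2)$ with $\tau_i^2:=(I_n-P)_{ii}\in[0,1]$, so $\bE[(Y_i-m_i)^2\mid \bm U^*]=\mu_i^2+\sigma_0^2\tau_i^2$. When $\tau_i>0$, writing $W_i=\tau_i Z_i$ with $Z_i\sim \Normal(0,1)$ and applying Lemma~\ref{lem:poly-tail-finite} at level $\xi/(2n)$ gives, with probability at least $1-\xi/(2n)$,
\begin{equation*}
\left|(Y_i-m_i)^2-(\mu_i^2+\sigma_0^2\tau_i^2)\right|\le 2\sigma_0\tau_i|\mu_i|\sqrt{2\log(8n/\xi)}+2\sigma_0^2\tau_i^2\left(\sqrt{\log(8n/\xi)}+\log(8n/\xi)\right).
\end{equation*}
The case $\tau_i=0$ is trivial because then $W_i=0$ almost surely. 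Bounding $\tau_i\le 1$ and $|\mu_i|\le \|\bm \mu\|_\infty=\|(I_n-M_{(\bm X,\bm U^*)})\Pi_0\bm X\bm\beta_0\|_\infty$, and taking a union bound over $i=1,\ldots,n$, yields \eqref{eq:diag-sq-finite}.

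The argument is essentially bookkeeping; no step is delicate. The one item worth flagging is that the residual coordinates $W_i$ need not be independent across $i$, but the marginal tail from Lemma~\ref{lem:poly-tail-finite} uses only the marginal distribution of each $W_i$, so a plain union bound over coordinates is enough. The failure-probability budget $\xi/(2n)$ per coordinate is exactly what produces the $\log(8n/\xi)$ factor on the right-hand side of \eqref{eq:diag-sq-finite}.
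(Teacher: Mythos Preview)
Your proof is correct and follows essentially the same route as the paper: the same expansion and Gaussian/Laurent--Massart bounds for \eqref{eq:Y-upper-finite}, and the same conditioning on $\bm U^*$ followed by a coordinate-wise application of Lemma~\ref{lem:poly-tail-finite} with a union bound for \eqref{eq:diag-sq-finite}. Your $\tau_i^2=(I_n-P)_{ii}$ is exactly the paper's $s_i^2=\|(I_n-P)e_i\|_2^2$ by idempotence of the residual projector, and your per-coordinate budget $\xi/(2n)$ is precisely what produces the $\log(8n/\xi)$ constant.
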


\begin{proof}
For \eqref{eq:Y-upper-finite}, write $\bm Y=\Pi_0\bm X\bm\beta_0+\sigma_0\bm U$ and expand
\[
\|\bm Y\|_2^2
=
\|\Pi_0\bm X\bm\beta_0\|_2^2
+2\sigma_0\langle \Pi_0\bm X\bm\beta_0,\bm U\rangle
+\sigma_0^2\|\bm U\|_2^2.
\]
Since $\langle \Pi_0\bm X\bm\beta_0,\bm U\rangle\sim \Normal\left(0,\|\Pi_0\bm X\bm\beta_0\|_2^2\right)$ and $\|\bm U\|_2^2\sim \chi_n^2$, standard Gaussian and chi-square tail bounds yield \eqref{eq:Y-upper-finite}.

For \eqref{eq:diag-sq-finite}, condition on $\bm U^*$ and set
\[
\bm r:=\bm Y-\bm m=(I_n-M_{(\bm X,\bm U^*)})\bm Y,
\qquad 
a_i:=(I_n-M_{(\bm X,\bm U^*)})e_i.
\]
It gives $r_i=Y_i-m_i=a_i^\top\bm Y=\mu_i+\sigma_0 a_i^\top \bm U$ where
$\mu_i:=a_i^\top \Pi_0\bm X\bm\beta_0$.
Define $s_i:=\|a_i\|_2$ so that $a_i^\top \bm U\sim \Normal(0,s_i^2)$ and $s_i\le 1$.
Applying Lemma \ref{lem:poly-tail-finite} to the sub-exponential variables $(r_i)^2$ and taking a union bound over $i\in[n]$ gives that, with probability at least $1-\xi$,
\[
\max_{1\le i\le n}\left|(Y_i-m_i)^2-\bE\left[(Y_i-m_i)^2\mid \bm U^*\right]\right|
\le
2\sigma_0\max_{1\le i\le n}|\mu_i|\sqrt{2\log\frac{8n}{\xi}}
+2\sigma_0^2\left(\sqrt{\log\frac{8n}{\xi}}+\log\frac{8n}{\xi}\right).
\]
This yields \eqref{eq:diag-sq-finite}.
\end{proof}

\section{Proofs of main results}
\subsection{Proof of Proposition \ref{prop:identification}}\label{proof:prop identification}
\begin{proof}
Since $\bm Y^{\obs}=\Pi_0\bm X\bm\beta_0+\sigma_0\bm u^{\rel}$, we have
\[
\left\|(I_n-M_{(\Pi_0\bm X,\bm u^{\rel})})\bm Y^{\obs}\right\|_2^2=0,
\]
so the minimum value in \eqref{eq:repro_ideal_Pi} is $0$.

Let $\tilde\Pi\in\mP_{n,k}$ satisfy
$\|(I_n-M_{(\tilde\Pi\bm X,\bm u^{\rel})})\bm Y^{\obs}\|_2^2=0$.
Then, it holds that $\bm Y^{\obs}\in\range([\tilde\Pi\bm X,\bm u^{\rel}])$ so there exist
$(\tilde{\bm\beta},\tilde\sigma)$ such that
\[
\bm Y^{\obs}=\tilde\Pi\bm X\tilde{\bm\beta}+\tilde\sigma\bm u^{\rel}.
\]
Subtracting the true representation gives
\begin{equation}\label{eq:oracle-basic-eq}
\Pi_0\bm X\bm\beta_0-\tilde\Pi\bm X\tilde{\bm\beta}=(\tilde\sigma-\sigma_0)\bm u^{\rel}.
\end{equation}

Define $P:=\Pi_0^\top\tilde\Pi$ and $\tilde{\bm u}:=\Pi_0^\top\bm u^{\rel}$.
Multiplying \eqref{eq:oracle-basic-eq} by $\Pi_0^\top$ yields
\begin{equation}\label{eq:oracle-permuted}
\bm X\bm\beta_0- P\bm X\tilde{\bm\beta}=(\tilde\sigma-\sigma_0)\tilde{\bm u}.
\end{equation}
Note that $P\in\mP_{n,2k}$ , hence its fixed-point set
$T:=\{i\in[n]:P(i)=i\}$ satisfies $|T|\ge n-2k\ge p$.

Restrict \eqref{eq:oracle-permuted} to the coordinates in $T$. On the set $T$, we have
$(P\bm X\tilde{\bm\beta})_T=\bm X_T\tilde{\bm\beta}$, so it holds that
\begin{equation}\label{eq:oracle-on-T}
\bm X_T(\bm\beta_0-\tilde{\bm\beta})=(\tilde\sigma-\sigma_0)\tilde{\bm u}_T.
\end{equation}

We now show that $\tilde\sigma=\sigma_0$ and $\tilde{\bm\beta}=\bm\beta_0$ almost surely (conditional on $\bm X$).
Fix any non-identity $P\in\mP_{n,2k}$ with fixed-point set $T$.
Under Assumption~\ref{ass:generic-design}, $\rank(\bm X_T)=p$ almost surely.

If $|T|>p$, then $\range(\bm X_T)$ is a proper $p$-dimensional subspace of $\R^{|T|}$.
Since $\tilde{\bm u}_T\sim\Normal(0,I_{|T|})$ has a Lebesgue density, we have
$\bP(\tilde{\bm u}_T\in\range(\bm X_T))=0$.
Therefore \eqref{eq:oracle-on-T} cannot hold with $\tilde\sigma\ne\sigma_0$; hence $\tilde\sigma=\sigma_0$ and then
$\tilde{\bm\beta}=\bm\beta_0$.

If $|T|=p$, then $\bm X_T$ is square and invertible almost surely, and \eqref{eq:oracle-on-T} gives
\[
\tilde{\bm\beta}=\bm\beta_0-(\tilde\sigma-\sigma_0)\bm X_T^{-1}\tilde{\bm u}_T.
\]
Plugging this into \eqref{eq:oracle-permuted} on any index $i\notin T$ yields an equation of the form
\[
(\tilde\sigma-\sigma_0)\left\{\tilde u_i-a_i^\top \tilde{\bm u}_T\right\}=b_i,
\]
where $a_i$ and $b_i$ are nonrandom given $\bm X$ and $(P,\bm\beta_0)$.
Since $P\neq I_n$ implies $|[n]\setminus T|\ge 2$, using two such indices shows that the event
$\tilde\sigma\ne\sigma_0$ forces a nontrivial algebraic relation among independent Gaussian coordinates
$\tilde u_i,\tilde u_j,\tilde{\bm u}_T$, which has probability zero. Hence $\tilde\sigma=\sigma_0$ and then
$\tilde{\bm\beta}=\bm\beta_0$ also in this boundary case.

Taking a union bound over the finite set of possible $P\in\mP_{n,2k}$ yields that,
\[
\tilde\sigma=\sigma_0
\quad\text{and}\quad
P\bm X\bm\beta_0=\bm X\bm\beta_0
\]
almost surely. Finally, since the entries of $\bm X\bm\beta_0$ are pairwise distinct by assumption,
the only permutation fixing $\bm X\bm\beta_0$ is the identity, so we have $P=I_n$ and $\tilde\Pi=\Pi_0$.
This proves uniqueness of $\Pi_0$ in \eqref{eq:repro_ideal_Pi}.
\end{proof}

\subsection{Proof of Theorem \ref{thm:recovery}}

\begin{proof}
For a nonnegative integer $m$, let $D_m$ denote the number of derangements of $m$ elements, i.e.,
the number of permutations of $\{1,\dots,m\}$ with no fixed points (see Lemma~\ref{lem:k-sparse permutation}).

Define $\Psi(x):=x-1-\log x$ for $x>0$. Fix any $\gamma\in(0,1/4]$ and recall that
\[
\mC^{(L)}=\{\hat\Pi_\ell:\ell=1,\dots,L\}.
\]
Define
\begin{align}
\label{eq:Delta-gamma}
\Delta(\gamma)
:=
&\sum_{m=n-k}^n\binom{n}{m}D_{n-m}
\Bigg\{
\exp\Bigg(
-\frac{n}{2}\Psi\left(
\max\left\{1,
\frac{C_{\min}(\Pi_0)\log(e/\gamma)}{2\sigma_0^2\gamma}
\right\}
\right)
\Bigg)
\nonumber\\
&\qquad\qquad+
\exp\Bigg(
-\frac{n}{2}\Psi\left(
\max\left\{1,
\frac{C_{\min}(\Pi_0)\log^2(e/\gamma)}{16\sigma_0^2}
\right\}
\right)
\Bigg)
\Bigg\}
\nonumber\\
&\quad+
\left(\frac{\pi}{2}\right)^{n-p-2}\lvert\mP_{n,k}\rvert
\left(\gamma\log\frac{e}{\gamma}\right)^{\frac{n-p-1}{2}}.
\end{align}

Also, define the alignment events as
\[
A_\ell := \{\phi(\bm U_\ell^*,\bm U)>1-\gamma^2\},
\qquad \ell=1,\dots,L,
\]
and the (censored) stopping index
\[
\tau := \min\{\ell\in\{1,\dots,L\}: A_\ell\},
\qquad \text{with the convention }\tau=\infty \text{ if }A_1,\dots,A_L\text{ all fail.}
\]
If $\Pi_0\notin\mC^{(L)}$, then either $\tau=\infty$ or $\hat\Pi_\tau\neq\Pi_0$ holds when $\tau\le L$. Hence, we have
\begin{equation}\label{eq:recovery-split}
\bP(\Pi_0\notin\mC^{(L)})
\le \bP(\tau=\infty)+\bP(\hat\Pi_\tau\neq\Pi_0,\ \tau\le L).
\end{equation}

Lemma~\ref{lem:similarity measure} (applied with $\gamma_2=\gamma$) gives
\[
\bP(\tau=\infty)
=\bP\left(\bigcap_{\ell=1}^L A_\ell^c\right)
\le \left(1-\frac{\gamma^{n-1}}{n-1}\right)^L.
\]

Define $B_\ell := \{\hat\Pi_\ell\neq \Pi_0\}$ for $\ell=1,\dots,L$.
Conditionally on $\bm U$, it holds that $\bm U_1^*,\dots,\bm U_L^*$ are i.i.d.\ and, for each $\ell$,
both $A_\ell$ and $B_\ell$ are measurable functions of $(\bm U,\bm U_\ell^*)$.
Hence the pairs $(A_\ell,B_\ell)$ are i.i.d.\ across $\ell$ given $\bm U$.
Define
\[
q(\bm U):=\bP(A_1\mid \bm U),\qquad r(\bm U):=\bP(A_1\cap B_1\mid \bm U).
\]
By the definition of $\tau$, for $t=1,\dots,L$, we have
\[
\{\tau=t\}=\left(\bigcap_{j=1}^{t-1}A_j^c\right)\cap A_t,
\]
so by conditional independence, we have
\begin{align*}
\bP(B_\tau,\ \tau\le L\mid \bm U)
&=\sum_{t=1}^L \bP(\tau=t,\ B_t\mid \bm U)\\
&=\sum_{t=1}^L \bP\left(\bigcap_{j=1}^{t-1}A_j^c,\ A_t\cap B_t\mid \bm U\right)\\
&=\sum_{t=1}^L (1-q(\bm U))^{t-1} r(\bm U)\\
&= r(\bm U)\frac{1-(1-q(\bm U))^L}{q(\bm U)}
= \bigl(1-(1-q(\bm U))^L\bigr)\bP(B_1\mid \bm U, A_1)\\
&\le \bP(B_1\mid \bm U, A_1).
\end{align*}

Note that $q(\bm U)=\bP(A_1\mid \bm U)$ is a constant (does not depend on $\bm U$) by rotational
invariance of $\bm U_1^*\sim N(0,I_n)$ and the fact that $A_1$ depends on $\bm U_1^*,\bm U$
only through their angle. This yields that
\[
\bP(B_\tau,\tau\le L)
=\sum_{t=1}^L (1-q)^{t-1}\bP(A_1\cap B_1)
=(1-(1-q)^L)\bP(B_1\mid A_1)
\le \bP(B_1\mid A_1)\le \Delta(\gamma),
\]
where the last inequality is from Lemma~\ref{lem:inconsistency}.

Therefore, we have
\[
\bP(\hat\Pi_\tau\neq\Pi_0,\ \tau\le L)\le \Delta(\gamma).
\]

Combining the bounds with \eqref{eq:recovery-split} gives that, for any $\gamma\in(0,1/4]$,
\[
\bP(\Pi_0\notin\mC^{(L)})
\le \Delta(\gamma)+\left(1-\frac{\gamma^{n-1}}{n-1}\right)^L.
\]

Now set 
\begin{equation}\label{eq:gamma-L}
  \gamma_L := \min\left\{\frac14, \left(\frac{(n-1)\log(e L)}{L}\right)^{1/(n-1)}\right\}.  
\end{equation}

Since $(1-x)^L\le e^{-Lx}$ for $x\in[0,1]$, for $L$ large enough, we have
\[
\left(1-\frac{\gamma_L^{n-1}}{n-1}\right)^L
\le \exp\left(-\frac{L\gamma_L^{n-1}}{n-1}\right)
\le \exp(-\log(eL))=\frac{1}{eL}.
\]
Moreover, by inspection of \eqref{eq:Delta-gamma}, the only term in $\Delta(\gamma_L)$ contributing at polynomial order in $L$
is $\left(\gamma_L\log(e/\gamma_L)\right)^{(n-p-1)/2}$, and hence we obtain
\[
\Delta(\gamma_L)
=
O\left(\left(\gamma_L\log\frac{e}{\gamma_L}\right)^{\frac{n-p-1}{2}}\right)
=
O\left(\frac{(\log L)^{\frac{n-p}{2}}}{L^{\frac{n-p-1}{2(n-1)}}}\right).
\]
This yields the claim.
\end{proof}

\subsection{Proof of Theorem \ref{thm:p-value}}
We prove Proposition~\ref{thm:p-value_without_delta} first, and then conclude Theorem~\ref{thm:p-value} by adding the localization error.

\begin{proposition}[Type-I error control for the sparsity test]\label{thm:p-value_without_delta}
For any $k_0\le k$ and $\theta_0=(\Pi_0,\bm\beta_0,\sigma_0)$ with $\Pi_0\in\mP_{n,k_0}$, we have
\begin{equation}\label{eq:conditional-validity-k0}
\bP_{\left(\mathcal{U}^L, \bm{U}\right)}\left(
D(\bm Y_{\theta_0})\notin \hat{B}^{(k_0)}_{1-\alpha}\ \text{ and }\ \Pi_0\in \mC^{(L)}
\mid 
S_{1,\Pi_0}(\bm Y_{\theta_0}),S_{2,\Pi_0}(\bm Y_{\theta_0})
\right)\le \alpha.
\end{equation}
\end{proposition}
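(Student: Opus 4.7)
The plan is to reduce the composite-null conditional probability to a single-permutation quantile statement by using the localization event $\{\Pi_0\in\mC^{(L)}\}$. On that event, the true permutation is automatically included in the localized null set, so the worst-case quantile over $\hat{\mP}_{n,k_0}$ dominates the quantile at the truth. Once this dominance is in place, nuisance-freeness of the conditional law of $\bm Y_{\theta_0}$ given the sufficient statistics delivers the desired $\alpha$ bound.

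First I would exploit the composite structure. Under $H_0$ we have $\Pi_0\in\mP_{n,k_0}$, so on $\{\Pi_0\in\mC^{(L)}\}$ the intersection $\hat{\mP}_{n,k_0}=\mC^{(L)}\cap\mP_{n,k_0}$ contains $\Pi_0$. Hence by the definition \eqref{eq:critical-value-composite} of $\widehat c_{1-\alpha}^{(k_0)}$ as a maximum over $\hat{\mP}_{n,k_0}$,
\[
\widehat c_{1-\alpha}^{(k_0)}\ \ge\ \widehat c_{1-\alpha}(\Pi_0)
\qquad \text{on } \{\Pi_0\in\mC^{(L)}\}.
\]
Since $D(\bm Y_{\theta_0})\notin \hat{B}^{(k_0)}_{1-\alpha}$ is precisely $\{D(\bm Y_{\theta_0})>\widehat c_{1-\alpha}^{(k_0)}\}$, this inclusion yields
\[
\left\{D(\bm Y_{\theta_0})\notin \hat{B}^{(k_0)}_{1-\alpha}\right\}\cap\{\Pi_0\in\mC^{(L)}\}
\ \subseteq\ \left\{D(\bm Y_{\theta_0})>\widehat c_{1-\alpha}(\Pi_0)\right\}.
\]
After taking conditional probability given $(S_{1,\Pi_0}(\bm Y_{\theta_0}),S_{2,\Pi_0}(\bm Y_{\theta_0}))$, the problem reduces to bounding $\bP\bigl(D(\bm Y_{\theta_0})>\widehat c_{1-\alpha}(\Pi_0)\,\big|\, S_{1,\Pi_0},S_{2,\Pi_0}\bigr)$ by $\alpha$.

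Next I would invoke the conditional representation \eqref{eq:conditional-representation}: conditional on the pair $(S_{1,\Pi_0}(\bm Y_{\theta_0}),S_{2,\Pi_0}(\bm Y_{\theta_0}))$, the law of $\bm Y_{\theta_0}$ is completely determined by $\Pi_0$ and is free of the nuisance $(\bm\beta_0,\sigma_0)$. Because $\widehat c_{1-\alpha}(\Pi_0)$ is, by construction, the $(1-\alpha)$-quantile of $D(\bm Y)$ under exactly this conditional law (evaluated at the realized sufficient statistics), the generic quantile inequality $\bP(X>q_{1-\alpha}(X))\le \alpha$ immediately gives the $\alpha$ bound. This step uses the discreteness of $D$ harmlessly, since the quantile is defined as the smallest integer threshold with the required upper-tail mass.

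The main obstacle will be book-keeping the randomness of $\mC^{(L)}$ inside the statistic $D$. The set $\mC^{(L)}$ is measurable with respect to $(\bm Y_{\theta_0},\mathcal{U}^L)$, which means $D(\cdot)$ is not a function of $\bm Y_{\theta_0}$ alone and $\widehat c_{1-\alpha}(\Pi)$ is not a function of the sufficient statistics alone. I would handle this by first conditioning on $\mathcal{U}^L$, which is independent of $\bm U$ (and hence of $\bm Y_{\theta_0}$): for each fixed realization of $\mathcal{U}^L$, $\mC^{(L)}$ becomes a deterministic function of $\bm Y_{\theta_0}$, so $D$ and $\widehat c_{1-\alpha}(\Pi_0)$ are measurable in the right way, and the conditional representation plus the quantile bound apply verbatim. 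Integrating the resulting inequality back over $\mathcal{U}^L$ preserves the $\alpha$ bound and produces \eqref{eq:conditional-validity-k0}. Theorem~\ref{thm:p-value} then follows by integrating out the sufficient statistics and adding the localization error $\delta_L$ from Theorem~\ref{thm:recovery} on the complementary event $\{\Pi_0\notin\mC^{(L)}\}$.
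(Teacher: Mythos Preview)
Your proposal is correct and follows essentially the same approach as the paper: establish the event inclusion $\{D>\widehat c_{1-\alpha}^{(k_0)}\}\cap\{\Pi_0\in\mC^{(L)}\}\subseteq\{D>\widehat c_{1-\alpha}(\Pi_0)\}$ from the max-over-$\hat{\mP}_{n,k_0}$ definition, then invoke the conditional representation \eqref{eq:conditional-representation} and the defining quantile property. The only difference is that you are more explicit about the dependence of $\mC^{(L)}$ (and hence $D$ and $\widehat c_{1-\alpha}(\Pi_0)$) on $\mathcal{U}^L$, handling it by an extra layer of conditioning on $\mathcal{U}^L$; the paper's proof leaves this implicit.
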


\begin{proof}
Fix $\theta_0=(\Pi_0,\bm\beta_0,\sigma_0)$ with $\Pi_0\in\mP_{n,k_0}$ and write
\[
s_{1,0}:=S_{1,\Pi_0}(\bm Y_{\theta_0}),
\qquad
s_{2,0}:=S_{2,\Pi_0}(\bm Y_{\theta_0}).
\]
By the conditional representation \eqref{eq:conditional-representation} (with $\Pi=\Pi_0$), conditionally on $(s_{1,0},s_{2,0})$ we have
\[
\bm Y_{\theta_0}\mid (s_{1,0},s_{2,0})
\ \overset{d}{=}\
s_{1,0}
+
s_{2,0}\frac{(I_n-M_{\Pi_0\bm X})\widetilde{\bm U}}{\|(I_n-M_{\Pi_0\bm X})\widetilde{\bm U}\|_2},
\]
where $\widetilde{\bm U}\sim\Normal(0,I_n)$ is independent of $(s_{1,0},s_{2,0})$.

On the event $\{\Pi_0\in \mC^{(L)}\}$, by definition of the localized null set (see Section~\ref{subsubsection:testing permutation}) we have
$\Pi_0\in\hat{\mP}_{n,k_0}$, and hence the composite critical value in \eqref{eq:critical-value-composite} satisfies
\[
\widehat c_{1-\alpha}^{(k_0)}
=\max_{\Pi\in \hat{\mP}_{n,k_0}}\widehat c_{1-\alpha}(\Pi)
\ \ge\
\widehat c_{1-\alpha}(\Pi_0),
\]
where $\widehat c_{1-\alpha}(\Pi_0)$ is the conditional $(1-\alpha)$-quantile of $D(\bm Y)$ given
$\left(S_{1,\Pi_0}(\bm Y),S_{2,\Pi_0}(\bm Y)\right)=(s_{1,0},s_{2,0})$
(as defined in Section~\ref{subsubsection:testing permutation}).

Therefore, we have
\begin{align*}
&\bP_{\left(\mathcal{U}^L, \bm{U}\right)}\left(
D(\bm Y_{\theta_0})\notin \hat B^{(k_0)}_{1-\alpha}\ \text{ and }\ \Pi_0\in \mC^{(L)}
\mid s_{1,0},s_{2,0}
\right)\\
&\qquad\le
\bP_{\left(\mathcal{U}^L, \bm{U}\right)}\left(
D(\bm Y_{\theta_0})>
\widehat c_{1-\alpha}(\Pi_0)
\mid s_{1,0},s_{2,0}
\right)
 \le \alpha,
\end{align*}
where the last inequality follows from the defining property of the conditional $(1-\alpha)$-quantile.
\end{proof}

\begin{proof}[Proof of Theorem~\ref{thm:p-value}]
Let $\theta_0=(\Pi_0,\bm\beta_0,\sigma_0)$ with $\Pi_0\in\mP_{n,k_0}$.
By a union bound, we have
\begin{align*}
\bP_{\left(\mathcal{U}^L,\bm U\right)}\left(D(\bm Y_{\theta_0})\notin \hat{B}^{(k_0)}_{1-\alpha}\right)
&\le
\bP_{\left(\mathcal{U}^L,\bm U\right)}\left(D(\bm Y_{\theta_0})\notin \hat{B}^{(k_0)}_{1-\alpha}\ \text{and}\ \Pi_0\in\mC^{(L)}\right)
+
\bP_{\left(\mathcal{U}^L,\bm U\right)}\left(\Pi_0\notin\mC^{(L)}\right)\\
&\le \alpha+\delta_L,
\end{align*}
where the second inequality uses Proposition~\ref{thm:p-value_without_delta} for the first term and the localization guarantee
$\bP(\Pi_0\notin\mC^{(L)})\le \delta_L$ for the second term (see Theorem~\ref{thm:recovery}).
\end{proof}

\subsection{Proof of Theorem \ref{thm:confidence}}
\begin{proof}
By definition, the confidence set is given by
\[
\Gamma_{\alpha}^{\mathrm{coef}}(\bm{Y}^{\obs})
= \bigcup_{\Pi\in \mC^{(L)}}\left\{\bm{\beta}\in \R^p:\ T(\bm{Y}^{\obs},(\Pi,\bm{\beta}))\le F_{p,n-p}^{-1}(\alpha)\right\}.
\]
Hence, on the event $\{\Pi_0\in \mC^{(L)}\}$ we have
\[
\{\bm{\beta}_0\notin \Gamma_{\alpha}^{\mathrm{coef}}(\bm{Y}^{\obs})\}\cap\{\Pi_0\in \mC^{(L)}\}
\subseteq
\left\{T(\bm{Y}^{\obs},(\Pi_0,\bm{\beta}_0))> F_{p,n-p}^{-1}(\alpha)\right\}.
\]
Under $\bm{Y}^{\obs}=\Pi_0\bm X\bm\beta_0+\sigma_0\bm U$ with $\bm U\sim \Normal(0,I_n)$ and $\Pi_0\bm X$ having rank $p$, we have
$T(\bm{Y}^{\obs},(\Pi_0,\bm{\beta}_0))\sim F_{p,n-p}$.
Therefore, it holds that
\begin{align*}
\bP_{\left(\mathcal{U}^L, \bm{U}\right)}\left(\bm{\beta}_0\notin \Gamma_{\alpha}^{\mathrm{coef}}(\bm{Y}^{\obs})\right)
&\le 
\bP_{\bm U}\left(T(\bm{Y}^{\obs},(\Pi_0,\bm{\beta}_0))> F_{p,n-p}^{-1}(\alpha)\right)
+
\bP_{\left(\mathcal{U}^L, \bm{U}\right)}\left(\Pi_0\notin \mC^{(L)}\right)\\
&\le (1-\alpha)+\delta_L,
\end{align*}
which is equivalent to the claimed coverage lower bound
$\bP_{\left(\mathcal{U}^L, \bm{U}\right)}\left(\bm{\beta}_0\in \Gamma_{\alpha}^{\mathrm{coef}}(\bm{Y}^{\obs})\right)\ge \alpha-\delta_L$.
\end{proof}

\subsection{Proof of Theorem \ref{thm:high-probability equivalence}}\label{subsec: proof of hungarian} Throughout this subsection, let $\bm{Y}$ be the abbreviation of $\bm{Y}^{\obs}$. Also, set
\begin{align*}
B_Y(\xi)&:= \|\Pi_0\bm X\bm\beta_0\|_2^2 + 2\sigma_0\|\Pi_0\bm X\bm\beta_0\|_2\sqrt{2\log\frac{2}{\xi}}
 + \sigma_0^2 \left(n+2\sqrt{n\log\frac{2}{\xi}}+2\log\frac{2}{\xi}\right),
\end{align*}
and
\begin{align}
    B_{\rm diag}(\xi)
&:= \|(I_n-M_{\bm X})\Pi_0\bm X\bm\beta_0\|_2^2+\sigma_0^2 
+2\sigma_0\|(I_n-M_{\bm X})\Pi_0\bm X\bm\beta_0\|_2\sqrt{2\log\frac{16n}{\xi}} \label{eq:bdiag}
\\ &+2\sigma_0^2 \left(\sqrt{\log\frac{16n}{\xi}}+\log\frac{16n}{\xi}\right). \nonumber
\end{align}

Further, define 
\begin{equation}\label{eq:eta-op-def}
\eta_{\rm op}(\xi) :=   
\frac{8 B_Y(\xi)\cdot\sqrt{k+2\sqrt{k\log \frac{4k+2}{\xi}}+2\log \frac{4k+2}{\xi}}}
{\sqrt{(n-p)-2\sqrt{(n-p)\log \frac{2}{\xi}}}-2\sqrt{k+2\sqrt{k\log \frac{4k+2}{\xi}}+2\log \frac{4k+2}{\xi}}}.
\end{equation}

Recall that the objective functions of the main optimization problem \eqref{eq:repro_Pi} and the score-weighted LAP \eqref{def:tilde_Pi} are given by
\[
F(\Pi)=\left\|(I_n-M_{(\Pi\bm X,\bm U^*)}) \bm Y\right\|_2^2,
\qquad
G(\Pi)=G_0(\Pi)+R(\Pi),
\]
where $G_0(\Pi):=\sum_i(Y_i-m_{\pi(i)})^2$ with $\bm m:=M_{(\bm X,\bm U^*)}\bm Y$ and
\[
R(\Pi)=\sum_{i=1}^n  \left\{\lambda_1   \1\{\pi(i)\neq i\}-\lambda_2   \1\{\pi(i)=i\} (Y_i-m_i)^2\right\}.
\]
The following lemma gives the condition that the solution of the score-weighted LAP is localized within $\mP_{n,k}$.

\begin{lemma}\label{lem:lap-minimizer-in-Pnk}
Assume $\lambda_1>0$ and define
\[
\tilde{\Pi}\in\argmin_{\Pi\in\mP_n} G(\Pi),
\qquad
G(\Pi)=\sum_{i=1}^n (Y_i-m_{\pi(i)})^2
+\sum_{i=1}^n\left\{\lambda_1\1\{\pi(i)\neq i\}-\lambda_2\1\{\pi(i)=i\}(Y_i-m_i)^2\right\},
\]
where $\bm m:=M_{(\bm X,\bm U^*)}\bm Y$.
Then, it holds that
\begin{equation}\label{eq:sparsity-bound-by-residual}
d(\tilde{\Pi},I_n)\ \le\ \frac{\|\bm Y-\bm m\|_2^2}{\lambda_1}.
\end{equation}
In particular, for any integer $k\ge 1$, if $\lambda_1\ge \|\bm Y-\bm m\|_2^2/k$, then it holds that $\tilde{\Pi}\in\mP_{n,k}$.
Moreover, since $\|\bm Y-\bm m\|_2^2\le \|\bm Y\|_2^2$, on the event $\{\|\bm Y\|_2^2\le B_Y(\xi)\}$ it holds that
$\tilde{\Pi}\in\mP_{n,k}$ whenever $\lambda_1\ge B_Y(\xi)/k$.
\end{lemma}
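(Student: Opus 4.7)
\medskip

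The plan is to exploit the optimality of $\tilde{\Pi}$ only against a single explicit competitor, namely the identity permutation $I_n$. Since $\tilde{\Pi}\in\argmin_{\Pi\in\mP_n} G(\Pi)$ (with the minimization taken over all of $\mP_n$, not just $\mP_{n,k}$), we automatically have $G(\tilde{\Pi})\le G(I_n)$. Evaluating at $I_n$ is easy: every term of the first sum contributes $(Y_i-m_i)^2$, every $\1\{\pi(i)\neq i\}$ vanishes, and every $\1\{\pi(i)=i\}$ equals one, so $G(I_n)=(1-\lambda_2)\,\|\bm Y-\bm m\|_2^2$.

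To evaluate $G(\tilde{\Pi})$, I split the index set into the fixed points $F:=\{i:\tilde{\pi}(i)=i\}$ and the non-fixed points $N:=\{i:\tilde{\pi}(i)\neq i\}$, so that $|N|=D:=d(\tilde{\Pi},I_n)$. The contribution from $F$ is $(1-\lambda_2)\sum_{i\in F}(Y_i-m_i)^2$, while the contribution from $N$ equals $\sum_{i\in N}(Y_i-m_{\tilde{\pi}(i)})^2+\lambda_1 D$ (because on $N$ the diagonal credit is switched off and the off-diagonal penalty fires exactly $D$ times). Inserting both into $G(\tilde{\Pi})\le G(I_n)$, the sums over $F$ cancel and I am left with
\begin{equation*}
\lambda_1 D+\sum_{i\in N}(Y_i-m_{\tilde{\pi}(i)})^2\ \le\ (1-\lambda_2)\sum_{i\in N}(Y_i-m_i)^2.
\end{equation*}

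Now the bound $D\le \|\bm Y-\bm m\|_2^2/\lambda_1$ follows in two short steps. First, the quadratic term on the left-hand side is nonnegative and can be dropped. Second, I control the $\lambda_2$-dependent right-hand side: if $0\le \lambda_2\le 1$, then $(1-\lambda_2)\le 1$, so the right-hand side is at most $\sum_{i\in N}(Y_i-m_i)^2\le \|\bm Y-\bm m\|_2^2$; if $\lambda_2>1$, the right-hand side is nonpositive, forcing $D=0$, which trivially satisfies the bound. Thus $\lambda_1 D\le \|\bm Y-\bm m\|_2^2$, which is \eqref{eq:sparsity-bound-by-residual}. The ``in particular'' clause is then immediate: $\lambda_1\ge \|\bm Y-\bm m\|_2^2/k$ forces $D\le k$, i.e.\ $\tilde{\Pi}\in\mP_{n,k}$. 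For the final statement, since $\bm m=M_{(\bm X,\bm U^*)}\bm Y$ is an orthogonal projection of $\bm Y$, Pythagoras gives $\|\bm Y-\bm m\|_2^2=\|\bm Y\|_2^2-\|\bm m\|_2^2\le\|\bm Y\|_2^2$, and on the event $\{\|\bm Y\|_2^2\le B_Y(\xi)\}$ the threshold $\lambda_1\ge B_Y(\xi)/k$ implies $\lambda_1\ge \|\bm Y-\bm m\|_2^2/k$, so $\tilde{\Pi}\in\mP_{n,k}$.

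There is essentially no hard step here; the only place that requires a small amount of care is handling the $\lambda_2$-credit term, which is why I separate the regimes $\lambda_2\le 1$ and $\lambda_2>1$ above. Notice also that I never need to use the structure of the LAP solver: the proof only uses $\tilde{\Pi}$'s optimality against $I_n$, which is why the sparsity-inducing penalty $\lambda_1\1\{\pi(i)\neq i\}$ acts as an $\ell_0$-type budget that directly translates into the Hamming bound.
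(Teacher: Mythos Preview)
Your proof is correct and follows essentially the same approach as the paper: both compare $G(\tilde{\Pi})\le G(I_n)$, compute $G(I_n)=(1-\lambda_2)\|\bm Y-\bm m\|_2^2$, and extract the Hamming bound from optimality. The only cosmetic difference is that the paper uses the cruder lower bound $G(\Pi)\ge \lambda_1 d(\Pi,I_n)-\lambda_2\|\bm Y-\bm m\|_2^2$ (dropping all of $G_0$ and bounding the diagonal credit by the full residual), which makes the $\lambda_2$-terms cancel exactly and avoids your case split on $\lambda_2\lessgtr 1$.
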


\begin{proof}
Write $r:=\|\bm Y-\bm m\|_2^2=\sum_{i=1}^n (Y_i-m_i)^2$.
For any $\Pi\in\mP_n$, we have $G_0(\Pi):=\sum_{i=1}^n (Y_i-m_{\pi(i)})^2\ge 0$ and
\[
R(\Pi)
=\lambda_1 d(\Pi,I_n)-\lambda_2\sum_{i:\pi(i)=i}(Y_i-m_i)^2
\ge \lambda_1 d(\Pi,I_n)-\lambda_2 r,
\]
since $\sum_{i:\pi(i)=i}(Y_i-m_i)^2\le r$. Therefore, we have
\[
G(\Pi)=G_0(\Pi)+R(\Pi)\ \ge\ \lambda_1 d(\Pi,I_n)-\lambda_2 r.
\]
For the identity permutation $I_n$, we have
\[
G(I_n)=\sum_{i=1}^n\{(Y_i-m_i)^2-\lambda_2(Y_i-m_i)^2\}=(1-\lambda_2)r.
\]
By its optimality, we have $G(\tilde{\Pi})\le G(I_n)$, which yields that
\[
\lambda_1 d(\tilde{\Pi},I_n)-\lambda_2 r
\le G(\tilde{\Pi})
\le (1-\lambda_2)r,
\]
This implies that $\lambda_1 d(\tilde{\Pi},I_n)\le r$ in \eqref{eq:sparsity-bound-by-residual}.
The remaining claims follow immediately, and $\|\bm Y-\bm m\|_2^2\le \|\bm Y\|_2^2$ holds because $\bm m$ is an orthogonal projection of $\bm Y$.
\end{proof}

For any edge $(\Pi,\tau)$ in the Cayley graph generated by transpositions, write the one–step increments
\[
\Delta_H^{\mathrm{edge}}(\Pi,\tau):=H(\Pi\circ\tau)-H(\Pi), \qquad H\in\{F,G_0,G\}.
\]

For $\Pi_{(0)},\cdots, \Pi_{(r)}\in\mP_{n,k}$, we work along a fixed transposition path:
\[
\Pi^{(0)}\xrightarrow{\tau_0}\Pi_{(1)}\xrightarrow{\tau_1}\cdots
\xrightarrow{\tau_{r-1}}\Pi_{(r)},
\qquad \Pi_{(\ell+1)}=\Pi_{(\ell)} \circ \tau_\ell,
\]
and denote the set of visited permutations by $\mS_{\rm path}:=\{\Pi_{(0)},\cdots,\Pi_{(r)}\}$, so that we have $|\mS_{\rm path}|=m=r+1$. When we allow all transpositions as generators, one may always take $r\le 2k$.

First, we show a path-wise comparison of edge increments, where the following bounds control the discrepancy between two objective functions $F$ and $G_0$ only along the given path.

\begin{proposition}\label{prop:edge-compare-path}
Let $\Pi_{(0)}\to\Pi_{(1)}\to\cdots\to\Pi_{(r)}$ be a shortest path in $\mathcal{G}_k$.
Let $\mS_{\rm path} = \{\Pi_{(0)},\ldots,\Pi_{(r)}\}$ and $m=r+1$.
From Proposition~\ref{prop:finite-path-op}, on the event
\begin{align*}
\mathcal{E}_{\rm path}(\xi)
&:= \Bigg\{
\max_{\Pi'\in\mS_{\rm path}}
\|\Pi' M_{(\bm X,\bm U^*)}-M_{(\Pi'\bm X,\bm U^*)}\|_{\rm op}\\
&\le 
\frac{4\sqrt{k+2\sqrt{k\log\frac{4k+2}{\xi}}+2\log\frac{4k+2}{\xi}}}
{\sqrt{(n-p)-2\sqrt{(n-p)\log \frac{2}{\xi}}}-2\sqrt{k+2\sqrt{k\log \frac{4k+2}{\xi}}+2\log \frac{4k+2}{\xi}}}
\Bigg\},
\end{align*}
for every edge $(\Pi_{(\ell)},\tau_\ell)$ along the path, it holds that
\begin{align*}
&\left|\frac{1}{2} \Delta_{G_0}^{\mathrm{edge}}(\Pi_{(\ell)},\tau_\ell)-\Delta_F^{\mathrm{edge}}(\Pi_{(\ell)},\tau_\ell)\right|\\
&\le  
\frac{8 \|\bm Y\|_2^2 \sqrt{k+2\sqrt{k\log\frac{4k+2}{\xi}}+2\log\frac{4k+2}{\xi}}}
{\sqrt{ (n-p)-2\sqrt{(n-p)\log(2/\xi)}}-2\sqrt{k+2\sqrt{k\log\frac{4k+2}{\xi}}+2\log\frac{4k+2}{\xi}}}.
\end{align*}

Consequently, for the same edge, we have
\begin{align*}
    \left| \Delta_G^{\mathrm{edge}}(\Pi_{(\ell)},\tau_\ell)-\Delta_{G_0}^{\mathrm{edge}}(\Pi_{(\ell)},\tau_\ell)\right|
    =\left|\Delta_R^{\mathrm{edge}}(\Pi_{(\ell)},\tau_\ell)\right|
    \le 2\lambda_1+2\lambda_2\max_{i\in[n]} (Y_i-m_i)^2.
\end{align*}
\end{proposition}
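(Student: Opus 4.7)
The plan is to exploit the decomposition $F(\Pi) = \tfrac{1}{2}G_0(\Pi) + T_2(\Pi) + T_3$ from \eqref{eq:lap-core-decomp}, in which $T_3$ does not depend on $\Pi$. Taking edge differences along the path therefore reduces the first inequality to controlling $|\Delta_{T_2}^{\mathrm{edge}}|$, which I will express as a quadratic form in $\bm Y$ and then bound using the operator-norm event $\mathcal{E}_{\rm path}(\xi)$ inherited from Proposition \ref{prop:finite-path-op}. The second inequality is purely combinatorial: a transposition moves only two indices, so both the Hamming distance and the set of fixed points shift by at most $2$.

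For the first bound, since $\bm m = M_{(\bm X,\bm U^*)}\bm Y$ and $\sum_{i,j} Y_i m_j \Pi_{ij} = \bm Y^\top\Pi\bm m$, I would rewrite
\[
T_2(\Pi) = \bm Y^\top\left(\Pi M_{(\bm X,\bm U^*)} - M_{(\Pi\bm X,\bm U^*)}\right)\bm Y.
\]
The elementary inequality $|\bm Y^\top A\bm Y|\le \|A\|_{\rm op}\|\bm Y\|_2^2$ then yields, for any $\Pi'\in\mS_{\rm path}$,
\[
|T_2(\Pi')|\le \left\|\Pi' M_{(\bm X,\bm U^*)} - M_{(\Pi'\bm X,\bm U^*)}\right\|_{\rm op}\|\bm Y\|_2^2.
\]
On $\mathcal{E}_{\rm path}(\xi)$ this operator norm is uniformly bounded by the explicit quantity in the definition of the event, which is exactly the conclusion of Proposition \ref{prop:finite-path-op} applied with $|\mS_{\rm path}|\le 2k+1$. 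Since $\Pi_{(\ell)},\Pi_{(\ell+1)}\in\mS_{\rm path}$, the triangle inequality
\[
|\Delta_{T_2}^{\mathrm{edge}}(\Pi_{(\ell)},\tau_\ell)|\le |T_2(\Pi_{(\ell+1)})|+|T_2(\Pi_{(\ell)})|
\]
doubles the numerator constant from $4$ to $8$, producing the stated $8\|\bm Y\|_2^2$ factor.

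For the second bound, I would write $R(\Pi)=\lambda_1 d(\Pi,I_n)-\lambda_2\sum_{i:\pi(i)=i}(Y_i-m_i)^2$. Since $\tau_\ell$ is a transposition affecting only two indices, moving from $\Pi_{(\ell)}$ to $\Pi_{(\ell)}\circ\tau_\ell$ changes the Hamming distance by at most $2$, contributing at most $2\lambda_1$, and alters the fixed-point set by at most those two indices, each contributing a credit term of magnitude at most $\lambda_2\max_i(Y_i-m_i)^2$. Combining gives $|\Delta_R^{\mathrm{edge}}|\le 2\lambda_1+2\lambda_2\max_i(Y_i-m_i)^2$.

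The main bookkeeping challenge is matching the constants in $\mathcal{E}_{\rm path}(\xi)$: since the path has $|\mS_{\rm path}|=r+1\le 2k+1$ vertices, one applies Proposition \ref{prop:finite-path-op} with $m\le 2k+1$ and splits the probability budget $\xi$ equally between Lemmas \ref{lem:perm-diff-finite} and \ref{lem:denom-lb}, which accounts for the $\log((4k+2)/\xi)$ and $\log(2/\xi)$ terms in the displayed event. Beyond this, the argument is essentially a direct reading of the decomposition \eqref{eq:lap-core-decomp} together with elementary combinatorics for transpositions.
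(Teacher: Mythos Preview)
Your proposal is correct and follows essentially the same approach as the paper: both use the identity $\tfrac{1}{2}\Delta_{G_0}^{\mathrm{edge}}-\Delta_F^{\mathrm{edge}}=T_2(\Pi_{(\ell)})-T_2(\Pi_{(\ell+1)})$ with $T_2(\Pi)=\bm Y^\top(\Pi M_{(\bm X,\bm U^*)}-M_{(\Pi\bm X,\bm U^*)})\bm Y$, then apply the triangle inequality together with the operator-norm bound from Proposition~\ref{prop:finite-path-op}, and both handle the regularizer term by the same two-index transposition argument. Your explicit bookkeeping of $|\mS_{\rm path}|\le 2k+1$ and the $\xi/2$ split is a helpful elaboration of details the paper leaves implicit.
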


\begin{proof}
By the identity
\begin{align}    
&\frac{1}{2}\Delta_{G_0}^{\mathrm{edge}}-\Delta_F^{\mathrm{edge}}\\
&=\bm Y^\top  \left(\Pi_{(\ell)}M_{(\bm X,\bm U^*)}-M_{(\Pi_{(\ell)}\bm X,\bm U^*)}\right)  \bm Y
-\bm Y^\top  \left( (\Pi_{(\ell)} \circ \tau_\ell) M_{(\bm X,\bm U^*)}-M_{((\Pi_{(\ell)} \circ \tau_\ell)\bm X,\bm U^*)}\right)  \bm Y, 
\end{align}
the triangle inequality and Proposition \ref{prop:finite-path-op} give the first inequality. 

Furthermore, only two rows are affected by an adjacent transposition $\tau_{(\ell)}$, hence the $\lambda_1$–term changes by at most $2\lambda_1$. The $\lambda_2$–term involves at most two diagonal positions and contributes at most $2\lambda_2\max_i (Y_i-m_i)^2$. This gives the second inequality.
\end{proof}

Next step is to derive an explicit relation between $C_{\min}(\Pi_0)$ and $\min_{\Pi\in\mP_{n,k}, \Pi\neq\hat\Pi}\left\{F(\Pi)-F(\hat\Pi)\right\}$.

We also define the corresponding (random) separation gap
\begin{equation}\label{eq:delta-F}
\Delta_F
:= \min_{\Pi\in\mP_{n,k}\setminus\{\hat\Pi\}}
\left\{F(\Pi)-F(\hat\Pi)\right\},
\end{equation}
where $\hat\Pi=\hat\Pi(\bm U,\bm U^*)$ is the minimizer of $F(\cdot)$ over $\mP_{n,k}$.
We adopt the convention that $\Delta_F=0$ if the minimizer of $F$ over $\mP_{n,k}$ is not unique.

\begin{proposition}\label{prop:F separation gap}
Assume $n-p \ge 2$ and recall $m_k := |\mP_{n,k}|$.
Define the functions
\begin{equation}\label{eq:theta1}
\theta_1(\xi) := \left(\frac{\xi}{6m_k}\right)^{\frac{1}{n-p-1}},
\qquad
\gamma_1(\xi) := \cos\left(\theta_1(\xi)\right),
\end{equation}
and
\begin{equation}\label{eq:bar-BU}
\bar B_U(\xi):=1+2\sqrt{\frac{\log(6/\xi)}{n}}+2\frac{\log(6/\xi)}{n}.
\end{equation}
Furthermore, define
\begin{equation}\label{eq:Delta-under-xi}
\underline\Delta_\xi :=
(1-\gamma_1(\xi)^2)C_{\min}
-2\sigma_0\sqrt{C_{\min}\bar B_U(\xi)}
-\sigma_0^2 \bar B_U(\xi).
\end{equation}
Then, with probability at least $1-\xi$, the separation gap $\Delta_F$ satisfies
\[
\Delta_F \ge n\underline\Delta_\xi.
\]
In particular, if $\underline\Delta_\xi>0$, then we have $\hat\Pi=\Pi_0$ and $\Delta_F>0$.
\end{proposition}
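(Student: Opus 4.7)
The plan is to obtain a uniform lower bound on $F(\Pi)-F(\Pi_0)$ for every $\Pi\in\mP_{n,k}\setminus\{\Pi_0\}$ on a single event of probability at least $1-\xi$, and then read off both $\Delta_F\ge n\underline\Delta_\xi$ and the identity $\hat\Pi=\Pi_0$. Writing $\mu_0:=\Pi_0\bm X\bm\beta_0$ and $P_\Pi:=I_n-M_{(\Pi\bm X,\bm U^*)}$, the identity $P_{\Pi_0}\mu_0=0$ yields the standard expansion
\[
F(\Pi)-F(\Pi_0)=\|P_\Pi\mu_0\|_2^2+2\sigma_0\bm U^\top P_\Pi\mu_0+\sigma_0^2\bm U^\top\left(M_{(\Pi_0\bm X,\bm U^*)}-M_{(\Pi\bm X,\bm U^*)}\right)\bm U,
\]
and the three terms will be handled separately.

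For the signal term I will invoke Lemma~\ref{lem:max-sim} with $\gamma_1=\gamma_1(\xi)=\cos\theta_1(\xi)$; the choice $\theta_1(\xi)^{n-p-1}=\xi/(6m_k)$ produces the tail bound $\frac{2}{\pi}m_k\theta_1(\xi)^{n-p-1}=\xi/(3\pi)$, so on the complementary event $E_1$ the identity $\|P_\Pi\mu_0\|_2^2=(1-\phi_\Pi)\|(I_n-M_{\Pi\bm X})\mu_0\|_2^2$ (used already in the proof of Lemma~\ref{lem:F-union}) combined with \eqref{eq:c min} gives $\|P_\Pi\mu_0\|_2^2\ge(1-\gamma_1(\xi)^2)\,n\,C_{\min}(\Pi_0)$ uniformly in $\Pi$. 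The cross term is controlled by Cauchy-Schwarz, $|2\sigma_0\bm U^\top P_\Pi\mu_0|\le 2\sigma_0\|\bm U\|_2\|P_\Pi\mu_0\|_2$, and the quadratic term by the elementary bound $\|M_A-M_B\|_{\mathrm{op}}\le 1$ valid for any pair of orthogonal projections, yielding $|\sigma_0^2\bm U^\top(M_{(\Pi_0\bm X,\bm U^*)}-M_{(\Pi\bm X,\bm U^*)})\bm U|\le \sigma_0^2\|\bm U\|_2^2$. The Laurent-Massart $\chi_n^2$ tail (Lemma~1 of \cite{laurent2000adaptive}) with $t=\log(6/\xi)$ then produces an event $E_2$ of probability at least $1-\xi/6$ on which $\|\bm U\|_2^2\le n\bar B_U(\xi)$, and a union bound gives $\bP(E_1\cap E_2)\ge 1-\xi/(3\pi)-\xi/6\ge 1-\xi$.

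On $E_1\cap E_2$, setting $t:=\|P_\Pi\mu_0\|_2\ge a:=\sqrt{n(1-\gamma_1(\xi)^2)C_{\min}(\Pi_0)}$ and $u:=\sigma_0\|\bm U\|_2\le u_0:=\sigma_0\sqrt{n\bar B_U(\xi)}$, the three bounds combine to give $F(\Pi)-F(\Pi_0)\ge t^2-2tu-u^2\ge t^2-2tu_0-u_0^2$, using monotonicity in $u$ in the last step. The step I expect to be the main obstacle is the behavior in $t$: the quadratic $t\mapsto t^2-2tu_0-u_0^2$ is monotone increasing only for $t\ge u_0$, so replacing $t$ by the lower bound $a$ is not automatic. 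The saving observation is that $n\underline\Delta_\xi=a^2-2au_0-u_0^2$ is positive precisely when $a>(1+\sqrt{2})\,u_0>u_0$; thus whenever $\underline\Delta_\xi>0$ we have $t\ge a>u_0$, which puts us in the monotone branch and yields $F(\Pi)-F(\Pi_0)\ge n\underline\Delta_\xi$ for all $\Pi\neq\Pi_0$. This simultaneously proves the claimed bound and the "in particular" consequence $\hat\Pi=\Pi_0$ and $\Delta_F>0$; when $\underline\Delta_\xi\le 0$ the inequality $\Delta_F\ge n\underline\Delta_\xi$ is automatic by the stated convention ($\Delta_F\ge 0$ when $\hat\Pi$ is unique, $\Delta_F=0$ otherwise).
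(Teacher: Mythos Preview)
Your argument is correct and follows essentially the same route as the paper: the same expansion of $F(\Pi)-F(\Pi_0)$, the same two high-probability events (Lemma~\ref{lem:max-sim} for the angle and the Laurent--Massart $\chi^2$ bound for $\|\bm U\|_2^2$), and the same crude bounds on the cross and quadratic terms. Your explicit case split on the sign of $\underline\Delta_\xi$ and the monotonicity check for the quadratic $t\mapsto t^2-2tu_0-u_0^2$ is a genuine improvement over the paper, which hides this step behind ``a direct calculation''; one small slip is that $a^2-2au_0-u_0^2$ equals $n\bigl[(1-\gamma_1^2)C_{\min}-2\sigma_0\sqrt{(1-\gamma_1^2)C_{\min}\bar B_U}-\sigma_0^2\bar B_U\bigr]\ge n\underline\Delta_\xi$ rather than $=n\underline\Delta_\xi$, but the inequality goes the right way and your argument still closes (since $\underline\Delta_\xi>0$ forces $h(a)\ge n\underline\Delta_\xi>0$, hence $a>(1+\sqrt 2)u_0$).
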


\begin{proof}
Define $A_\Pi := M_{(\Pi\bm X,\bm U^*)}$ and $A_0:=M_{(\Pi_0\bm X,\bm U^*)}$.
Fix $\Pi\in\mP_{n,k}\setminus\{\Pi_0\}$.
Since it holds that $\Pi_0\bm X\bm\beta_0\in \mathrm{range}(A_0)$, we have
\begin{align*}
F(\Pi)-F(\Pi_0)
&=\|(I_n-A_\Pi)\Pi_0\bm X\bm\beta_0\|_2^2
+2\sigma_0\bm U^\top (I_n-A_\Pi)\Pi_0\bm X\bm\beta_0
-\sigma_0^2 \bm U^\top (A_\Pi-A_0)\bm U.
\end{align*}
Define $\bm w_\Pi:=(I_n-M_{\Pi\bm X})\Pi_0\bm X\bm\beta_0$.
A direct calculation gives
\[
\|(I_n-A_\Pi)\Pi_0\bm X\bm\beta_0\|_2^2
=\|\bm w_\Pi\|_2^2\left(1-\phi\left((I_n-M_{\Pi\bm X})\bm U^*,\bm w_\Pi\right)\right).
\]
Define the events
\[
\mathcal{E}_1(\xi):=\left\{\max_{\Pi\in\mP_{n,k}}
\phi\left((I_n-M_{\Pi\bm X})\bm U^*,\bm w_\Pi\right)\le \gamma_1(\xi)^2\right\},
\qquad
\mathcal{E}_3(\xi):=\left\{\|\bm U\|_2^2\le n\bar B_U(\xi)\right\}.
\]
By Lemma~\ref{lem:max-sim} and the choice of $\theta_1(\xi)$, we have
$\bP(\mathcal{E}_1(\xi)^c)\le \xi/3$.
By a standard $\chi^2$ upper-tail bound, we have
$\bP(\mathcal{E}_3(\xi)^c)\le \xi/3$.
Hence $\bP(\mathcal{E}_1(\xi)\cap \mathcal{E}_3(\xi))\ge 1-2\xi/3\ge 1-\xi$.

Furthermore, on $\mathcal{E}_1(\xi)$, we have
\[
\|(I_n-A_\Pi)\Pi_0\bm X\bm\beta_0\|_2^2\ge (1-\gamma_1(\xi)^2)\|\bm w_\Pi\|_2^2.
\]
For the cross term, using $\|(I_n-A_\Pi)\bm U\|_2\le \|\bm U\|_2$, it yields that
\[
\left|2\sigma_0\bm U^\top (I_n-A_\Pi)\Pi_0\bm X\bm\beta_0\right|
\le 2\sigma_0\|\bm U\|_2\|\bm w_\Pi\|_2.
\]
For the quadratic term, since $A_\Pi-A_0$ is a difference of orthogonal projections, we have
\[
\sigma_0^2 \bm U^\top (A_\Pi-A_0)\bm U
\le \sigma_0^2 \|\bm U\|_2^2.
\]
Therefore, on $\mathcal{E}_1(\xi)\cap \mathcal{E}_3(\xi)$, we have
\[
F(\Pi)-F(\Pi_0)
\ge (1-\gamma_1(\xi)^2)\|\bm w_\Pi\|_2^2
-2\sigma_0\sqrt{n\bar B_U(\xi)}\|\bm w_\Pi\|_2
-\sigma_0^2 n\bar B_U(\xi).
\]
Using the definition of $C_{\min}$, we obtain
$\|\bm w_\Pi\|_2^2\ge n C_{\min}\cdot \max\{d(\Pi_0,I_n)-d(\Pi,I_n),1\}\ge nC_{\min}$,
and a direct calculation yields
$F(\Pi)-F(\Pi_0)\ge n\underline\Delta_\xi$.
Taking the minimum over $\Pi\neq \Pi_0$ gives $\Delta_F\ge n\underline\Delta_\xi$.
If $\underline\Delta_\xi>0$, then it holds that $F(\Pi)>F(\Pi_0)$ for all $\Pi\neq \Pi_0$, hence the claim that $\hat\Pi=\Pi_0$ holds.
\end{proof}

We are now in position to prove the main result (Theorem \ref{thm:high-probability equivalence}). 

\begin{proof}
Remind that 
\begin{align*}
B_Y(\xi) & = \|\Pi_0\bm X\bm\beta_0\|_2^2 + 2\sigma_0\|\Pi_0\bm X\bm\beta_0\|_2\sqrt{2\log\frac{2}{\xi}}
 + \sigma_0^2 \left(n+2\sqrt{n\log\frac{2}{\xi}}+2\log\frac{2}{\xi}\right),
\end{align*}
and
\begin{align*}
    B_{\rm diag}(\xi)
 = &\|(I_n-M_{\bm X})\Pi_0\bm X\bm\beta_0\|_2^2+\sigma_0^2
+2\sigma_0\|(I_n-M_{\bm X})\Pi_0\bm X\bm\beta_0\|_2\sqrt{2\log\frac{16n}{\xi}}
\\ 
&+2\sigma_0^2 \left(\sqrt{\log\frac{16n}{\xi}}+\log\frac{16n}{\xi}\right).
\end{align*}
By Lemma \ref{lem:Y-and-diag}, with probability at least $1-\xi$, we also have
$\|\bm Y\|_2^2\le B_Y(\xi)$.
Define the event
\[
\mathcal{E}_{\rm op}(\xi):=\mathcal E_{\rm path}(\xi)\cap\{\|\bm Y\|_2^2\le B_Y(\xi)\}.
\]
On $\mathcal{E}_{\rm op}(\xi)$, Proposition \ref{prop:edge-compare-path} yields the uniform edge-wise control:
\begin{equation}\label{eq:edge-eta}
\left|
\frac{1}{2}\Delta_{G_0}^{\rm edge}(\Pi_{(\ell)},\tau_\ell)-\Delta_F^{\rm edge}(\Pi_{(\ell)},\tau_\ell)
\right|
\le\eta_{\rm op}(\xi),
\qquad \ell=0,\ldots,r-1,
\end{equation}
with $\eta_{\rm op}(\xi)$ as in \eqref{eq:eta-op-def}.

For the regularizer
$
R(\Pi)=\lambda_1\sum_{i}\1\{\pi(i)\neq i\}-\lambda_2\sum_{i}\1\{\pi(i)=i\}(Y_i-m_i)^2,
$
a single transposition affects at most two rows, hence
\begin{equation}\label{eq:edge-R}
\left|\Delta_R^{\rm edge}(\Pi_{(\ell)},\tau_\ell)\right| \le 2\lambda_1 + 2\lambda_2 \max_{1\le i\le n}(Y_i-m_i)^2.
\end{equation}
By Lemma~\ref{lem:Y-and-diag} (in particular \eqref{eq:diag-sq-finite}), with probability at least $1-\xi$, we have
\[
\max_{1\le i\le n} (Y_i-m_i)^2
\le \max_{1\le i\le n}\mathbb{E}\big[(Y_i-m_i)^2\mid \bm U^*\big]
     + \max_{1\le i\le n}\Big|(Y_i-m_i)^2-\mathbb{E}\big[(Y_i-m_i)^2\mid \bm U^*\big]\Big|.
\]
Moreover, writing $\bm\mu:=(I_n-M_{(X,\bm U^*)})\Pi_0X\beta_0$, we have
$\mathbb{E}\big[(Y_i-m_i)^2\mid \bm U^*\big]=\mu_i^2+\sigma_0^2\|(I_n-M_{(X,\bm U^*)})e_i\|_2^2\le \|\bm\mu\|_\infty^2+\sigma_0^2$,
and $\|\bm\mu\|_\infty\le \|\bm\mu\|_2\le \|(I_n-M_X)\Pi_0X\beta_0\|_2$.
Set
\[
\mathcal{E}_{\rm diag}(\xi):=\left\{\max_{1\le i\le n}(Y_i-m_i)^2\le B_{\rm diag}(\xi)\right\}.
\]
On $\mathcal{E}_{\rm diag}(\xi)$, \eqref{eq:edge-R} gives that, for every edge along the path,
\begin{equation}\label{eq:edge-R-ub}
\left|\Delta_R^{\rm edge}(\Pi_{(\ell)},\tau_\ell)\right| \le 2\lambda_1+2\lambda_2 B_{\rm diag}(\xi),
\qquad \ell=0,\ldots,r-1.
\end{equation}

Summing \eqref{eq:edge-eta} over the $r\le 2k$ edges and using
$
H(\Pi)-H(\hat \Pi)=\sum_{\ell=0}^{r-1}\Delta_H^{\rm edge}(\Pi_{(\ell)},\tau_\ell)
$
for $H\in\{F,G_0\}$ yields
\begin{equation}\label{eq:G0-vs-F}
\left|
\frac{1}{2}\left(G_0(\Pi)-G_0(\hat\Pi)\right)-\left(F(\Pi)-F(\hat\Pi)\right)
\right|
 \le r \eta_{\rm op}(\xi) \le 2k \eta_{\rm op}(\xi).
\end{equation}
Likewise, summing \eqref{eq:edge-R-ub} along the path, we have
\begin{equation}\label{eq:G-vs-G0}
\left|\left(G(\Pi)-G(\hat\Pi)\right)-\left(G_0(\Pi)-G_0(\hat\Pi)\right)\right|
 \le r\left(2\lambda_1+2\lambda_2 B_{\rm diag}(\xi)\right) \le 2k\left(2\lambda_1+2\lambda_2 B_{\rm diag}(\xi)\right).
\end{equation}
Combining them yields the following inequalities:
\begin{align}
G(\Pi)-G(\hat\Pi)
& \ge 2\left(F(\Pi)-F(\hat\Pi)\right) - 2r \eta_{\rm op}(\xi) - r \left(2\lambda_1+2\lambda_2 B_{\rm diag}(\xi)\right)\nonumber\\
& \ge 2\Delta_F - 4k \eta_{\rm op}(\xi) - 4k\lambda_1 - 4k\lambda_2 B_{\rm diag}(\xi).
\label{eq:G-lower-final}
\end{align}

By Proposition \ref{prop:F separation gap}, the following event
\[
\mathcal{E}_{\rm gap}(\xi) := \left\{\min_{\Pi\in\mP_{n,k}, \Pi \neq \hat\Pi}\left\{F(\Pi)-F(\hat {\Pi}) \right\} \ge n\underline\Delta_\xi \right\},
\] 
has a probability at least $1 - \xi$.

On the intersection event
\[
\mathcal{E}_{\rm op}(\xi)\cap\mathcal{E}_{\rm diag}(\xi)\cap\mathcal{E}_{\rm gap}(\xi),
\]
the inequality \eqref{eq:G-lower-final} and the condition \eqref{eq:lambda-window} imply that, for every $\Pi\in\mP_{n,k}\setminus\{\hat\Pi\}$, we have
\[
G(\Pi)-G(\hat\Pi) \ge 2\Delta_F - 4k \eta_{\rm op}(\xi) - 4k\lambda_1 - 4k\lambda_2 B_{\rm diag}(\xi)
 > 0.
\]
Therefore, $G$ has a unique minimizer over $\mP_{n,k}$ and it must equal $\hat\Pi$.
Finally, by Lemmas~\ref{lem:perm-diff-finite} and~\ref{lem:denom-lb} (applied with $\xi/2$) and Lemma~\ref{lem:Y-and-diag}, we have
\[
\bP(\mathcal{E}_{\rm op}(\xi))\ge 1-2\xi,\qquad
\bP(\mathcal{E}_{\rm diag}(\xi))\ge 1-\xi,\qquad
\bP(\mathcal{E}_{\rm gap}(\xi))\ge 1-\xi,
\]
so a union bound yields the desired probability lower bound. By Lemma~\ref{lem:lap-minimizer-in-Pnk}, on the event $\{\|\bm Y\|_2^2\le B_Y(\xi)\}$ and if $\lambda_1\ge B_Y(\xi)/k$,
every minimizer of $G$ over $\mP_n$ belongs to $\mP_{n,k}$.
Hence, we obtain $\tilde\Pi\in\mP_{n,k}$ and therefore $\tilde\Pi=\hat\Pi$ by the uniqueness of the minimizer over $\mP_{n,k}$.
\end{proof}

\section{Additional theoretical results}\label{section:appendix theory}

\subsection{A counterexample when $n-2k < p$}
\begin{theorem}\label{thm:counterexample}
    Let $p<n$ and $n-2k \le p - 1$. Further, assume $k \ge 2$ so that a nontrivial transposition is permitted.
Then, there exist a full-column-rank matrix $\bm{X} \in \R^{n\times p}$, a pair of distinct permutations $\Pi_0,\Pi_1 \in \mP_{n,k}$,
and coefficient vectors $\bm{\beta}_0 \ne \bm{\beta}_1 \in \R^p$ such that
\[
\Pi_0 \bm{X} \bm{\beta}_0 = \Pi_1 \bm{X} \bm{\beta}_1
\]
holds. Consequently, the minimizers of \eqref{eq:repro_ideal} are not unique in general, even with $\sigma_0 = 0$.
\end{theorem}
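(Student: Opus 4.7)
The plan is to exhibit an explicit witness of non-identifiability by exploiting the dimensional deficit $|T|=n-2k\le p-1$. First I would pick two permutations $\Pi_0,\Pi_1\in\mP_{n,k}$ with disjoint supports so that $Q:=\Pi_0^{-1}\Pi_1$ moves the maximum possible $2k$ indices: concretely, let $\Pi_0$ be the $k$-cycle on $\{1,\dots,k\}$ and $\Pi_1$ the $k$-cycle on $\{k+1,\dots,2k\}$. Both have $d(\Pi_j,I_n)=k$ and hence lie in $\mP_{n,k}$, and $Q$ has moved set $T^c:=\{1,\dots,2k\}$ and fixed set $T:=\{2k+1,\dots,n\}$. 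The identifiability equation $\Pi_0\bm X\bm\beta_0=\Pi_1\bm X\bm\beta_1$ then decouples row-wise into
\begin{equation*}
\bm X_i^\top(\bm\beta_0-\bm\beta_1)=0\ \text{for}\ i\in T,\qquad
\bm X_i^\top\bm\beta_0=\bm X_{Q^{-1}(i)}^\top\bm\beta_1\ \text{for}\ i\in T^c.
\end{equation*}

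Next, assuming $p\ge 2$, I would specialize to $\bm\beta_0=e_1+e_2$ and $\bm\beta_1=e_2$, under which the constraints reduce to $X_{i,1}=0$ on $T$ and $X_{i,1}+X_{i,2}=X_{Q^{-1}(i),2}$ on $T^c$ and thus involve only the first two columns of $\bm X$. I would then prescribe the second column as $\bm c_2:=(1,2,\dots,n)^\top$, which determines the first column: $c_{1,i}=Q^{-1}(i)-i$ on $T^c$ and $c_{1,i}=0$ on $T$. A short check at two indices in $T^c$ shows $\bm c_1$ and $\bm c_2$ are linearly independent; the argument collapses because $Q^{-1}$ acts nontrivially on $T^c$ thanks to $k\ge 2$. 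The remaining columns $\bm c_3,\dots,\bm c_p$ carry no constraints, so I would pick them as any $p-2$ vectors in $\R^n$ extending $\{\bm c_1,\bm c_2\}$ to a linearly independent set, which is possible because $n\ge p+1$. Substituting the constructed $\bm X$ back into $\Pi_0\bm X\bm\beta_0=\Pi_1\bm X\bm\beta_1$ produces two distinct triples that both achieve the minimum value $0$ in \eqref{eq:repro_ideal} when $\sigma_0=0$.

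The main obstacle is covering the corner cases where this construction does not directly apply, namely $n<2k$ (when two disjoint $k$-cycles no longer fit into $\{1,\dots,n\}$) and $p=1$ (when there is no second column to absorb the off-diagonal constraint). For these I would use a minor variant: take $\Pi_0,\Pi_1$ to be disjoint transpositions such as $(1\,2)$ and $(3\,4)$, both $k$-sparse for $k\ge 2$, and set $\bm X=(1,-1,1,-1,0,\dots,0)^\top$ together with $\bm\beta_0=-\bm\beta_1\ne 0$. The sign flip between $\bm\beta_0$ and $\bm\beta_1$ is then absorbed by the $\pm$-symmetry of $\bm X$ under each transposition. In every case the mechanism is identical: the row-wise decomposition isolates a subsystem of rank at most $|T|\le p-1<p$, whose nontrivial kernel supplies a nonzero $\bm\beta_0-\bm\beta_1$ and hence the claimed non-unique minimizer.
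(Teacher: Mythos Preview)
Your main construction (disjoint $k$-cycles, $\bm\beta_0=e_1+e_2$, $\bm\beta_1=e_2$, explicit first two columns of $\bm X$) is correct and furnishes a valid counterexample when $p\ge 2$ and $n\ge 2k$. The corner-case patch, however, is incomplete in two respects. First, the variant you propose---disjoint transpositions $(1\,2),(3\,4)$ together with the column vector $\bm X=(1,-1,1,-1,0,\dots,0)^\top$---is a $p=1$ construction only; it does not address $p\ge 2$ with $n<2k$, which you flagged as a corner case but for which no $\bm X\in\R^{n\times p}$ is supplied (e.g.\ $(n,p,k)=(5,3,3)$ satisfies all hypotheses yet falls in this gap). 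Second, disjoint transpositions require $n\ge 4$, so the small cases $(n,p)\in\{(2,1),(3,1),(3,2)\}$, all admissible once $k\ge 2$, remain uncovered. Your closing remark that ``the mechanism is identical'' and rests on $|T|\le p-1$ is also inaccurate for the transposition variant, where $|T|=n-4$ need not be bounded by $p-1$.

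The paper avoids this case analysis by choosing the simplest possible pair: $\Pi_0=I_n$ and $\Pi_1$ a single transposition $(a\,b)$, both in $\mP_{n,k}$ for every $k\ge 2$ and every $n\ge 2$. For $p\ge 2$ it places all rows of $\bm X$ other than $a,b$ in the hyperplane $e_1^\perp$ and takes $\bm\beta_1-\bm\beta_0=e_1$, so the swap of rows $a,b$ is undone by the coefficient shift; for $p=1$ it sets rows $a,b$ to $\pm 1$, zeros elsewhere, and $\beta_1=-\beta_0$. This yields one construction per value of $p$ that works uniformly for all $n>p$, with no small-$n$ exceptions. The extra work in your approach comes from insisting that $Q=\Pi_0^{-1}\Pi_1$ move exactly $2k$ indices so that $|T|=n-2k$ matches the hypothesis; the paper's route shows that matching this threshold is unnecessary for building the counterexample, and a single transposition already suffices.
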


\begin{proof}
Fix two distinct indices $a\neq b$ in $[n]$. Set $\Pi_0:=I_n$ and let $\Pi_1$ be the transposition that swaps $a$ and $b$.
Then $d(\Pi_1,I_n)=2\le k$ because $k\ge2$, hence we have $\Pi_0,\Pi_1\in\mP_{n,k}$.

Set
\[
t:=\max\{n-2k,0\}.
\]
By the assumption $n-2k\le p-1$ we have $t\le p-1$. Choose any subset $T\subset[n]$ with $|T|=t$ and set $S:=[n]\setminus T$.

We construct a full-column-rank design $\bm X\in\R^{n\times p}$ together with $\bm\beta_0\ne\bm\beta_1$
and $\Pi_0\bm X\bm\beta_0=\Pi_1\bm X\bm\beta_1$.

First, consider the case $p=1$. Put $\bm X\in\R^{n\times1}$ with $X_a=1$, $X_b=-1$, and $X_i=0$ for $i\notin\{a,b\}$.
Then, we have $\rank(\bm X)=1$ (full column rank). Take $\beta_0=1$ and $\beta_1=-1$.
We have $(\bm X\beta_0)_a=1$ and $(\bm X\beta_0)_b=-1$, while $\bm X\beta_1$ equals $-1$ at $a$ and $1$ at $b$,
so applying the transposition $\Pi_1$ swaps these two values and yields
\[
\Pi_1\bm X\beta_1=\bm X\beta_0=\Pi_0\bm X\beta_0,
\]
with $\beta_0\ne\beta_1$.

Next, consider the case $p\ge2$.
Let $v:=e_1\in\R^p$ and $w:=e_2\in\R^p$, so $w\in v^\perp$ and $\|v\|_2^2=1$. Define two row vectors
\[
r_a:=w+v,\qquad r_b:=w-v,
\]
so that we have $r_a v=1$ and $r_b v=-1$.

We define the rows of $\bm X$ as follows.
\begin{itemize}
\item Put $X_a:=r_a$ and $X_b:=r_b$.
\item For the $t$ indices in $T$, assign distinct standard basis vectors among $\{e_2,e_3,\dots,e_{t+1}\}$:
for example, after ordering $T=\{i_1,\dots,i_t\}$, set $X_{i_j}:=e_{j+1}$.
This gives $\rank(\bm X_T)=t$ and $\bm X_T v=0$.
\item For the remaining indices in $S\setminus\{a,b\}$, assign the missing basis vectors among $\{e_2,\dots,e_p\}$
so that the set of rows contains $\{e_2,\dots,e_p\}$ at least once; if there are extra rows, assign any vectors in $v^\perp$. This is feasible because $n\ge p+1$ (since $p<n$), hence
$|S\setminus\{a,b\}| = n-t-2 \ge p-t-1$.
\end{itemize}
By construction, all rows except $a,b$ lie in $v^\perp$, hence we have $X_i v=0$ for all $i\notin\{a,b\}$.
Moreover, the row set contains $\{r_a,e_2,\dots,e_p\}$, which is linearly independent, so $\rank(\bm X)=p$
(full column rank) holds.

Now choose $\bm\beta_0:=-\tfrac12 v+\tfrac12 w$ and $\bm\beta_1:=\bm\beta_0+v=\tfrac12 v+\tfrac12 w$.
Then, we have
\[
r_a\bm\beta_0=(v+w)\cdot\left(-\tfrac12 v+\tfrac12 w\right)=0,\qquad
r_b\bm\beta_0=(w-v)\cdot\left(-\tfrac12 v+\tfrac12 w\right)=1,
\]
and
\[
r_a\bm\beta_1=(v+w)\cdot\left(\tfrac12 v+\tfrac12 w\right)=1,\qquad
r_b\bm\beta_1=(w-v)\cdot\left(\tfrac12 v+\tfrac12 w\right)=0.
\]
For every $i\notin\{a,b\}$, because of $X_i\in v^\perp$ we have $X_i(\bm\beta_1-\bm\beta_0)=X_i v=0$, hence it holds that $X_i\bm\beta_1=X_i\bm\beta_0$.
Therefore, $\bm X\bm\beta_1$ equals $\bm X\bm\beta_0$ at all rows except $a,b$, where the two values are swapped.
Applying the transposition $\Pi_1$ thus yields
\[
\Pi_1\bm X\bm\beta_1=\bm X\bm\beta_0=\Pi_0\bm X\bm\beta_0,
\]
with $\bm\beta_0\ne\bm\beta_1$.

In either case, we construct the design matrix $\bm X$ with full column-rank, the permutations $\Pi_0\ne\Pi_1$ in $\mP_{n,k}$, and the coefficients $\bm\beta_0\ne\bm\beta_1$
satisfying $\Pi_0\bm X\bm\beta_0=\Pi_1\bm X\bm\beta_1$. This yilelds the non-uniqueness of \eqref{eq:repro_ideal} even when $\sigma_0=0$.
\end{proof}

\subsection{Theory on the partially permuted designs}

\begin{proposition}\label{prop:identification partial}
    Assume that the rows of design matrix $\bm{X}$ and $\bm{Z}$ are i.i.d. generated by a continuous distribution. Also, suppose $n - 2k \ge p_1 + p_2$. Then, true parameters
$(\Pi_0,\bm\beta_{0,1},\bm\beta_{0,2},\sigma_0)$ can be uniquely recovered by the following optimization problem:
\begin{equation}\label{eq:repro-ideal-partial}
(\Pi_0,\bm\beta_{0,1},\bm\beta_{0,2},\sigma_0)
=
\argmin_{\Pi\in\mP_{n,k},\ \bm\beta_1\in\R^{p_1},\ \bm\beta_2\in\R^{p_2},\ \sigma\ge 0}
\left\|
\bm Y^{\obs}-\Pi \bm X\bm\beta_1-\bm Z\bm\beta_2-\sigma \bm u^{\rel}
\right\|^2 .
\end{equation}
For a fixed $\Pi \in \mP_n$, define an expanded design matrix $W(\Pi) = [\Pi \bm{X}, \bm{Z}]$. Furthermore, it holds that
\begin{equation}
   \Pi_0 = \argmin_{\Pi \in \mP_{n,k}}\| (I_n - M_{(W(\Pi), \bm{u}^{\rel})}) \bm{Y}^{\obs}\|_2^2,
\end{equation}
where $M_{(W(\Pi), \bm{u}^{\rel})}$ is the projection matrix to the range of $(W(\Pi), \bm{u}^{\rel})$. 
\end{proposition}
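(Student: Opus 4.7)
The plan is to adapt the identification argument of Proposition~\ref{prop:identification_joint} by replacing the permuted design $\Pi\bm X$ with the augmented design $W(\Pi)=[\Pi\bm X,\bm Z]$, which still has $p_1+p_2$ columns but now also carries the unpermuted block $\bm Z$. At $(\Pi_0,\bm\beta_{0,1},\bm\beta_{0,2},\sigma_0)$ the objective in \eqref{eq:repro-ideal-partial} is zero, so it suffices to show that any other tuple attaining zero must equal the truth almost surely under the joint law of $(\bm X,\bm Z)$. The projection form then follows by profiling $(\bm\beta_1,\bm\beta_2,\sigma)$ out: for each fixed $\Pi$ the inner minimum of \eqref{eq:repro-ideal-partial} equals $\|(I_n-M_{(W(\Pi),\bm u^{\rel})})\bm Y^{\obs}\|_2^2$, so uniqueness of $\Pi_0$ as outer minimizer transfers.

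Suppose $(\tilde\Pi,\tilde{\bm\beta}_1,\tilde{\bm\beta}_2,\tilde\sigma)$ attains zero. Subtracting from the true representation and multiplying by $\Pi_0^{\top}$ gives
\[
\bm X\bm\beta_{0,1}-P\bm X\tilde{\bm\beta}_1+\Pi_0^{\top}\bm Z(\bm\beta_{0,2}-\tilde{\bm\beta}_2)=(\tilde\sigma-\sigma_0)\tilde{\bm u},
\]
where $P:=\Pi_0^{\top}\tilde\Pi\in\mP_{n,2k}$ and $\tilde{\bm u}:=\Pi_0^{\top}\bm u^{\rel}$. The fixed-point set $T$ of $P$ satisfies $|T|\ge n-2k\ge p_1+p_2$, and on this set $(P\bm X\tilde{\bm\beta}_1)_T=\bm X_T\tilde{\bm\beta}_1$, hence
\[
\bigl[\bm X_T,\ (\Pi_0^{\top}\bm Z)_T\bigr]\binom{\bm\beta_{0,1}-\tilde{\bm\beta}_1}{\bm\beta_{0,2}-\tilde{\bm\beta}_2}
=(\tilde\sigma-\sigma_0)\tilde{\bm u}_T.
\]

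Under the continuous joint distribution on $(\bm X,\bm Z)$, the $|T|\times(p_1+p_2)$ matrix $[\bm X_T,(\Pi_0^{\top}\bm Z)_T]$ has full column rank almost surely. Two cases follow the pattern of Proposition~\ref{prop:identification}. If $|T|>p_1+p_2$, the column range is a proper (hence Lebesgue-null) subspace of $\R^{|T|}$, and since $\tilde{\bm u}_T\sim\Normalp{0}{I_{|T|}}$ has a density, the event $\tilde\sigma\neq\sigma_0$ has probability zero; forcing $\tilde\sigma=\sigma_0$ and invoking full column rank yields $(\tilde{\bm\beta}_1,\tilde{\bm\beta}_2)=(\bm\beta_{0,1},\bm\beta_{0,2})$. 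If $|T|=p_1+p_2$, the matrix is square and invertible, so $(\bm\beta_{0,1}-\tilde{\bm\beta}_1,\bm\beta_{0,2}-\tilde{\bm\beta}_2)$ is determined by $\tilde{\bm u}_T$ up to the scalar $\tilde\sigma-\sigma_0$. Substituting back into the original equation at indices $i\notin T$, and using that $P\neq I_n$ implies $|[n]\setminus T|\ge 2$, comparing two such rows produces a nontrivial linear relation among independent Gaussian coordinates of $\tilde{\bm u}$, which holds with probability zero. Taking a union bound over the finite set $P\in\mP_{n,2k}$ preserves the zero-probability conclusion.

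Once $\tilde\sigma=\sigma_0$ and $(\tilde{\bm\beta}_1,\tilde{\bm\beta}_2)=(\bm\beta_{0,1},\bm\beta_{0,2})$, the residual equation reduces to $P\bm X\bm\beta_{0,1}=\bm X\bm\beta_{0,1}$. Under the implicit non-degeneracy $\bm\beta_{0,1}\neq\bm 0$ (without which $\Pi_0$ is trivially unidentified), the continuous distribution of $\bm X$ ensures that the entries of $\bm X\bm\beta_{0,1}$ are pairwise distinct almost surely, forcing $P=I_n$ and hence $\tilde\Pi=\Pi_0$. The main obstacle I expect is the boundary case $|T|=p_1+p_2$: there, the combined design is only just square, and ruling out $\tilde\sigma\neq\sigma_0$ requires careful exploitation of at least two off-fixed coordinates to produce a nontrivial algebraic identity that is almost surely violated. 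This step is essentially the partially permuted analogue of the corresponding step in the proof of Proposition~\ref{prop:identification_joint}, but with the augmented design matrix carrying the extra block $(\Pi_0^{\top}\bm Z)_T$, which one must verify does not degenerate the rank argument under the joint continuous distribution of $(\bm X,\bm Z)$.
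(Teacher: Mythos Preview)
Your proposal is correct and follows essentially the same approach as the paper: the paper's proof is a one-line reference stating that the strategy of Proposition~\ref{prop:identification} applies verbatim once $M_{(\Pi\bm X,\bm u^{\rel})}$ is replaced by $M_{(W(\Pi),\bm u^{\rel})}$, and you have simply written out that adaptation in full (introducing $P=\Pi_0^\top\tilde\Pi\in\mP_{n,2k}$, restricting to its fixed-point set $T$ with $|T|\ge n-2k\ge p_1+p_2$, and handling the two cases $|T|>p_1+p_2$ and $|T|=p_1+p_2$ exactly as in the proof of Proposition~\ref{prop:identification}).
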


\begin{proof}
Applying same strategy in the proof of Proposition \ref{prop:identification} and treating $M_{(\Pi \bm{X}, \bm{u}^{\rel})}$ as $M_{(W(\Pi), \bm{u}^{\rel})}$ yield the desired result.
\end{proof}

Recall the partially permuted model \eqref{eq:model_partial} and write $p:=p_1+p_2$.
For any $\Pi\in\mP_{n,k}$ define $W(\Pi):=[\Pi\bm X,\bm Z]$ and $M_{W(\Pi)}$ as in
\eqref{eq:Wpartial-def}.  The corresponding separation constant is given by
\begin{equation}\label{eq:c-min-partial}
C_{\min}^{\mathrm{partial}}(\Pi_0)
:=\min_{\substack{\Pi\in\mP_{n,k}\\ \Pi\neq \Pi_0}}
\frac{\left\| (I_n-M_{W(\Pi)})W(\Pi_0)\binom{\bm\beta_{1,0}}{\bm\beta_{2,0}}\right\|_2^2}
     {n\max\{d(\Pi_0,I_n)-d(\Pi,I_n),1\}}
=\min_{\substack{\Pi\in\mP_{n,k}\\ \Pi\neq \Pi_0}}
\frac{\left\| (I_n-M_{(\Pi\bm X,\bm Z)})\Pi_0\bm X\bm\beta_{1,0}\right\|_2^2}
     {n\max\{d(\Pi_0,I_n)-d(\Pi,I_n),1\}}.
\end{equation}

\begin{theorem}\label{thm:recovery partial}
Suppose $C_{\min}^{\mathrm{partial}}(\Pi_0) > 0$ and $n > p_1 + p_2 + 1$ hold.
Let $\Delta_{\mathrm{partial}}(\gamma)$ be defined as in~\eqref{eq:Delta-gamma} but with
$(C_{\min}(\Pi_0),p)$ replaced by $(C_{\min}^{\mathrm{partial}}(\Pi_0),p_1+p_2)$.

Then, for any $\gamma \in (0, 1/4]$ and any $L \ge 2$, we have
\begin{equation}\label{eq:recovery-finite-L-partial}
    \bP_{(\mathcal{U}^L, \bm{U})}\left( \Pi_0 \notin \mathcal{C}_{\mathrm{partial}}^{(L)} \right)
    \le
    \Delta_{\mathrm{partial}}(\gamma)
    + \left(1 - \frac{\gamma^{n-1}}{n-1}\right)^L.
\end{equation}
In particular, taking $\gamma = \gamma_L$ as in~\eqref{eq:gamma-L} gives
\begin{equation}\label{eq:recovery-finite-L-partial-gammaL}
    \bP_{(\mathcal{U}^L, \bm{U})}\left( \Pi_0 \notin \mathcal{C}_{\mathrm{partial}}^{(L)} \right)
    \le
    \delta_L^{\mathrm{partial}}
    :=
    \Delta_{\mathrm{partial}}(\gamma_L)
    + \left(1 - \frac{\gamma_L^{n-1}}{n-1}\right)^L
    = O\left( \frac{(\log L)^{(n-(p_1+p_2))/2}}{L^{(n-(p_1+p_2)-1)/(2(n-1))}} \right).
\end{equation}
\end{theorem}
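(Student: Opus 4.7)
The plan is to follow the proof of Theorem~\ref{thm:recovery} essentially verbatim, with the substitution $\Pi\bm X \mapsto W(\Pi)=[\Pi\bm X,\bm Z]$ throughout and the residual-rank substitution $n-p \mapsto n-(p_1+p_2)$. The backbone is unchanged: I define the alignment events $A_\ell:=\{\phi(\bm U_\ell^*,\bm U)>1-\gamma^2\}$ and the stopping index $\tau:=\min\{\ell\in[L]:A_\ell\}$, and split
\begin{equation*}
\bP\left(\Pi_0\notin\mathcal{C}_{\mathrm{partial}}^{(L)}\right)
\le \bP(\tau=\infty)+\bP\left(\hat\Pi_\tau^{\mathrm{partial}}\neq \Pi_0,\ \tau\le L\right).
\end{equation*}
The first term is bounded by $(1-\gamma^{n-1}/(n-1))^L$ via Lemma~\ref{lem:similarity measure}, which is independent of the design and so applies unchanged. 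The second term will be controlled by a partial analogue of Lemma~\ref{lem:inconsistency}, obtained by re-deriving Lemmas~\ref{lem:F-union} and~\ref{lem:max-sim} for $W(\Pi)$.

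The technical core is the observation that $(I_n-M_{W(\Pi)})\bm Z=\bm 0$ for every $\Pi\in\mP_{n,k}$, since $\bm Z\subseteq\range(W(\Pi))$. Consequently, defining $w_\Pi:=(I_n-M_{W(\Pi)})\Pi_0\bm X\bm\beta_{1,0}$, the annihilation gives
\begin{equation*}
\left(I_n-M_{(W(\Pi),\bm U^*)}\right)\left(\Pi_0\bm X\bm\beta_{1,0}+\bm Z\bm\beta_{2,0}\right)
= \left(I_n-M_{(W(\Pi),\bm U^*)}\right)\Pi_0\bm X\bm\beta_{1,0},
\end{equation*}
and, using the rank-one update $M_{(W(\Pi),\bm U^*)}=M_{W(\Pi)}+M_{(I_n-M_{W(\Pi)})\bm U^*}$, the squared norm equals $\|w_\Pi\|_2^2(1-\phi((I_n-M_{W(\Pi)})\bm U^*,w_\Pi))$. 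This is exactly the structure used in Lemma~\ref{lem:F-union}, so the same expansion of $F_{\mathrm{partial}}(\Pi)-F_{\mathrm{partial}}(\Pi_0)$ into signal, cross, and quadratic-noise terms applies, and the definition of $C_{\min}^{\mathrm{partial}}(\Pi_0)$ in~\eqref{eq:c-min-partial} yields $\|w_\Pi\|_2^2\ge nC_{\min}^{\mathrm{partial}}(\Pi_0)\max\{d(\Pi_0,I_n)-d(\Pi,I_n),1\}$, so the $\chi^2$-tail bound goes through with $C_{\min}$ replaced by $C_{\min}^{\mathrm{partial}}$. For the partial version of Lemma~\ref{lem:max-sim}, the residual projector $I_n-M_{W(\Pi)}$ has rank $n-(p_1+p_2)$; the uniform-on-the-sphere argument then gives a spherical-cap tail of the form $\frac{2}{\pi}|\mP_{n,k}|(\arccos\gamma_1)^{n-(p_1+p_2)-1}$, which is the only place where $p$ is replaced by $p_1+p_2$ in the exponent.

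Combining these partial analogues as in the proof of Theorem~\ref{thm:recovery} yields \eqref{eq:recovery-finite-L-partial} with $\Delta_{\mathrm{partial}}(\gamma)$ obtained from \eqref{eq:Delta-gamma} under the substitutions $C_{\min}(\Pi_0)\mapsto C_{\min}^{\mathrm{partial}}(\Pi_0)$ and $p\mapsto p_1+p_2$ (while $|\mP_{n,k}|$ is unchanged). Choosing $\gamma=\gamma_L$ as in~\eqref{eq:gamma-L} and repeating the asymptotic expansion at the end of the proof of Theorem~\ref{thm:recovery} produces the polynomial rate in~\eqref{eq:recovery-finite-L-partial-gammaL}. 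The main obstacle I anticipate is bookkeeping rather than a new idea: checking that the block structure of $W(\Pi)$ does not create a $\bm Z$-cross-term in the signal part of the $F_{\mathrm{partial}}$-expansion, and that the conditional argument for $\bP(B_1\mid A_1)$ in Lemma~\ref{lem:inconsistency} is unaffected by the presence of the unpermuted block $\bm Z$. Both reduce to the annihilation $(I_n-M_{W(\Pi)})\bm Z=\bm 0$ and the rotational invariance of $\bm U^*$, so the argument transfers cleanly, with the only quantitative change being the exponent $n-(p_1+p_2)$ inherited from the lower rank of the residual space.
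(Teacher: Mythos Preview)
Your proposal is correct and follows essentially the same approach as the paper, which proves Theorem~\ref{thm:recovery partial} in one line by invoking Theorem~\ref{thm:recovery} with the design $W(\Pi)=[\Pi\bm X,\bm Z]$ and total dimension $p=p_1+p_2$. Your write-up is simply a more explicit unpacking of why that substitution is legitimate, correctly isolating the key annihilation $(I_n-M_{W(\Pi)})\bm Z=\bm 0$ and the rank change $n-p\mapsto n-(p_1+p_2)$ as the only places where the argument needs adjustment.
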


\begin{proof}
This is an immediate application of Theorem~\ref{thm:recovery} to the design matrice
$\bm{W}(\Pi) = (\Pi \bm{X}, \bm{Z})$ with its total dimension $p=p_1+p_2$, for which the separation constant is
$C_{\min}^{\mathrm{partial}}(\Pi_0)$ as defined above.
\end{proof}

\begin{theorem}\label{thm:p-value partial}
    Let $\bm{U}^*$ be an independent copy of $\bm{U} \sim \Normal(0,I_n)$. For any $\alpha \in (0,1)$, it follows that
\begin{equation}
    \bP \left(\tilde{T}^{\mathrm{partial}}(\bm{U}, \bm{\theta}_{\mathrm{null}}) \in  \left( B_{1-\alpha}^{\mathrm{partial}}(\bm{\theta}_{\mathrm{null}}, S_{\bm{\theta}_{\mathrm{null}}}(\bm{U})) \right)^c \mid S_{\bm{\theta}_{\mathrm{null}}}^{\mathrm{partial}}(\bm{U}) = \bm{s} \right)  \le \alpha,
\end{equation}
for $\bm{\theta}_{\mathrm{null}} := (I_n, \bm{\beta}_1, \bm{\beta}_2, \sigma)$. Moreover, marginally, we have
\begin{equation}\label{eq:marginal validity}
\bP \left(\tilde{T}^{\mathrm{partial}}(\bm{U}, \bm{\theta}_{\mathrm{null}}) \in  \left( B_{1-\alpha}^{\mathrm{partial}}(\bm{\theta}_{\mathrm{null}}, S_{\bm{\theta}_{\mathrm{null}}}(\bm{U})) \right)^c\right)  \le \alpha.
\end{equation}
\end{theorem}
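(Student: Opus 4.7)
The strategy is to mirror the proof of Proposition~\ref{thm:p-value_without_delta}, with the role of the projector $M_{\Pi\bm X}$ replaced by $M_{W(\Pi)}$ where $W(\Pi)=[\Pi\bm X,\bm Z]$. Because the conditioning argument used in Section~\ref{subsubsection:testing permutation} depends only on the regression subspace (not on how it was built), all derivations transfer from the fully permuted design to the partial design by simply using $W(\Pi)$ throughout. The statement here corresponds to the point null $\Pi_0=I_n$, so the critical region is calibrated directly from the conditional quantile at $\Pi=I_n$ without the union step of \eqref{eq:critical-value-composite}; in particular, no $\delta_L^{\mathrm{partial}}$ surcharge is needed.

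Concretely, I would first record the partial analogue of \eqref{eq:sufficient statistics},
\begin{equation*}
S_{1,I_n}^{\mathrm{partial}}(\bm Y_{\bm\theta_{\mathrm{null}}}):=M_{W(I_n)}\bm Y_{\bm\theta_{\mathrm{null}}},
\qquad
S_{2,I_n}^{\mathrm{partial}}(\bm Y_{\bm\theta_{\mathrm{null}}}):=\left\|(I_n-M_{W(I_n)})\bm Y_{\bm\theta_{\mathrm{null}}}\right\|_2,
\end{equation*}
noting that under $\bm Y_{\bm\theta_{\mathrm{null}}}=\bm X\bm\beta_1+\bm Z\bm\beta_2+\sigma\bm U$ with $\bm U\sim\Normal(0,I_n)$, the pair $\left(S_{1,I_n}^{\mathrm{partial}},S_{2,I_n}^{\mathrm{partial}}\right)$ is sufficient (and complete) for $(\bm\beta_1,\bm\beta_2,\sigma)$. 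Then, exactly as in the derivation of \eqref{eq:conditional-representation}, the orthogonal decomposition $\bm Y_{\bm\theta_{\mathrm{null}}}=M_{W(I_n)}\bm Y_{\bm\theta_{\mathrm{null}}}+\sigma(I_n-M_{W(I_n)})\bm U$ combined with the rotational invariance of $\bm U$ inside $\range(I_n-M_{W(I_n)})$ yields the conditional identity
\begin{equation*}
\bm Y_{\bm\theta_{\mathrm{null}}}\ \Big|\ \left(S_{1,I_n}^{\mathrm{partial}},S_{2,I_n}^{\mathrm{partial}}\right)
\overset{d}{=}
S_{1,I_n}^{\mathrm{partial}}+S_{2,I_n}^{\mathrm{partial}}\cdot\frac{(I_n-M_{W(I_n)})\bm U}{\left\|(I_n-M_{W(I_n)})\bm U\right\|_2},
\end{equation*}
and in particular the conditional law of $\tilde T^{\mathrm{partial}}(\bm U,\bm\theta_{\mathrm{null}})$ does not depend on the nuisance $(\bm\beta_1,\bm\beta_2,\sigma)$.

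With this pivot in hand, the first claim is immediate: by construction $B_{1-\alpha}^{\mathrm{partial}}\left(\bm\theta_{\mathrm{null}},\bm s\right)$ is (the sub-level set of) the $(1-\alpha)$-quantile of this conditional law at $S_{\bm\theta_{\mathrm{null}}}^{\mathrm{partial}}(\bm U)=\bm s$, so the conditional probability of its complement is at most $\alpha$. The marginal statement \eqref{eq:marginal validity} then follows by the tower property,
\begin{equation*}
\bP\left(\tilde T^{\mathrm{partial}}(\bm U,\bm\theta_{\mathrm{null}})\notin B_{1-\alpha}^{\mathrm{partial}}\right)
=\bE\left[\bP\left(\tilde T^{\mathrm{partial}}(\bm U,\bm\theta_{\mathrm{null}})\notin B_{1-\alpha}^{\mathrm{partial}}\ \Big|\ S_{\bm\theta_{\mathrm{null}}}^{\mathrm{partial}}(\bm U)\right)\right]\le \alpha.
\end{equation*}

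There is no real obstacle here: the only technical point is checking that $(I_n-M_{W(I_n)})\bm U/\|(I_n-M_{W(I_n)})\bm U\|_2$ is ancillary for $(\bm\beta_1,\bm\beta_2,\sigma)$, which is a direct consequence of the rotational symmetry of a standard Gaussian restricted to a fixed subspace, exactly as in the fully permuted case. The only subtlety worth flagging explicitly in the write-up is that, since the test is calibrated at the exact point null rather than via the localized null set $\hat{\mathcal P}_{n,k_0}\cap\mathcal C_{\mathrm{partial}}^{(L)}$, no miscoverage error from the candidate-set localization enters, which is why the right-hand side of \eqref{eq:marginal validity} is $\alpha$ rather than $\alpha+\delta_L^{\mathrm{partial}}$.
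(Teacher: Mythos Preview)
Your proposal is correct and follows essentially the same approach as the paper: both arguments identify $(M_{W(I_n)}\bm Y,\|(I_n-M_{W(I_n)})\bm Y\|_2)$ as sufficient for $(\bm\beta_1,\bm\beta_2,\sigma)$, exploit the ancillarity of the residual direction $(I_n-M_{W(I_n)})\bm U/\|(I_n-M_{W(I_n)})\bm U\|_2$ to obtain a nuisance-free conditional law, and conclude by the definition of $B_{1-\alpha}^{\mathrm{partial}}$ and the tower property. The paper's write-up is slightly more explicit in one respect: it records the block-projector identity $M_{W(\Pi)}-M_{\bm Z}=M_{R_{\bm Z}\Pi\bm X}$ and uses it to show directly that, after conditioning, the statistic $\tilde T^{\mathrm{partial}}$ depends only on the ancillary direction (invoking Basu's theorem by name), and then matches laws via the independent copy $\bm U^*$; your sketch compresses this into the single conditional-representation display, which is fine.
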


\begin{proof}
    Define 
    \begin{equation}\label{eq:block-proj}
            M_{W(\Pi)}-M_{\bm Z}=M_{R_{\bm Z}\Pi\bm X}=:M_{\bm X_{\perp}(\Pi)},
\qquad
\text{and}\qquad
R_{\bm Z}=M_{\bm X_{\perp}(I_n)}+R_{W(I_n)},
    \end{equation}
    where the two projectors on the right-hand side of the second equality are orthogonal. Under the null parameter $\bm\theta_{\mathrm{null}}=(I_n,\bm\beta_1,\bm\beta_2,\sigma)$, we have
    \begin{equation}\label{eq:S-partial-correct}
S^{\mathrm{partial}}_{\bm\theta_{\mathrm{null}}}(\bm U)
:=\left(M_{W_0}\bm Y_{\bm\theta_{\mathrm{null}}}, \|R_{W_0}\bm Y_{\bm\theta_{\mathrm{null}}}\|_2\right)
=\left(M_{W_0}\bm Y_{\bm\theta_{\mathrm{null}}}, \sigma \|R_{W_0}\bm U\|_2\right).
\end{equation} is (minimal)
sufficient for $(\bm\beta_1,\bm\beta_2,\sigma)$, so that the direction
\[
\bm A(\bm U):=\frac{R_{W_0}\bm U}{\|R_{W_0}\bm U\|}
\]
is ancillary. By Basu's theorem, conditionally on $S^{\mathrm{partial}}_{\bm\theta_{\mathrm{null}}}$, $\bm A(\bm U)$ is uniformly
distributed on the unit sphere of the subspace $\range(R_{W_0})$ and is independent of
$(\bm\beta_1,\bm\beta_2,\sigma)$.

Consequently, for any realization $S=(s_1,s_2)$ of $S^{\mathrm{partial}}_{\bm\theta_{\mathrm{null}}}(\bm U)$, we have the conditional representation 
\begin{equation}\label{eq:cond-rep}
\bm Y_{\bm\theta_{\mathrm{null}}}=s_1+s_2\bm A(\bm U),\qquad
R_{\bm Z}\bm Y_{\bm\theta_{\mathrm{null}}}
=M_{\bm X_{\perp}(I_n)}s_1+s_2\bm A(\bm U),
\end{equation}
where $R_{\bm Z}=M_{\bm X_{\perp}(I_n)}+R_{W_0}$ from \eqref{eq:block-proj} and
$R_{W_0}s_1=0$.

By definition, we have
\[
\tilde{T}^{\mathrm{partial}}(\bm U,\bm\theta)
:=d\left(\argmin_{\Pi\in\mP_{n,k}}\|(I_n-M_{W(\Pi)}) \bm Y_{\bm\theta}\|_2^2,\ I_n\right)
=d \left(\argmax_{\Pi\in\mP_{n,k}}\|(M_{W(\Pi)}-M_{\bm Z}) \bm Y_{\bm\theta}\|_2^2, I_n\right),
\]
where the equivalence follows because $\|M_{W(\Pi)}\bm Y\|_2^2=\|M_{\bm Z}\bm Y\|_2^2+\|(M_{W(\Pi)}-M_{\bm Z})\bm Y\|_2^2$ holds
and the first term is $\Pi$-free. Using the identity \eqref{eq:block-proj}, we have
\[
(M_{W(\Pi)}-M_{\bm Z})\bm Y_{\bm\theta_{\mathrm{null}}}
=M_{\bm X_{\perp}(\Pi)} R_{\bm Z}\bm Y_{\bm\theta_{\mathrm{null}}}
=M_{\bm X_{\perp}(\Pi)}\left(M_{\bm X_{\perp}(I_n)} s_1+s_2 \bm A(\bm U) \right),
\]
hence, conditionally on $S = (s_1, s_2)$, the only randomness in $\tilde T^{\mathrm{partial}}(\bm U,\bm\theta_{\mathrm{null}})$ stems from
$\bm A(\bm U)$, while $M_{\bm X_{\perp}(I_n)}s_1$ and $s_2$ are fixed.

Let $\bm U^*$ be an independent copy of $\bm U$ and define
$\bm A(\bm U^*):=R_{W_0}\bm U^*/\|R_{W_0}\bm U^*\|$. By the property of the ancillary statistic,
$\bm A(\bm U) \mid S  \deq \bm A(\bm U^*) \mid S $, hence it holds that
\begin{equation}\label{eq:law-equality}
\tilde T^{\mathrm{partial}}(\bm U,\bm\theta_{\mathrm{null}}) \mid  S \deq 
\tilde T^{\mathrm{partial}}(\bm U^*,\bm\theta_{\mathrm{null}}) \mid  S.
\end{equation}
Therefore, by construction of $B_{1-\alpha}^{\mathrm{partial}}(\bm\theta_{\mathrm{null}},\bm s)$, we have
\[
\bP\left(
\tilde{T}^{\mathrm{partial}}(\bm U,\bm\theta_{\mathrm{null}})
\in \left(B_{1-\alpha}^{\mathrm{partial}}(\bm\theta_{\mathrm{null}},\bm s)\right)^c
\mid S_{\bm\theta_{\mathrm{null}}}^{\mathrm{partial}}(\bm U)=\bm s
\right)\le \alpha .
\]

Taking expectation with $S$, we also have the marginal result.
\end{proof}

\begin{theorem}[Joint confidence set for $(\bm{\beta}_1,\bm{\beta}_2)$]\label{thm:confidence-set-partial-joint}
Fix any $\alpha\in(0,1)$ and consider the partially permuted model~\eqref{eq:model_partial}.
Under the conditions of Theorem~\ref{thm:recovery partial}, the confidence set
$\Gamma_{\alpha}^{\mathrm{joint}}(\bm Y^{\obs})$ defined in~\eqref{eq:confset-partial-joint}
satisfies, for any $L\ge 2$,
\[
\bP_{(\mathcal U^L,\bm U)}\left(
(\bm\beta_{1,0},\bm\beta_{2,0}) \in \Gamma_{\alpha}^{\mathrm{joint}}(\bm Y^{\obs})
\right)
\ge \alpha-\delta_L^{\mathrm{partial}},
\]
where $\delta_L^{\mathrm{partial}}$ is defined in~\eqref{eq:recovery-finite-L-partial-gammaL}.
\end{theorem}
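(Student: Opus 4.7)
The proof plan is to mirror the argument behind Theorem~\ref{thm:confidence}, replacing the design $\Pi_0\bm X$ of column dimension $p$ by the augmented design $W(\Pi_0)=[\Pi_0\bm X,\bm Z]$ of column dimension $p_1+p_2$, and using Theorem~\ref{thm:recovery partial} in place of Theorem~\ref{thm:recovery}. The key structural observation is that $\Gamma_{\alpha}^{\mathrm{joint}}(\bm Y^{\obs})$ is built as a union of per-$\Pi$ slices indexed by $\Pi\in\mC_{\mathrm{partial}}^{(L)}$; hence on the event $\{\Pi_0\in \mC_{\mathrm{partial}}^{(L)}\}$ the slice at $\Pi=\Pi_0$ is one of the sets entering the union, so
\[
\{(\bm\beta_{1,0},\bm\beta_{2,0})\notin \Gamma_{\alpha}^{\mathrm{joint}}(\bm Y^{\obs})\}\cap\{\Pi_0\in \mC_{\mathrm{partial}}^{(L)}\}
\subseteq
\left\{T(\bm Y^{\obs},(\Pi_0,\bm\beta_{1,0},\bm\beta_{2,0}))>F_{p_1+p_2,n-(p_1+p_2)}^{-1}(\alpha)\right\}.
\]

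Next I would verify that the pivot is exactly $F$-distributed at the truth. Substituting $\bm Y^{\obs}-W(\Pi_0)\binom{\bm\beta_{1,0}}{\bm\beta_{2,0}}=\sigma_0\bm U$ into the statistic in \eqref{eq:Tjointpartial-def} cancels the $\sigma_0^2$ factors and reduces $T(\bm Y^{\obs},(\Pi_0,\bm\beta_{1,0},\bm\beta_{2,0}))$ to
\[
\frac{n-(p_1+p_2)}{p_1+p_2}\cdot\frac{\bm U^\top M_{W(\Pi_0)}\bm U}{\bm U^\top (I_n-M_{W(\Pi_0)})\bm U}.
\]
Under Assumption~\ref{ass:generic-design} applied jointly to $[\bm X,\bm Z]$, the columns of $W(\Pi_0)$ are almost surely linearly independent, so $M_{W(\Pi_0)}$ is an orthogonal projector of rank exactly $p_1+p_2$. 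Cochran's theorem then yields $\bm U^\top M_{W(\Pi_0)}\bm U\sim\chi^2_{p_1+p_2}$ and $\bm U^\top(I_n-M_{W(\Pi_0)})\bm U\sim\chi^2_{n-(p_1+p_2)}$ independently, so the ratio has distribution $F_{p_1+p_2,n-(p_1+p_2)}$ as required.

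Finally, I would combine these two ingredients via a union bound together with the miscoverage guarantee $\bP(\Pi_0\notin \mC_{\mathrm{partial}}^{(L)})\le \delta_L^{\mathrm{partial}}$ from Theorem~\ref{thm:recovery partial}:
\begin{align*}
\bP_{(\mathcal U^L,\bm U)}\left((\bm\beta_{1,0},\bm\beta_{2,0})\notin\Gamma_{\alpha}^{\mathrm{joint}}(\bm Y^{\obs})\right)
&\le \bP_{\bm U}\left(T(\bm Y^{\obs},(\Pi_0,\bm\beta_{1,0},\bm\beta_{2,0}))>F_{p_1+p_2,n-(p_1+p_2)}^{-1}(\alpha)\right)\\
&\quad+\bP_{(\mathcal U^L,\bm U)}\left(\Pi_0\notin \mC_{\mathrm{partial}}^{(L)}\right)\\
&\le (1-\alpha)+\delta_L^{\mathrm{partial}},
\end{align*}
which rearranges to the desired coverage lower bound. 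I do not expect a serious obstacle: everything is a direct translation of the argument for Theorem~\ref{thm:confidence}, and the only genuine technical check is the almost-sure full column rank of $W(\Pi_0)$, which the generic-design assumption handles. If one wanted a purely deterministic design, one would instead impose $\rank([\bm X,\bm Z])=p_1+p_2$ as a primitive condition and the proof would go through verbatim.
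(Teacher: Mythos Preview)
Your proposal is correct and follows essentially the same route as the paper's proof: both exploit the union structure of $\Gamma_{\alpha}^{\mathrm{joint}}$ to reduce non-coverage on $\{\Pi_0\in\mC_{\mathrm{partial}}^{(L)}\}$ to the tail of the $F$-pivot at $(\Pi_0,\bm\beta_{1,0},\bm\beta_{2,0})$, verify that this pivot is exactly $F_{p_1+p_2,n-(p_1+p_2)}$-distributed via the decomposition $\bm Y^{\obs}-W(\Pi_0)\binom{\bm\beta_{1,0}}{\bm\beta_{2,0}}=\sigma_0\bm U$, and then combine with the miscoverage bound of Theorem~\ref{thm:recovery partial} by a union bound.
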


\begin{proof}
Set $p:=p_1+p_2$.
Recall that
\begin{equation}\label{eq:confset-partial-joint}
\Gamma_\alpha^{\mathrm{joint}}(\bm Y^{\obs})
:=
\bigcup_{\Pi \in \mathcal{C}_{\mathrm{partial}}^{(L)}(\bm Y^{\obs})}
\left\{
\bm\beta \in \mathbb R^{p}:
T_{\mathrm{partial}}(\bm Y^{\obs},\Pi,\bm\beta)
\le F^{-1}_{p,n-p}(\alpha)
\right\},
\end{equation}
where the statistic $T(\bm Y^{\obs},(\Pi,\bm\beta_1,\bm\beta_2))$ is defined in~\eqref{eq:Tjointpartial-def}.

On $\{\Pi_0\in\mathcal C_{\mathrm{partial}}^{(L)}(\bm Y^{\obs})\}$, we have
\[
(\bm\beta_{1,0},\bm\beta_{2,0})\notin\Gamma_{\alpha}^{\mathrm{joint}}(\bm Y^{\obs})
\ \Longrightarrow\
T(\bm Y^{\obs},(\Pi_0,\bm\beta_{1,0},\bm\beta_{2,0}))>F^{-1}_{p,n-p}(\alpha).
\]
Hence, by Theorem~\ref{thm:recovery partial}, it holds that
\begin{align*}
\bP\left((\bm\beta_{1,0},\bm\beta_{2,0})\notin\Gamma_{\alpha}^{\mathrm{joint}}(\bm Y^{\obs})\right)
&\le \delta_L^{\mathrm{partial}}
+\bP\left(T(\bm Y^{\obs},(\Pi_0,\bm\beta_{1,0},\bm\beta_{2,0}))>F^{-1}_{p,n-p}(\alpha)\right).
\end{align*}

For the second term, under \eqref{eq:model_partial} we have
\[
\bm Y^{\obs}-\Pi_0\bm X\bm\beta_{1,0}-\bm Z\bm\beta_{2,0}
=\sigma_0\bm U,
\qquad \bm U\sim N(0,I_n).
\]
Define $M_0:=M_{(\Pi_0\bm X,\bm Z)}$ and $P_0:=I_n-M_0$. Under the rank condition
$\rank(\Pi_0\bm X,\bm Z)=p$, $M_0\bm U$ and $P_0\bm U$ are independent Gaussian vectors with
\[
\|M_0\bm U\|_2^2\sim \chi^2_p,
\qquad
\|P_0\bm U\|_2^2\sim \chi^2_{n-p},
\]
and they are independent. Therefore, we have
\[
T(\bm Y^{\obs},(\Pi_0,\bm\beta_{1,0},\bm\beta_{2,0}))
=\frac{\|M_0\bm U\|_2^2/p}{\|P_0\bm U\|_2^2/(n-p)}
\sim F_{p,n-p},
\]
which implies $\bP\left(T(\bm Y^{\obs},(\Pi_0,\bm\beta_{1,0},\bm\beta_{2,0}))\le F^{-1}_{p,n-p}(\alpha)\right)=\alpha$. Combining the bounds yields
\[
\bP\left((\bm\beta_{1,0},\bm\beta_{2,0})\in\Gamma_{\alpha}^{\mathrm{joint}}(\bm Y^{\obs})\right)
\ge \alpha-\delta_L^{\mathrm{partial}}.
\]
\end{proof}

\begin{theorem}[Marginal confidence set for $\bm{\beta}_1$]\label{thm:confidence set partial beta1}
Fix any $\alpha \in (0,1)$. Consider the partially permuted design~\eqref{eq:model_partial} 
defined in~\eqref{eq:confset-partial-beta1}. It holds that, for any $L \ge 2$,
\[
    \bP_{(\mathcal{U}^L, \bm{U})}\left( \bm{\beta}_1 \in \Gamma_{\alpha}^{\mathrm{partial}}(\bm{Y}^{\obs}) \right)
    \ge
    \alpha - \delta_L^{\mathrm{partial}},
\]
where $\delta_L^{\mathrm{partial}}$ is defined in~\eqref{eq:recovery-finite-L-partial-gammaL}.
\end{theorem}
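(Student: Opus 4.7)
The plan is to mirror the union-bound strategy used in the proofs of Theorems~\ref{thm:confidence} and~\ref{thm:confidence-set-partial-joint}, replacing the joint $F$-statistic with the partialled-out version \eqref{eq:Tpartial-def} in order to profile out the nuisance $\bm\beta_2$. The first step is to decompose the miscoverage event according to whether $\Pi_0$ is captured by the candidate set: on $\{\Pi_0\in\mathcal C_{\mathrm{partial}}^{(L)}\}$, the union form of $\Gamma_\alpha^{\mathrm{partial}}$ gives the inclusion
\[
\{\bm\beta_{1,0}\notin\Gamma_\alpha^{\mathrm{partial}}(\bm Y^{\obs})\}\cap\{\Pi_0\in\mathcal C_{\mathrm{partial}}^{(L)}\}
\subseteq
\left\{T(\bm Y^{\obs},(\Pi_0,\bm\beta_{1,0})\mid \bm Z)>F^{-1}_{p_1,n-(p_1+p_2)}(\alpha)\right\},
\]
while the complementary event is controlled by Theorem~\ref{thm:recovery partial} with the bound $\delta_L^{\mathrm{partial}}$.

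The second step is the distributional calculation. I would substitute $\bm Y^{\obs}-\Pi_0\bm X\bm\beta_{1,0}=\bm Z\bm\beta_{2,0}+\sigma_0\bm U$ into \eqref{eq:Tpartial-def}. Since $\range(\bm Z)\subseteq\range(W(\Pi_0))$, both $(M_{W(\Pi_0)}-M_{\bm Z})\bm Z=0$ and $(I_n-M_{W(\Pi_0)})\bm Z=0$, so the $\bm\beta_{2,0}$ contribution cancels in numerator and denominator. Using the identity $M_{W(\Pi_0)}-M_{\bm Z}=M_{R_{\bm Z}\Pi_0\bm X}$ from \eqref{eq:block-proj} and the full-column-rank condition on $(\Pi_0\bm X,\bm Z)$, the projector $M_{W(\Pi_0)}-M_{\bm Z}$ has rank $p_1$, $I_n-M_{W(\Pi_0)}$ has rank $n-(p_1+p_2)$, and their ranges are orthogonal. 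Hence $\bm U^\top(M_{W(\Pi_0)}-M_{\bm Z})\bm U$ and $\bm U^\top(I_n-M_{W(\Pi_0)})\bm U$ are independent chi-squared variables with the stated degrees of freedom, and their scaled ratio is $F_{p_1,n-(p_1+p_2)}$-distributed. This gives
\[
\bP\left(T(\bm Y^{\obs},(\Pi_0,\bm\beta_{1,0})\mid \bm Z)>F^{-1}_{p_1,n-(p_1+p_2)}(\alpha)\right)=1-\alpha.
\]

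The third step is a one-line union bound combining these two pieces: $\bP(\bm\beta_{1,0}\notin\Gamma_\alpha^{\mathrm{partial}})\le (1-\alpha)+\delta_L^{\mathrm{partial}}$, which rearranges to the claimed coverage lower bound.

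The main technical obstacle is really just the algebraic verification that the partialled-out statistic reduces to an $F_{p_1,n-(p_1+p_2)}$ variable, i.e., confirming orthogonality of $M_{W(\Pi_0)}-M_{\bm Z}$ and $I_n-M_{W(\Pi_0)}$ and the associated rank computation; this is essentially the same decomposition already used in \eqref{eq:block-proj} and is purely linear-algebraic rather than probabilistic. The probabilistic core of the result is entirely carried by Theorem~\ref{thm:recovery partial}, so the proof is short and parallels Theorem~\ref{thm:confidence-set-partial-joint} with $(p_1+p_2)$ replaced by $p_1$ in the numerator degrees of freedom.
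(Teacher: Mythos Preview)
Your proposal is correct and follows essentially the same union-bound strategy as the paper's proof: split on $\{\Pi_0\in\mathcal C_{\mathrm{partial}}^{(L)}\}$, invoke Theorem~\ref{thm:recovery partial} for the localization error, and use the $F_{p_1,n-(p_1+p_2)}$ law of the partialled-out statistic at the true parameters. The paper's own proof simply asserts this $F$ distribution without verification, whereas you supply the linear-algebraic details (cancellation of $\bm\beta_{2,0}$ via $(M_{W(\Pi_0)}-M_{\bm Z})\bm Z=0$ and $(I_n-M_{W(\Pi_0)})\bm Z=0$, rank and orthogonality of the two projectors); these are exactly the right checks and match the decomposition \eqref{eq:block-proj}.
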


\begin{proof}
By definition, we have
\begin{equation}\label{eq:confset-partial-beta1}
\Gamma_{\alpha}^{\mathrm{partial}}(\bm{Y}^{\obs})
:=\bigcup_{\Pi \in \mathcal{C}_{\mathrm{partial}}^{(L)}} \left\{
\bm\beta\in \R^{p_1}:\ 
T(\bm{Y}^{\obs}, (\Pi,\bm\beta) \mid \bm Z) \le F_{p_1, n-p}^{-1}(\alpha)
\right\}.
\end{equation}

Hence, we have
\[
\bP\left( \bm{\beta}_1 \in \Gamma_{\alpha}^{\mathrm{partial}}(\bm{Y}^{\obs}) \right)
\ge
\bP\left(
T\left(\bm{Y}^{\obs}, (\Pi_0, \bm{\beta}_1) \mid \bm{Z}\right) \le F_{p_1, n-p}^{-1}(\alpha)
\right)
-
\bP\left( \Pi_0 \notin \mathcal{C}_{\mathrm{partial}}^{(L)} \right).
\]
Under the true parameters $(\Pi_0, \bm{\beta}_1)$, the statistic
$T(\bm{Y}^{\obs}, (\Pi_0, \bm{\beta}_1)\mid \bm{Z})$ has an $F_{p_1, n-p}$ distribution, so the first probability equals $\alpha$.
Theorem~\ref{thm:recovery partial} bounds the second probability by $\delta_L^{\mathrm{partial}}$.
\end{proof}

\subsection{Theory on permuted ridge regression}\label{subsec:ridge_theory}

Recall the augmented design matrix $\bm X_\lambda:=\begin{pmatrix}\bm X\\ \sqrt{\lambda}I_p\end{pmatrix}\in\R^{(n+p)\times p}$ and, for any $\Pi\in\mP_{n,k}$, the embedded permutation $\tilde\Pi:=\mathrm{diag}(\Pi,I_p)$ and the corresponding augmented design
\[
\bm W_\lambda(\Pi):=\tilde\Pi\bm X_\lambda=\begin{pmatrix}\Pi\bm X\\ \sqrt{\lambda}I_p\end{pmatrix}.
\]
Let $P_{\Pi,\lambda}:=I_{n+p}-M_{\bm W_\lambda(\Pi)}$ be the residual projector.  Note that $\mathrm{rank}(P_{\Pi,\lambda})=n$.
For the augmented Gaussian perturbation $\tilde{\bm U}^*:=(\bm U^*,\bm 0_p)\in\R^{n+p}$ with $\bm U^*\sim\Normal(0,I_n)$,
we have $\tilde{\bm U}^*\sim\Normal(0,D)$ where $D:=\mathrm{diag}(I_n,0_p)$.

\begin{lemma}\label{lem:ridge_nondegenerate}
For any $\lambda>0$, define
\[
c_\lambda := \frac{\lambda}{\lambda+\|\bm X\|_{\rm op}^2}\in(0,1).
\]
Fix $\Pi\in\mP_{n,k}$ and define $Z_\Pi:=P_{\Pi,\lambda}\tilde{\bm U}^*$.
Let $\Sigma_\Pi$ denote the covariance of $Z_\Pi$ restricted to $\mathrm{range}(P_{\Pi,\lambda})$.
Then, it holds that
\[
c_\lambda I_n \preceq \Sigma_\Pi \preceq I_n.
\]
\end{lemma}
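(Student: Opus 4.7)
The plan is to unpack the definition of the restricted covariance and then exploit the explicit block structure of the augmented design $\bm W_\lambda(\Pi)$. Writing $V\in\R^{(n+p)\times n}$ for any matrix whose columns form an orthonormal basis of $\mathrm{range}(P_{\Pi,\lambda})$, we have $P_{\Pi,\lambda}=VV^\top$ and, since $\mathrm{Cov}(\tilde{\bm U}^*)=D$,
\[
\Sigma_\Pi \;=\; V^\top P_{\Pi,\lambda}\, D\, P_{\Pi,\lambda} V \;=\; V^\top D\, V.
\]
The upper bound $\Sigma_\Pi\preceq I_n$ is immediate: since $D\preceq I_{n+p}$, we get $V^\top D V \preceq V^\top V = I_n$.

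For the lower bound, I would argue pointwise on vectors in the range. Let $z=(z_1,z_2)\in\R^n\times\R^p$ be an arbitrary element of $\mathrm{range}(P_{\Pi,\lambda})$; since $\Sigma_\Pi\succeq c_\lambda I_n$ is equivalent to $z^\top D z \ge c_\lambda \|z\|_2^2$ for every such $z$, the task reduces to bounding $\|z_1\|_2^2$ from below in terms of $\|z\|_2^2=\|z_1\|_2^2+\|z_2\|_2^2$. The key step is to characterise the range of $P_{\Pi,\lambda}$ as the orthogonal complement of $\mathrm{range}(\bm W_\lambda(\Pi))$. Orthogonality against $\bm W_\lambda(\Pi)\bm b=(\Pi\bm X\bm b,\sqrt{\lambda}\bm b)$ for all $\bm b\in\R^p$ forces
\[
(\Pi\bm X)^\top z_1 + \sqrt{\lambda}\, z_2 \;=\; 0,
\qquad\text{i.e.,}\qquad
z_2 \;=\; -\tfrac{1}{\sqrt{\lambda}}\,\bm X^\top \Pi^\top z_1.
\]

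Using $\Pi^\top\Pi=I_n$ and the operator-norm bound on $\bm X$ gives $\|z_2\|_2^2 \le (\|\bm X\|_{\rm op}^2/\lambda)\|z_1\|_2^2$, so
\[
\|z\|_2^2 \;=\; \|z_1\|_2^2 + \|z_2\|_2^2 \;\le\; \Bigl(1+\tfrac{\|\bm X\|_{\rm op}^2}{\lambda}\Bigr)\|z_1\|_2^2 \;=\; \tfrac{1}{c_\lambda}\|z_1\|_2^2,
\]
which rearranges to $z^\top D z = \|z_1\|_2^2 \ge c_\lambda \|z\|_2^2$, giving $\Sigma_\Pi\succeq c_\lambda I_n$.

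I do not anticipate a serious obstacle here; the only care required is (i) correctly interpreting the restriction of $\Sigma_\Pi$ to the $n$-dimensional subspace $\mathrm{range}(P_{\Pi,\lambda})$ via an isometry, and (ii) using the orthogonality of $\Pi$ so that the Jacobian bound reduces to $\|\bm X\|_{\rm op}$ rather than a permutation-dependent quantity (so that the lower bound holds uniformly over $\Pi\in\mP_{n,k}$).
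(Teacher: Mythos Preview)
Your proposal is correct and follows essentially the same approach as the paper: both identify the restricted covariance via the quadratic form $v^\top D v$ for $v=(a,b)$ in $\mathrm{range}(P_{\Pi,\lambda})$, use the orthogonality condition $(\Pi\bm X)^\top a+\sqrt{\lambda}b=0$ to express $b$ in terms of $a$, and then bound $\|b\|_2$ by $\|\bm X\|_{\rm op}\|a\|_2/\sqrt{\lambda}$ to obtain the lower bound $c_\lambda$, with the upper bound being immediate from $D\preceq I_{n+p}$. Your explicit introduction of the isometry $V$ is a clean way to formalize the restriction that the paper handles implicitly via unit vectors in the range.
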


\begin{proof}
Set $P:=P_{\Pi,\lambda}$ and write $v=(a,b)\in\R^{n}\times\R^{p}$.
For any $v\in\mathrm{range}(P)$ we have $Pv=v$ and hence it holds that
\[
\mathrm{Var}(v^\top Z_\Pi)=v^\top P D P v = v^\top D v=\|a\|_2^2.
\]
It therefore suffices to lower and upper bound $\|a\|_2^2$ over unit vectors $v\in\mathrm{range}(P)$.

Let $v\in\mathrm{range}(P)$ with $\|v\|_2=1$.
Since $\mathrm{range}(P)=\mathrm{col}(\bm W_\lambda(\Pi))^\perp$, we have $\bm W_\lambda(\Pi)^\top v=0$, i.e.,
$\bm X^\top\Pi^\top a+\sqrt{\lambda}b=0$ and thus $b=-(1/\sqrt{\lambda})\bm X^\top\Pi^\top a$.
Consequently, we have
\[
1=\|v\|_2^2=\|a\|_2^2+\|b\|_2^2
=\|a\|_2^2+\frac{1}{\lambda}\|\bm X^\top\Pi^\top a\|_2^2
\le \|a\|_2^2\left(1+\frac{\|\bm X\|_{\rm op}^2}{\lambda}\right),
\]
which yields $\|a\|_2^2\ge \lambda/(\lambda+\|\bm X\|_{\rm op}^2)=c_\lambda$.
The upper bound $\|a\|_2^2\le 1$ is immediate from $\|v\|_2=1$.
Because of $\mathrm{Var}(v^\top Z_\Pi)=\|a\|_2^2$ for all unit $v\in\mathrm{range}(P)$, the claim follows.
\end{proof}

\begin{lemma}\label{lem:max_sim_ridge}
Fix $\Pi_0\in\mP_{n,k}$ and let $\tilde\Pi_0=\mathrm{diag}(\Pi_0,I_p)$.
For $\Pi\in\mP_{n,k}$ define $P_{\Pi,\lambda}:=I_{n+p}-M_{\bm W_\lambda(\Pi)}$ and consider $
\phi\left(P_{\Pi,\lambda}\tilde{\bm U}^*,P_{\Pi,\lambda}\tilde\Pi_0\bm X_\lambda\bm\beta_0\right)$.

Then for any $\gamma_1\in(0,1)$, it holds that
\[
\bP\left(
\max_{\Pi\in\mP_{n,k}}
\phi\left(P_{\Pi,\lambda}\tilde{\bm U}^*,P_{\Pi,\lambda}\tilde\Pi_0\bm X_\lambda\bm\beta_0\right)
\ge \gamma_1^2
\right)
\le
2|\mP_{n,k}|c_\lambda^{-n/2}\left(\arccos(\gamma_1)\right)^{n-1}.
\]
\end{lemma}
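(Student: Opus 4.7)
The plan is to adapt the rotational-symmetry argument of Lemma \ref{lem:max-sim} to the degenerate Gaussian setting via a simple Radon--Nikodym comparison, using Lemma \ref{lem:ridge_nondegenerate} to control the loss of isotropy quantitatively. First, I would fix $\Pi\in\mP_{n,k}$, assume $w_\Pi:=P_{\Pi,\lambda}\tilde\Pi_0\bm X_\lambda\bm\beta_0$ is nonzero (the event is vacuous otherwise), and pick an orthonormal basis $Q_\Pi\in\R^{(n+p)\times n}$ of $\range(P_{\Pi,\lambda})$ so that $Q_\Pi^\top Q_\Pi=I_n$ and $Q_\Pi Q_\Pi^\top=P_{\Pi,\lambda}$. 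Writing $\xi_\Pi:=Q_\Pi^\top\tilde{\bm U}^*\in\R^n$, $e_\Pi:=w_\Pi/\|w_\Pi\|_2=Q_\Pi\eta_\Pi$ with $\eta_\Pi\in\mathbb S^{n-1}$, and $V_\Pi:=\xi_\Pi/\|\xi_\Pi\|_2$, the identity
\[
\phi(P_{\Pi,\lambda}\tilde{\bm U}^*,w_\Pi)=\frac{(\xi_\Pi^\top\eta_\Pi)^2}{\|\xi_\Pi\|_2^2}=(V_\Pi^\top\eta_\Pi)^2
\]
reduces the problem to a cap probability for $V_\Pi$ on $\mathbb S^{n-1}$. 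The crucial structural input is that $\xi_\Pi\sim\Normal(0,\Sigma_\Pi)$ with $\Sigma_\Pi:=Q_\Pi^\top D Q_\Pi$, which by Lemma \ref{lem:ridge_nondegenerate} satisfies $c_\lambda I_n\preceq \Sigma_\Pi\preceq I_n$.

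Second, I would compute the density of $V_\Pi$ on $\mathbb S^{n-1}$ relative to the uniform probability measure $\mathcal L_{\mathrm{unif}}$. A standard polar-coordinates integration of the Gaussian density of $\xi_\Pi$ yields
\[
\frac{d\mathcal L(V_\Pi)}{d\mathcal L_{\mathrm{unif}}}(v)=\frac{1}{(\det \Sigma_\Pi)^{1/2}\,(v^\top\Sigma_\Pi^{-1}v)^{n/2}}.
\]
Using $\det\Sigma_\Pi\ge c_\lambda^n$ and $v^\top\Sigma_\Pi^{-1}v\ge 1$ on the unit sphere (both immediate from $c_\lambda I_n\preceq\Sigma_\Pi\preceq I_n$), this Radon--Nikodym derivative is uniformly bounded above by $c_\lambda^{-n/2}$. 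Hence for every Borel $A\subset\mathbb S^{n-1}$ we have the comparison $\bP(V_\Pi\in A)\le c_\lambda^{-n/2}\mathcal L_{\mathrm{unif}}(A)$.

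Third, I would specialize to $A=\{v\in\mathbb S^{n-1}:|v^\top\eta_\Pi|\ge\gamma_1\}$, which is a symmetric pair of spherical caps of half-angle $\arccos(\gamma_1)$. Invoking the uniform cap bound derived inside the proof of Lemma \ref{lem:max-sim} with sphere dimension $r-1=n-1$ (noting that $\rank(P_{\Pi,\lambda})=n$ here, which is what changes the exponent from $n-p-1$ to $n-1$) gives $\mathcal L_{\mathrm{unif}}(A)\le \tfrac{2}{\pi}(\arccos\gamma_1)^{n-1}\le 2(\arccos\gamma_1)^{n-1}$. A union bound over $\Pi\in\mP_{n,k}$ then delivers the stated inequality.

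The main obstacle is the degeneracy of $\tilde{\bm U}^*\sim\Normal(0,D)$: the projected noise $P_{\Pi,\lambda}\tilde{\bm U}^*$ is no longer spherically symmetric on $\range(P_{\Pi,\lambda})$, so the direct sphere-argument of Lemma \ref{lem:max-sim} fails. Lemma \ref{lem:ridge_nondegenerate} resolves this by showing that the covariance is pinched between $c_\lambda I_n$ and $I_n$, and this is exactly what makes the density ratio on $\mathbb S^{n-1}$ bounded by $c_\lambda^{-n/2}$; the remaining work is routine and closely parallels Lemma \ref{lem:max-sim}.
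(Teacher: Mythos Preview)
Your proposal is correct and follows essentially the same route as the paper: reduce to coordinates on $\range(P_{\Pi,\lambda})$ via an orthonormal basis, invoke Lemma~\ref{lem:ridge_nondegenerate} to get $c_\lambda I_n\preceq\Sigma_\Pi\preceq I_n$, bound the density of the direction $V_\Pi$ on $\mathbb S^{n-1}$ by $c_\lambda^{-n/2}$, apply the cap bound from Lemma~\ref{lem:max-sim}, and finish with a union bound. The only cosmetic difference is that you write the angular density in closed form as $(\det\Sigma_\Pi)^{-1/2}(v^\top\Sigma_\Pi^{-1}v)^{-n/2}$ and bound the determinant directly, whereas the paper writes it in self-normalizing form and bounds the normalizing integral; both yield the same $c_\lambda^{-n/2}$ bound.
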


\begin{proof}
Fix $\Pi\in\mP_{n,k}$ and abbreviate $P:=P_{\Pi,\lambda}$.
If $P\tilde\Pi_0\bm X_\lambda \bm\beta_0=0$, then the corresponding $\phi(\cdot,\cdot)$ is zero and can be ignored.
Otherwise, set $w:=P\tilde\Pi_0\bm X_\lambda\bm\beta_0\neq 0$.

Let $Q\in\R^{(n+p)\times n}$ be any matrix with orthonormal columns spanning $\mathrm{range}(P)$.
Then, we have $Q^\top P Q=I_n$, $Q^\top w\neq 0$, and $\bar Z:=Q^\top P\tilde{\bm U}^*\sim\Normal(0,\Sigma)$ with
$\Sigma:=Q^\top D Q$.
Moreover, since $Q$ is an isometry on $\mathrm{range}(P)$, we have
\[
\phi(P\tilde{\bm U}^*,w)=\phi(\bar Z,\bar w),\qquad \bar w:=Q^\top w\in\R^{n}.
\]
By Lemma~\ref{lem:ridge_nondegenerate}, $\Sigma$ satisfies $c_\lambda I_n\preceq \Sigma\preceq I_n$ and hence
$\Sigma^{-1}$ has eigenvalues in $[1,1/c_\lambda]$.

Define $V:=\bar Z/\|\bar Z\|_2\in\mathbb{S}^{n-1}$ and $\bar e:=\bar w/\|\bar w\|_2$.
Then, it holds that $\phi(\bar Z,\bar w)=(V^\top \bar e)^2$, so we have
\[
\bP\left(\phi(P\tilde{\bm U}^*,w)\ge \gamma_1^2\right)=\bP\left(|V^\top \bar e|\ge \gamma_1\right).
\]
For $\bar Z\sim\Normal(0,\Sigma)$, the direction $V$ admits the density
\[
f(v)=\frac{(v^\top\Sigma^{-1}v)^{-n/2}}{\int_{\mathbb{S}^{n-1}}(u^\top\Sigma^{-1}u)^{-n/2}d\mu(u)}
\qquad (v\in\mathbb{S}^{n-1})
\]
with respect to the uniform surface measure $\mu$ on $\mathbb{S}^{n-1}$.
Since $(v^\top\Sigma^{-1}v)^{-n/2}\in[c_\lambda^{n/2},1]$ for all $v$, the normalizing constant is at least $c_\lambda^{n/2}$,
and hence $f(v)\le c_\lambda^{-n/2}$ for all $v$. Therefore, for any measurable $A\subseteq\mathbb{S}^{n-1}$, we have
$\bP(V\in A)\le c_\lambda^{-n/2}\mu(A)$.

Taking $A=\{v:|v^\top \bar e|\ge \gamma_1\}$ and using the bound
$\mu(A)\le 2(\arccos(\gamma_1))^{n-1}$ yields
\[
\bP\left(\phi(P\tilde{\bm U}^*,w)\ge \gamma_1^2\right)
\le 2c_\lambda^{-n/2}\left(\arccos(\gamma_1)\right)^{n-1}.
\]
A union bound over $\Pi\in\mP_{n,k}$ completes the proof.
\end{proof}

We next state the recovery guarantee for the ridge-augmented candidate set.
Define 
\begin{equation}\label{eq:c-min-ridge}
C_{\min}^{\mathrm{ridge}}(\Pi_0)
:=
\min_{\substack{\Pi \in \mP_{n, k},\\ \Pi \neq \Pi_0}}
\frac{\left\|(I_{n+p}-M_{\bm W_\lambda(\Pi)})\bm W_\lambda(\Pi_0)\bm\beta_0\right\|_2^2}
{n \max (d(\Pi_0, I_n) - d(\Pi, I_n), 1)}.
\end{equation}

\begin{theorem}\label{thm:candidate_set_ridge}
Assume $C_{\min}^{\mathrm{ridge}}(\Pi_0)>0$ holds and fix $\underline c\in(0,1)$.
Suppose $\lambda\ge \lambda_{\min}:=\|\bm X\|_{\rm op}^2\underline c/(1-\underline c)$ holds, so that $c_\lambda\ge \underline c$.
Then the conclusion of Theorem~\ref{thm:recovery} continues to hold for the confidence set $\mC_\lambda^{(L)}$,
with $C_{\min}(\Pi_0)$ replaced by $C_{\min}^{\mathrm{ridge}}(\Pi_0)$ and with the last term of $\Delta(\gamma)$
(see \eqref{eq:Delta-gamma}) similarly replaced by
\[
2^{(n+1)/2}|\mP_{n,k}|\underline c^{-n/2}\left(\gamma\log(e/\gamma)\right)^{(n-1)/2}.
\]
\end{theorem}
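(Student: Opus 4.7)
The plan is to replicate the argument of Theorem~\ref{thm:recovery} in the augmented space $\R^{n+p}$, substituting each Gaussian-in-$\R^n$ ingredient by its augmented counterpart. The ridge formulation recasts the problem as shuffled regression with response $\bm Y^{\mathrm{aug}}\in\R^{n+p}$, design $\tilde\Pi\bm X_\lambda$, and noise $\sigma_0\tilde{\bm U}=\sigma_0(\bm U,\bm 0_p)$. Because the last $p$ coordinates of both $\tilde{\bm U}_\ell^*$ and $\tilde{\bm U}$ are deterministic zeros, the cosine similarity satisfies $\phi(\tilde{\bm U}_\ell^*,\tilde{\bm U})=\phi(\bm U_\ell^*,\bm U)$, so Lemma~\ref{lem:similarity measure} applies verbatim to control the alignment events $A_\ell=\{\phi(\bm U_\ell^*,\bm U)>1-\gamma^2\}$, and the censored stopping-time decomposition of Theorem~\ref{thm:recovery} transfers without modification.

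Next, I would derive the ridge analogue of Lemma~\ref{lem:F-union} for the augmented objective $F_\lambda(\Pi,\bm U^*):=\|(I_{n+p}-M_{(\bm W_\lambda(\Pi),\tilde{\bm U}^*)})\bm Y^{\mathrm{aug}}\|_2^2$. Using the projector decomposition $M_{(\bm W_\lambda(\Pi),\tilde{\bm U}^*)}=M_{\bm W_\lambda(\Pi)}+M_{(I_{n+p}-M_{\bm W_\lambda(\Pi)})\tilde{\bm U}^*}$ and setting $\bm w_\Pi^{\mathrm{ridge}}:=(I_{n+p}-M_{\bm W_\lambda(\Pi)})\tilde\Pi_0\bm X_\lambda\bm\beta_0$, on the angular event controlled by Lemma~\ref{lem:max_sim_ridge} I obtain $\|(I_{n+p}-M_{(\bm W_\lambda(\Pi),\tilde{\bm U}^*)})\tilde\Pi_0\bm X_\lambda\bm\beta_0\|_2^2\ge(1-\gamma_1^2)\|\bm w_\Pi^{\mathrm{ridge}}\|_2^2\ge(1-\gamma_1^2)nC_{\min}^{\mathrm{ridge}}(\Pi_0)$ via definition~\eqref{eq:c-min-ridge}. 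The cross and quadratic terms in $\tilde{\bm U}$ are treated exactly as in the original proof, using $\|\tilde{\bm U}\|_2^2=\|\bm U\|_2^2\sim\chi_n^2$ and the subspace inclusion $\mathrm{range}(I_{n+p}-M_{(\bm W_\lambda(\Pi),\tilde{\bm U}^*)})\subseteq\mathrm{range}(I_{n+p}-M_{\tilde{\bm U}^*})$; the resulting $\chi^2$-tail bounds retain the original form but feature $C_{\min}^{\mathrm{ridge}}(\Pi_0)$, which accounts for the substitution in the two exponential terms of $\Delta(\gamma)$.

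For the angular event itself, I would invoke Lemma~\ref{lem:max_sim_ridge} in place of Lemma~\ref{lem:max-sim}. Under $c_\lambda\ge\underline c$, this replaces the spherical-cap bound $\tfrac{2}{\pi}|\mP_{n,k}|(\arccos\gamma_1)^{n-p-1}$ by $2|\mP_{n,k}|\underline c^{-n/2}(\arccos\gamma_1)^{n-1}$: the exponent jumps from $n-p-1$ to $n-1$ because the augmented residual projector has rank $n$, while the prefactor $\underline c^{-n/2}$ absorbs the anisotropy of the projected Gaussian direction. Substituting $\gamma_1=\sqrt{1-\gamma\log(e/\gamma)}$ and bounding $\arccos\gamma_1$ by an elementary inequality in terms of $\sqrt{1-\gamma_1^2}=\sqrt{\gamma\log(e/\gamma)}$ yields the stated last term $2^{(n+1)/2}|\mP_{n,k}|\underline c^{-n/2}(\gamma\log(e/\gamma))^{(n-1)/2}$. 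Choosing $\gamma=\gamma_L$ as in~\eqref{eq:gamma-L} and combining via the union bound then reproduces the polynomial-rate claim, with the sole modification being the rank adjustment $n-p\mapsto n$ in the exponent of the leading term.

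The main obstacle is the degeneracy of the augmented perturbation $\tilde{\bm U}^*$, whose covariance $D=\mathrm{diag}(I_n,0_p)$ is singular, so that $P_{\Pi,\lambda}\tilde{\bm U}^*$ is no longer spherically symmetric on $\mathrm{range}(P_{\Pi,\lambda})$. Without a lower bound on $c_\lambda$, the density of its direction could concentrate on a lower-dimensional face and the cap-volume argument would fail with no uniform polynomial constant. The condition $c_\lambda\ge\underline c$, together with the density control supplied by Lemma~\ref{lem:ridge_nondegenerate}, resolves this at the cost of the multiplicative factor $\underline c^{-n/2}$; beyond this point the remainder of the Theorem~\ref{thm:recovery} pipeline transfers mechanically.
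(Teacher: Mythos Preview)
Your proposal is correct and follows essentially the same route as the paper's own proof: both transfer the Theorem~\ref{thm:recovery} machinery to the augmented space, replace Lemma~\ref{lem:max-sim} by Lemma~\ref{lem:max_sim_ridge} (incurring the factor $\underline c^{-n/2}$ and the rank change $n-p\to n$), and observe that the $\chi^2$ tail bounds on the noise carry over because $\|\tilde{\bm U}\|_2=\|\bm U\|_2$. Your explicit remark that $\phi(\tilde{\bm U}_\ell^*,\tilde{\bm U})=\phi(\bm U_\ell^*,\bm U)$, which makes Lemma~\ref{lem:similarity measure} and the stopping-time decomposition transfer verbatim, is a point the paper leaves implicit.
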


\begin{proof}
Fix $\gamma\in(0,1/4]$.
Recall the augmented perturbation $\tilde{\bm U}^*:=(\bm U^*,\bm 0_p)\sim\mathcal N(0,D)$ with
$D=\mathrm{diag}(I_n,0_p)\preceq I_{n+p}$, and the augmented designs
$\bm W_\lambda(\Pi)=\tilde\Pi\bm X_\lambda$ with residual projectors
$P_{\Pi,\lambda}:=I_{n+p}-M_{\bm W_\lambda(\Pi)}$ (so $\rank(P_{\Pi,\lambda})=n$).

We follow the proof of Lemma~\ref{lem:inconsistency} with $\bm W(\Pi)$ replaced by $\bm W_\lambda(\Pi)$,
$\bm U^*$ replaced by $\tilde{\bm U}^*$, and with the separation constant
$C_{\min}(\Pi_0)$ replaced by $C_{\min}^{\rm ridge}(\Pi_0)$.
Indeed, for any orthogonal projector $P$ of rank $n$ and any $x\ge 1$, write
$\tilde{\bm U}^*=D^{1/2}G$ with $G\sim\mathcal N(0,I_{n+p})$. Then, we have
\[
\tilde{\bm U}^{*\top}P\tilde{\bm U}^*
=G^\top\left(D^{1/2}PD^{1/2}\right)G,
\qquad
0\preceq D^{1/2}PD^{1/2}\preceq P,
\]
so all eigenvalues of $D^{1/2}PD^{1/2}$ lie in $[0,1]$. Hence
$\tilde{\bm U}^{*\top}P\tilde{\bm U}^*$ is stochastically dominated by $\chi_n^2$, and therefore we have
\[
\bP\left(\tilde{\bm U}^{*\top}P\tilde{\bm U}^*\ge nx\right)
\le \bP(\chi_n^2\ge nx)
\le \exp\left(-\frac n2\Psi(x)\right),
\]
which is exactly the tail bound used in Lemma~\ref{lem:inconsistency}.
Likewise, for any fixed $v$, $v^\top \tilde{\bm U}^*$ is Gaussian with variance $v^\top D v\le \|v\|_2^2$,
so the corresponding Gaussian tail bounds are also unchanged.

The only place in Lemma~\ref{lem:inconsistency} where spherical symmetry of projected perturbations is used
is in controlling the following event
\[
\max_{\Pi\in\mP_{n,k}}
\phi\left(P_{\Pi,\lambda}\tilde{\bm U}^*,P_{\Pi,\lambda}\tilde\Pi_0\bm X_\lambda\bm\beta_0\right).
\]
Here Lemma~\ref{lem:max-sim} is replaced by Lemma~\ref{lem:max_sim_ridge}, which yields the modified last
term $2^{(n+1)/2}|\mP_{n,k}|\underline c^{-n/2}(\gamma\log(e/\gamma))^{(n-1)/2}$.

Combining these bounds with the union bound over $L$ i.i.d. draws
$\tilde{\bm U}_1^*,\dots,\tilde{\bm U}_L^*$ yields the desired claim.
\end{proof}

\section{Additional Figures}\label{Appendix:figures}
In addition to Section \ref{sec:numerical}, we report the Monte Carlo averages of the coverage probability of the candidate set (Figure \ref{fig:app-mc-coverage}) and the volume of the coefficient confidence sets (Figure \ref{fig:app-coef-volume}).

\begin{figure}[t]
\centering
\includegraphics[width=0.95\linewidth]{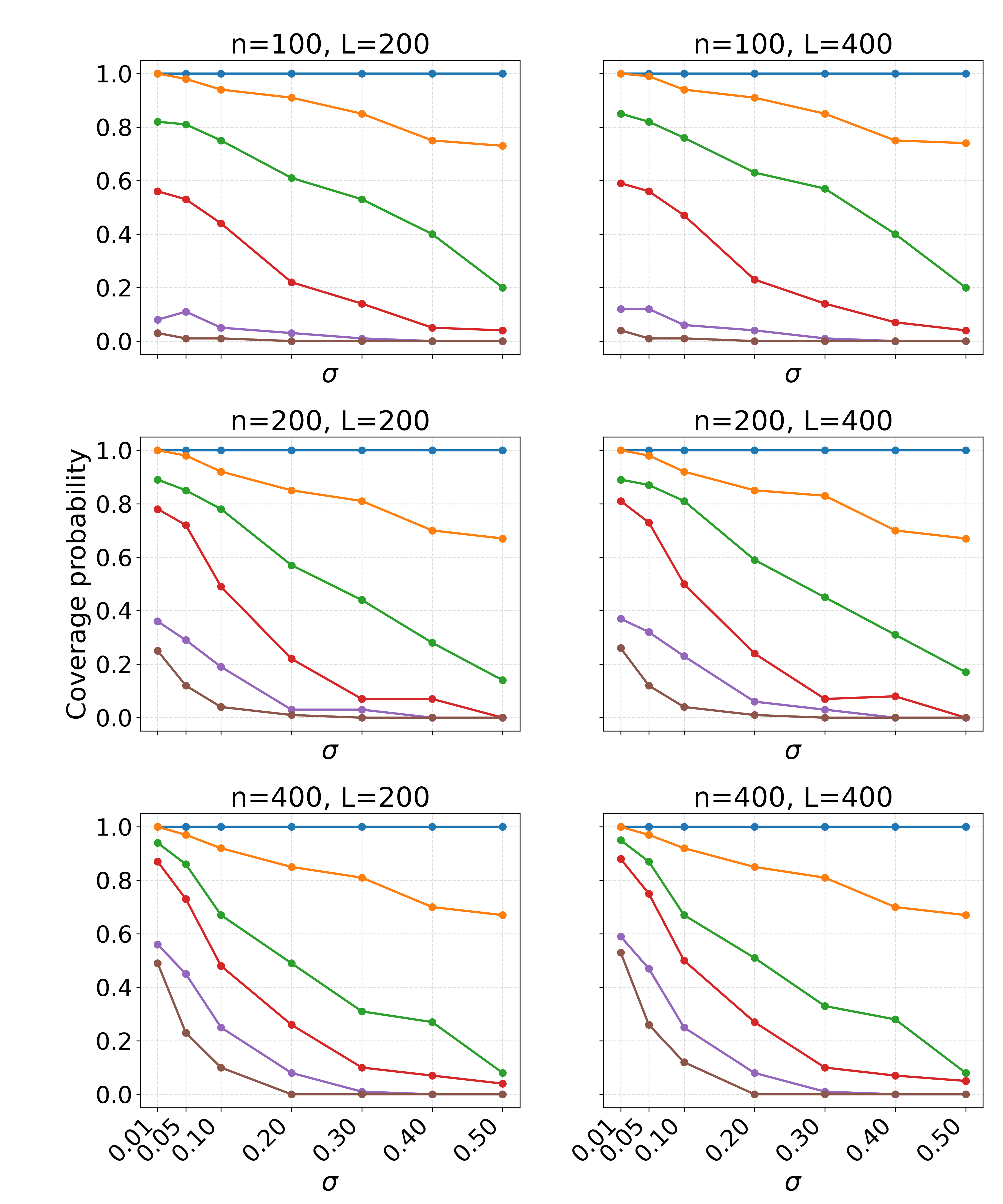}
\caption{Monte Carlo averages of the coverage probability of the candidate set.}
\label{fig:app-mc-coverage}
\end{figure}

\begin{figure}[t]
\centering
\includegraphics[width=0.95\linewidth]{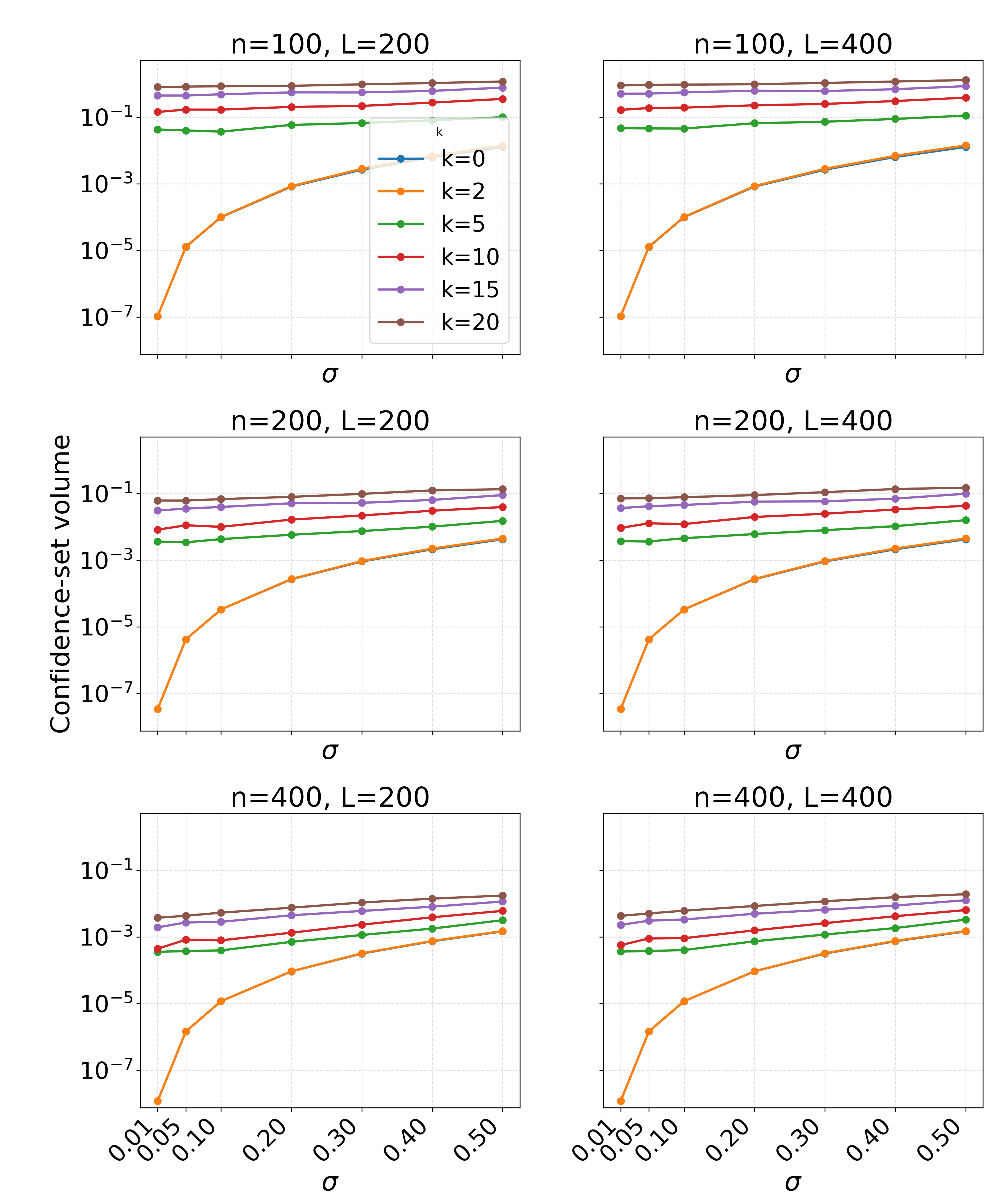}
\caption{Monte Carlo averages of the volume of the confidence sets for the coefficient vector.}
\label{fig:app-coef-volume}
\end{figure}

\bibliographystyle{alpha}
\bibliography{mybib}

\end{document}